\numberwithin{equation}{section}
\theoremstyle{plain}
\newtheorem{theorem}{Theorem}[section]
\newtheorem*{theorem*}{Theorem}
\newtheorem{lemma}[theorem]{Lemma}
\newtheorem{claim}[theorem]{Claim}
\newtheorem{proposition}[theorem]{Proposition}
\newtheorem{corollary}[theorem]{Corollary}
\theoremstyle{remark}
\newtheorem*{definition*}{Definition}
\newtheorem*{question*}{Question}
\newtheorem*{example*}{Example}
\newtheorem*{examples*}{Examples}
\newtheorem{remark}[theorem]{Remark}
\newtheorem*{remark*}{Remark}
\newcommand{\norm}[1]{\left\Vert #1 \right\Vert}
\newcommand{\crochet}[1]{\left\langle #1 \right\rangle}
\renewcommand{\bar}[1]{\overline{#1}}
\newcommand{\Prob}{\mathbb{P}}
\newcommand{\R}{\mathbb{R}}
\newcommand{\E}{\mathbb{E}}
\newcommand{\N}{\mathbb{N}}
\newcommand{\calD}{\mathcal{D}}
\newcommand{\calN}{\mathcal{N}}
\renewcommand{\d}{\mathrm{d}}
\renewcommand{\P}{\Prob}
\renewcommand{\S}{\mathbb{S}}
\newcommand{\sd}{\mathbb{S}^{d-1}}
\newcommand{\ind}[1]{\mathbbm{1}_{\{ #1\}}}
\newcommand{\indset}[1]{\mathbbm{1}_{#1}}
\newcommand{\one}{\mathbbm{1}}
\newcommand{\given}{\;\big|\;}
\newcommand\abs[1]{\left|#1\right|}
\newcommand{\ep}{\epsilon}
\newcommand{\usim}{\mathop{\sim}_{\scriptscriptstyle(\mathrm{u})}}
\newcommand{\tl}{\tilde{t}}
\newcommand{\tll}{\tilde{t}-\ell}
\newcommand\UB[2]{\overline{\cB}_{#1}^{#2}}
\newcommand\LB[2]{\underline{\cB}_{#1}^{#2}}
\newcommand{\x}{\mathbf{x}}
\newcommand{\y}{\mathbf{y}}
\newcommand{\fC}{\mathfrak C}
\newcommand{\fT}{\mathfrak T}
\newcommand{\win}{{\normalfont\texttt{win}}}
\newcommand{\B}{\mathcal{B}^{\Bumpeq}}
\newcommand{\fM}{\mathfrak{M}}
\newcommand{\cB}{\mathcal B}
\newcommand{\fE}{\mathfrak{E}}
\newcommand{\cE}{\mathcal{E}}
\newcommand{\cG}{\mathcal{G}}
\newcommand{\cF}{\mathcal F}
\newcommand{\cN}{\mathcal{N}}
\newcommand{\dd}{\mathrm{d}}
\newcommand{\bfx}{\mathbf{x}}
\begin{document}

\begin{frontmatter}
\title{The extremal point process of branching Brownian~motion~in~\texorpdfstring{$\R^{\text{\lowercase{d}}}$}{Rd}}
\runtitle{The extremal point process of BBM in~{$\R^{\text{\lowercase{d}}}$}}

\begin{aug}
  \author[A]{\fnms{Julien} \snm{Berestycki}\ead[label=e1]{julien.berestycki@stats.ox.ac.uk}},
  \author[B]{\fnms{Yujin H.} \snm{Kim}\ead[label=e2]{yujin.kim@courant.nyu.edu}},
  \author[C]{\fnms{Eyal}  \snm{Lubetzky}\ead[label=e3]{eyal@courant.nyu.edu}},\\
  \author[D]{\fnms{Bastien} \snm{Mallein}\ead[label=e4]{mallein@math.univ-paris13.fr}}
  \and
  \author[E]{\fnms{Ofer} \snm{Zeitouni}\ead[label=e5]{ofer.zeitouni@weizmann.ac.il}}

  \runauthor{J. Berestycki, Y.H. Kim, E. Lubetzky, B. Mallein and O. Zeitouni}

 \address[A]{Department of Statistics and Magdalen College,
University of Oxford, UK, \printead{e1}}

 \address[B]{Courant Institute of Mathematical Sciences, New York University, \printead{e2}}

 \address[C]{Courant Institute of Mathematical Sciences, New York University, \printead{e3}}

 \address[D]{LAGA UMR 7539, Universit\'e Sorbonne Paris Nord, \printead{e4}}

 \address[E]{Department of Mathematics, Weizmann Institute of Science, \printead{e5}}
 \end{aug}

 \begin{abstract}
 We consider a branching Brownian motion in $\R^d$ with $d \geq 1$ in which the position $X_t^{(u)}\in \R^d$ of a particle $u$ at time $t$ can be encoded by its direction $\theta^{(u)}_t \in \S^{d-1}$ and its distance $R^{(u)}_t$ to 0. We prove that the \emph{extremal point process}  $\sum \delta_{(\theta^{(u)}_t, R^{(u)}_t - m_t^{(d)})}$ (where the sum is over all particles alive at time $t$ and $m^{(d)}_t$ is an explicit centering term) converges in distribution to a randomly shifted, decorated Poisson point process on $\S^{d-1} \times \R$. More precisely, the so-called {\it clan-leaders} form a Cox process with intensity proportional to $D_\infty(\theta) e^{-\sqrt{2}r} \d r\d \theta $, where~$D_\infty(\theta)$ is the limit of the derivative martingale in direction $\theta$ and the decorations are i.i.d.\ copies of the decoration process of the standard one-dimensional  branching Brownian motion.
 This proves a conjecture of Stasi\'nski,  Berestycki and Mallein (Ann.\ Inst.\ H.\ Poincar\'{e} 57:1786--1810, 2021). The proof builds on that paper and
 on Kim, Lubetzky and Zeitouni (Ann. Appl. Prob. 33(2):1315--1368, 2023).
\end{abstract}

\begin{keyword}[class=MSC]
\kwd[Primary ]{60J80}
\kwd{60G70}
\kwd[; secondary ]{60J60, 60G15}
\end{keyword}

\begin{keyword}
\kwd{Branching Brownian motion}
\kwd{Cluster process}
\kwd{Decorated Poisson point
process}
\kwd{Extremal point process}
\kwd{Extreme value theory}
\end{keyword}

\end{frontmatter}

\section{Introduction}

A (binary) branching Brownian motion (BBM) in dimension $d \geq 1$ is a continuous-time branching particle system in which every particle moves independently as a Brownian motion in dimension $d$ and branches at rate $1$ into two daughter particles. For all $t \geq 0$ we write $\mathcal{N}_t$ for the set of particles alive at time $t$, and for $u \in \mathcal{N}_t$ we set $X_s^{(u)} \in \R^d$ to be the position at time $s \leq t$ of particle $u$ or its ancestor alive at that time. In this article, we will describe the structure of the limiting extremal point process, i.e. the particles that have travelled the furthest from the origin. For all $x \in \R^d$, we denote by $\P_x$ the law of the branching Brownian motion such that the initial particle starts from position $x$, with the convention $\P=\P_0$.

The study of extremal particles in dimension $d=1$ traces its roots to the work of Fisher~\cite{Fisher37}, Kolmogorov, Petrovskii and Piskunov~\cite{KPP37}, and McKean~\cite{McKean75}, and is by now well understood: indeed, using~\cite{KPP37}, McKean~\cite{McKean75} showed that  the {\it rightmost position} $M(t) = \max_{u \in \mathcal{N}_t} X_u(t)$, centered at the median of its law, converges in distribution. The seminal work of Bramson~\cite{Bramson78,Bramson83} identified the centering $m^{(1)}_t = \sqrt 2 t - \frac 3{2\sqrt 2} \log t$, and introduced the method of truncated second moment through barriers. Lalley and Sellke~\cite{LS87}  were then able to prove that
\[
 \lim_{t \to \infty} \P(M(t) -m^{(1)}_t \le y) = \E \exp \{-CD_\infty e^{-\sqrt 2 y} \} \,,
\]
where $C$ is a positive constant and $D_\infty$ is an a.s. positive random variable, constructed as the almost sure limit of the so-called derivative martingale associated to the branching Brownian motion.
Hence, the limiting law of the centered maximum $M(t) -m^{(1)}_t$ is the law of a Gumbel random variable with the random shift  $ \log (CD_\infty)/\sqrt 2$, and in fact, it follows from
\cite{LS87} that $M(t)$ has Gumbel fluctuations around $m^{(1)}_t-  \log (CD_\infty)/\sqrt 2$ (see also~\cite{Aidekon13} for extensions to branching random walks).

Finally,  it was shown (independently and around the same time) by~\cite{ABBS12} and~\cite{ABK13} that the {\it extremal point process} converges in distribution
\[
  \lim _{t \to \infty} \sum_{u \in \calN_t} \delta_{X^{(u)}_t  - m^{(1)}_t } \ =: \  \mathcal L\,,
\]
where the limit point process $\mathcal L$  can be described as follows. It is shown in~\cite{ABK13,CHL19} that conditionally on $M(t)\ge \sqrt 2 t$ (which is an unusually large displacement), the extremal point process seen from $M(t)$ converges to a limit object $\calD$. More precisely:
\begin{equation}
  \label{eqn:defDecoration}
  \lim_{t \to  \infty}  \P \left( \sum_{u \in \calN_t} \delta_{X^{(u)}_t  - M(t) } \in \cdot \mid M(t)\ge \sqrt 2 t \right) = \P(\calD \in \cdot)
\end{equation}
and $\calD$ is called the {\it decoration point process}.
Let $(\chi_i)_{i \in \N}$ be the atoms of a Poisson point process on $\R$ with (random) intensity $C D_\infty e^{-\sqrt 2 y} \d y$ and $\{\calD^{(i)}\}_{i \in \N}$ be i.i.d.\ copies of $\calD$, then we have that in distribution
\[
  \mathcal L = \sum_{i \in \N} \sum_{z\in \calD^{(i)}} \delta_{ \chi_i+ z }\,,
\]
which is called a \textit{randomly shifted decorated Poisson point process}, using the terminology from~\cite{SuZ15}, and abbreviated to SDPPP($CD_\infty$, $e^{-\sqrt{2}y}\dd y$, $\mathcal{D}$).

In words, $\mathcal L$ is obtained by shifting each atom $\chi_i$ of a Poisson point process with intensity $e^{-\sqrt{2}y}\dd y$ by $\frac{1}{\sqrt{2}} \log CD_\infty$, and decorating it by an independent copy of $\calD$.
Besides their intrinsic interest, the results for one-dimensional branching Brownian motion (and their branching random walks counterparts)  have in recent years provided a road map for the analysis of other log-correlated fields, see e.g.
\cite{Biskup20} for a discussion of the two dimensional discrete GFF and~\cite{BH20} for the (non-Gaussian) sine-Gordon field.

\begin{figure}
     \begin{tikzpicture}[>=latex,font=\small]
  \node (fig) at (0,0) {
  \includegraphics[width=9 cm]{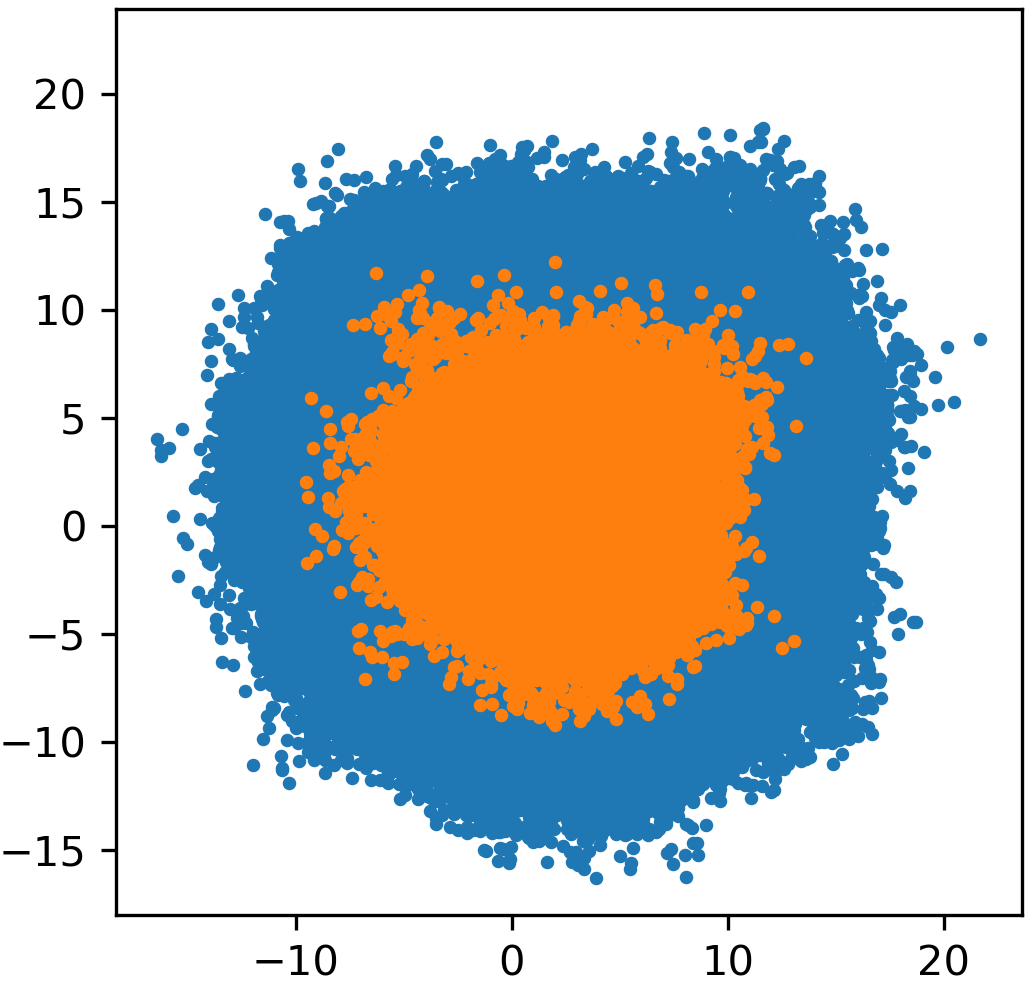}};
  \foreach \shx/\shy/\sca in {0/0/0.99, 0.25/-0.15/0.58}
  {\begin{scope}[scale=\sca, shift={(\shx,\shy)}]
  \draw[green, ultra thick, opacity=0.75] (-2.9,0) to[bend right=60] (1.5,-3.2) to[bend left=20] (3.35,-2.3)
  to [bend left=5] (3.6,1.3) to [bend left=5] (3, 2.5) to[bend right=10] (2.25, 3) to [bend right=5] (-1.4,2.8) to [bend right=5] (-1.8,2.6) to[bend right=20](-2.9,0);
  \end{scope}}
\end{tikzpicture}
\caption{A simulation of BBM, $d=2$, at times $t=10$ (orange) and $t=15$ (blue). The green curves depict (approximately) the centering term of the extremal point process $m^{(d)}_t + \frac{1}{\sqrt{2}}\log D_t(\theta)$ in polar coordinates.}
\end{figure}

By contrast, the case of the branching Brownian motion in dimension $d>1$ had until recently received far less attention. To describe what is known, we introduce
the polar decomposition for the position of particles. For all $x \in \R^d$, setting $\|x\|$ the Euclidian 
 norm of $x$, we write
\[
  R_t^{(u)} = \|X_t^{(u)}\| \in [0,\infty), \quad  \quad \theta_t^{(u)} = \frac{X_t^{(u)}}{R_t^{(u)}} \in \S^{d-1}\,,
\]
and $R^*_t := \max_{u \in \cN_t} R_t^{(u)}$ for the largest Euclidean norm of a particle at time $t$. In~\cite{Big95}, Biggins proved that, whatever the dimension $d$,
\[
  \lim_{t \to \infty} \frac{R^*_t}{t} = \sqrt{2} \quad \text{a.s.,}
\]
and  Mallein~\cite{Mallein15} proved that, setting $m_t^{(d)} := \sqrt{2} t + \frac{d-4}{2\sqrt{2}} \log t$, the process $(R^*_t - m_t^{(d)}, t \geq 0)$ is tight. Then Kim, Lubetzky and Zeitouni~\cite{KLZ21} proved that $R^*_t - m_t^{(d)}$ converges in law to a Gumbel random variable, shifted by a constant plus an independent random variable $\log Z_\infty$, thus extending the aforementioned results of Bramson~\cite{Bramson83} and Lalley-Sellke~\cite{LS87} in dimension $1$ (however, in contrast with the situation in \cite{LS87}, the random variable $Z_\infty$ is not constructed as a measurable function of the branching Brownian motion -- this matter is resolved here in Corollary~\ref{cor:equiv-of-deriv-mtgs}).

The goal of the present paper is to obtain the full description of the limit extremal point process in dimension $d>1$, that is, to describe the limit of the random point measure on  $\S^{d-1} \times \R$ defined by
\[
  \mathcal{E}_t := \sum_{u \in \mathcal{N}_t} \delta_{(\theta_t^{(u)},R_t^{(u)} - m_t^{(d)})}\,.
\]
To do that, we first discuss  what plays the role of the random shift $D_\infty$. In~\cite{StBeMa2020}, Stasi\'nski, Berestycki, and Mallein introduced a multidimensional analogue of the derivative martingale: for all $t \geq 0$ and $\theta \in \S^{d-1}$,  set
\[
  D_t(\theta) =  \sum_{u \in \mathcal{N}_t} (\sqrt{2} t - X_t^{(u)}\cdot \theta) e^{\sqrt{2} X_t^{(u)} \cdot \theta - 2t} \text{ and }  D_\infty(\theta) = \max(0,\liminf_{t \to \infty} D_t(\theta)),
\]
where $u\cdot v$ is the usual inner product in $\R^d$ and $\{X_t^{(u)}, u \in \calN_t\}$ is a $d$-dimensional BBM (started from an arbitrary fixed point $x \in \R^d$; we do not carry the $x$-dependence in the notation, as it will be clear from the context). Observe that for each fixed $\theta$, the process $\{ X_t^{(u)}\cdot\theta, u \in \calN_t\} $ (the projection of the BBM on direction $\theta$) is just a standard one-dimensional BBM, and thus $D_t(\theta)$ is the usual associated derivative martingale.
They proved that $\P_x$-almost surely, there exists a random set $\Theta \subset \S^{d-1}$ of full Lebesgue measure such that~$D_t(\theta)$ converges to $D_\infty(\theta)$ for all $\theta \in \Theta$. Further, letting $\sigma$ denote the Lebesgue measure on $\S^{d-1}$, they show that
that the measure with density $D_t(\theta) \sigma(\d\theta)$  converges weakly to the measure with density $D_\infty(\theta) \sigma(\d\theta)$, $\P_x$-almost surely. Explicitly, for any bounded measurable functions $f,g : \S^{d-1} \to \R$, define
\[
\crochet{f,g} := \int_{\S^{d-1}} f(\theta) g(\theta) \sigma(\d\theta)\,;
\]
then it is shown in~\cite{StBeMa2020}  that for any bounded measurable $f: \S^{d-1}\to \R$,
\begin{align}
\lim_{t\to\infty} \crochet{D_t, f} = \crochet{D_{\infty}, f},\; \P_x\mathrm{-a.s.} \label{eqn:sbm-convergence}
\end{align}
As such, we will often view $D_t$ and $D_{\infty}$ as measures on $\S^{d-1}$ and write $D_t(A)$ and $D_{\infty}(A)$ (for $A \subset \S^{d-1}$) to denote $\crochet{D_t, \indset{A}}$ and $\crochet{D_{\infty}, \indset{A}}$, respectively. Observe that for any $a \in \R^d$, letting
\[
  D_t^{(a)}(\theta) =  \sum_{u \in \mathcal{N}_t} (\sqrt{2} t - ((X_t^{(u)}+a)\cdot \theta) e^{\sqrt{2} (X_t^{(u)}+a) \cdot \theta - 2t} \text{ and }  D_\infty^{(a)}(\theta) = \max(0,\liminf_{t \to \infty} D_t^{(a)}(\theta))
\]
we have $D_{\infty}^{(a)}(\theta) = e^{\sqrt{2}a\cdot \theta} D_{\infty}(\theta)$. Further, $D^{(a)}_{\infty}$ under $\P_x$ has the same law as $D_{\infty}$ under $\P_{x+a}$.

Our main theorem builds on~\cite{KLZ21} and~\cite{StBeMa2020} and describes the limit extremal point process; it answers in the affirmative Conjecture 1.4 from~\cite{StBeMa2020}.

\begin{theorem}
\label{thm:main}
For all $x \in \R^d$, under the law $\P_x$, the  extremal point process $\mathcal{E}_t$ converges in distribution
for the topology of vague convergence to a decorated Poisson point process $\mathcal{E}_\infty$ on $\S^{d-1} \times \R$ with the following description:
let $\alpha_d := (d-1)/2$ and $\gamma >0$ be the positive constant defined in~\eqref{def:gamma} below. Let $\{(\theta_i, \xi_i)\}_{i \in \N}$ be the atoms of a Poisson point process on $\S^{d-1} \times \R$ with intensity
\[
D_\infty(\theta) \sigma(\d \theta) \times
\gamma \pi^{-\alpha_d/2} \sqrt{2} e^{-\sqrt{2}y} \d y \,.
\]
Let  $\{\calD^{(i)}\}_{i \in \N}$ be i.i.d.\ copies of the decoration point process $\calD$ for the one-dimensional BBM
as in \eqref{eqn:defDecoration}.
Then
\[
  \mathcal{E}_\infty = \sum_{i = 1}^\infty \sum_{r \in \calD^{(i)}} \delta_{(\theta_i,\, \xi_i + r  )}\,.
\]
Furthermore, the convergence in distribution holds jointly with that of
$D_t(\cdot)$ to $D_\infty(\cdot)$.
\end{theorem}

\begin{remark}
\label{rem:EVERYONE}
Theorem~\ref{thm:main}, as well as the other results stated in this article also hold in dimension $d=1$, where they are usually immediate consequences of known results in~\cite{ABBS12,ABK13}. In this case, $\sigma$ is the measure $\delta_1+\delta_{-1}$ on the sphere $\S^{0} = \{1,-1\}$. In other words, in all dimension $\sigma$ is the Haar measure on $\S^{d-1}$.
\end{remark}

Before proceeding with the proof, we make several comments regarding Theorem~\ref{thm:main} and its meaning. We first observe that, through classical Poisson computations, the extremal point process can be constructed in the following fashion. Conditionally on the branching Brownian motion, we draw an independent Poisson point process $(\chi_i)_{i \in \N}$ with intensity $\gamma \pi^{-\alpha_d/2} \sqrt{2}e^{-\sqrt{2}y} \dd y$, i.i.d. random variables $(\theta_i)_{i \in \N}$ in $\S^{d-1}$ with law $D_\infty(\theta) \sigma(\d \theta) / D_\infty(\S^{d-1})$ and i.i.d. point processes $(\calD^{(i)})_{i \in \N}$ with the same law as $\mathcal{D}$. Then
\begin{equation}
  \label{eqn:alternativeFormula}
  \mathcal{E}_\infty = \sum_{i = 1}^\infty \sum_{r \in \calD^{(i)}} \delta_{\left(\theta_i,\,\tfrac{1}{\sqrt{2}}\log D_\infty(\S^{d-1}) + \chi_i + r\right)}\,.
\end{equation}
In other words, the extremal point process is constructed from an exponential Poisson point process for the description of the norms, shifted by $\frac{1}{\sqrt{2}} \log D_\infty(\S^{d-1})$, so that to each atom is associated independently an angle sampled proportionally to $D_\infty(\theta)\sigma(\dd \theta)$ and an i.i.d. decoration, whose law does not depend on the dimension.

Additionally, the following Lalley-Sellke type result (Proposition~\ref{prop:Lalley and Sellke}) can be deduced from Theorem  \ref{thm:main}. Define $\mathcal{F}_L=\sigma(X_u(s), u \in \mathcal{N}_L, s\leq L)$, and note that,  almost surely, there exists a unique particle $u \in \mathcal N_t$ such that $R^{(u)}_t = R^*_t$. We  write $\theta^*_t:= \theta^{(u)}_t \in \S^{d-1}$.
\begin{proposition}\label{prop:Lalley and Sellke}
For any measurable set $A\subseteq \S^{d-1}$ such that $\sigma(\partial A) = 0$, we have
\begin{equation}\label{E:Lalley Sellke angle}
\lim_{L\to \infty} \lim_{t \to \infty} \P\left(  \theta^*_t \in A \mid  \cF_L \right) = D_\infty(A)/D_\infty(\S^{d-1}) \; \text{ almost surely,}
\end{equation}
and for all $y \in \R$, we have
\begin{equation}\label{E:Lalley Sellke modulus}
\lim_{L\to \infty} \lim_{t\to\infty} \P( R_t^* \leq m_t^{(d)}+y \given \cF_L ) =  \exp\left(- \gamma\pi^{-\alpha_d/2} D_\infty(\S^{d-1}) e^{-y\sqrt{2}}\right) \; \text{ almost surely.}
\end{equation}
\end{proposition}
Equation \eqref{E:Lalley Sellke angle} which gives the asymptotic direction of the maximal displacement was conjectured in~\cite{StBeMa2020}.  Furthermore, we record Corollary~\ref{cor:cvd}, which is an integrated version of the second statement \eqref{E:Lalley Sellke modulus} and shows that the limit distribution $R_t^* - m_t^{(d)}$ can be expressed in terms of the Laplace transform of $D_\infty(\S^{d-1})$. This is very close to the main result in~\cite{KLZ21} (see Corollary~\ref{cor:equiv-of-deriv-mtgs} for more on how they differ).

Our next comment is that, as in the one-dimensional case, the Poisson point process $\{(\xi_i, \theta_i)\}_{i \in \N}$ corresponds to the so-called ``clan-leaders'', i.e., the particles with maximal displacement in their immediate family. More precisely, for any $r\in(0,t)$ and $u \in \cN_t$ we let $[u]_r = \{ v\in \mathcal{N}_t : u \wedge v \ge t-r \}$, where $u \wedge v$ is the time of the most recent common ancestor of $u$ and $v$. In other words, $[u]_r $ is the set of particles that have branched off $u$ at most $r$ units of time prior to $t$ (the immediate family of $u$). We say that $u\in \cN_t$ is an $r$-{\it clan-leader} if $u$  is the particle which is the furthest away from the origin among $[u]_r $, and we write $\Gamma_t(r)\subset \cN_t$ for the set of all $r$-clan leaders at time $t$.
Let $r(t)$ be any function such that $r(t)\to  \infty$ but $r(t) = o (t)$.
Then our proof of Theorem~\ref{thm:main} can be extended to show that
\[
\mathcal L_t := \sum_{u \in \Gamma_t(r(t)) } \delta_{(\theta^{(u)}_t , R^{(u)}_t- m_t^{(d)})} \, \rightarrow \sum_i\delta_{(\theta_i,\xi_i)} \, \text{in distribution as $t\to \infty$}\,,
\]
where the limit is the Poisson point process on $\S^{d-1} \times \R$ from the statement of the theorem.
Moreover, if for $v \in \calN_t$ we let
\[
\calD_{t,r(t)}(v) := \sum_{w \in [v]_{r(t)}} \delta_{\big(\theta_t(w)-\theta_t(v), R_t(w) - R_t(v)\big)} \,,
\]
and define
\[
  \hat{\mathcal{E}}_t := \sum_{v^* \in \Gamma_t(r(t))} \delta_{\big(\theta_t(v^*), R_t(v^*) - m_t, \calD_{t,r(t)}(v^*) \big)}\,,
\]
then, similarly to the result of Biskup and Louidor for the 2-dimensional Gaussian free field \cite{BiL18}, we have 
\begin{equation}\label{equ:conv of the clan-leaders}
  \hat{\mathcal{E}}_t \rightarrow \hat{\mathcal{E}} \,, \quad
  \text{ where } \quad \hat{\mathcal{E}} := \sum_{i=1}^\infty\delta_{(\theta_i, \xi_i, \hat{\calD}^{(i)})}  \ \ \text{ and } \ \ \hat{\calD}^{(i)} := \sum_{r\in \calD^{(i)}} \delta_{(0,r)} \,,
\end{equation}
and $\theta_i, \xi_i,$ and $\calD^{(i)}$ are as in Theorem~\ref{thm:main}. Here, the convergence is in the weak sense for point measures on $\S^{d-1} \times \R \times \mathbb{M}$, with $\mathbb{M}$ denoting the space of Radon measures on $\R^d \times \R$ endowed with the vague topology.

Although we have not done so in the present work, extending the convergence in Theorem~\ref{thm:main} to carry the genealogical information as in \cite{BoHa17,Mal18} would not add any extra complications.

A key step in proving Theorem~\ref{thm:main}  will be to be able to use the convergence in distribution of the maximal displacement proved in~\cite{KLZ21}. However,
 there, the analogue of the derivative martingale is not given by $D_\infty(\S^{d-1})$ but rather by a certain random variable $Z_\infty$. We thus need to understand the relation between $Z_\infty$ and the measure $D_\infty(\cdot).$
Let
\[
    \cN_t^{\win} := \{u \in \cN_t : R_t^{(u)} \in \sqrt{2}t - [t^{1/6}, t^{2/3}]\}\,,
\]
and, recalling that $\alpha_d := (d-1)/2$, let
\[
\fM_t^{(u)} := (R_t^{(u)})^{-\alpha_d} (\sqrt{2}t - R_t^{(u)}) e^{-(\sqrt{2}t - R_t^{(u)}) \sqrt{2}}\,.
\]
In \cite{KLZ21}, a BBM started from the origin was considered, and the variable $Z_\infty$ was defined in as the limit in distribution of
\[
Z_t := \sum_{u \in \cN_t^{\win}} \fM_t^{(u)} \,.
\]
We show the following.
\begin{theorem}\label{lem:equiv-of-deriv-mtgs}
Let $f: \S^{d-1} \to \R$ be a  continuous  function and $x \in \R^d$, we have
\begin{align}
    \lim_{L \to \infty} (2\pi)^{\alpha_d/2}\!\!\!\! \sum_{u \in \calN_L^{\win}} f(\theta_L^{(u)})
    \fM_L^{(u)}
    =  \langle  D_{\infty}, f \rangle \,
    \label{eqn:equiv-of-deriv-mtgs}
    \quad \text{ in $\P_x$-probability.}
\end{align}
\end{theorem}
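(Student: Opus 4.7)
The plan is to relate $(2\pi)^{\alpha_d/2}\sum_{u\in\calN_L^\win} f(\theta_L^{(u)})\fM_L^{(u)}$ to $\crochet{D_L, f}$ via a Laplace/saddle-point computation and then invoke the a.s.\ convergence $\crochet{D_L, f}\to\crochet{D_\infty, f}$ provided by~\eqref{eqn:sbm-convergence}. By Fubini,
\begin{align*}
\crochet{D_L, f} = \sum_{u\in\calN_L} I_u, \qquad I_u := \int_{\sd} f(\theta)\bigl(\sqrt{2}L - R_L^{(u)}\theta\cdot\theta_L^{(u)}\bigr) e^{\sqrt{2}R_L^{(u)}\theta\cdot\theta_L^{(u)} - 2L}\sigma(\d\theta),
\end{align*}
so it suffices to prove $\sum_{u\in\calN_L} I_u = (2\pi)^{\alpha_d/2}\sum_{u\in\calN_L^\win} f(\theta_L^{(u)})\fM_L^{(u)} + o_p(1)$, since the left-hand side converges to $\crochet{D_\infty, f}$ almost surely.

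For $u\in\calN_L^\win$, write $r=R_L^{(u)}$, $s=\sqrt{2}L-r\in[L^{1/6},L^{2/3}]$, and decompose $\sd$ in polar coordinates $(\alpha,v)\in[0,\pi]\times\S^{d-2}$ around $\theta_L^{(u)}$: $\sigma(\d\theta) = \sin^{d-2}\alpha\,\d\alpha\,\sigma_{d-2}(\d v)$ and $\theta\cdot\theta_L^{(u)} = \cos\alpha$. The integrand concentrates at $\alpha\lesssim r^{-1/2}$; after rescaling $\beta = \alpha(r/\sqrt{2})^{1/2}$ and using the Taylor expansions $\cos\alpha = 1 - \alpha^2/2 + O(\alpha^4)$ and $\sin\alpha = \alpha + O(\alpha^3)$, the integral becomes Gaussian. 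Using continuity of $f$ (so $f(\theta) = f(\theta_L^{(u)}) + o(1)$ on the effective support) and the fact that $s\geq L^{1/6}$ dominates the lower-order correction $r\alpha^2/2 = O(1)$, the Gaussian evaluation using $\int_0^\infty \beta^{d-2}e^{-\beta^2}\d\beta = \Gamma(\alpha_d)/2$ combined with $\sigma_{d-2}(\S^{d-2}) = 2\pi^{\alpha_d}/\Gamma(\alpha_d)$ yields
\begin{align*}
I_u = (2\pi)^{\alpha_d/2}\,f(\theta_L^{(u)})\,\fM_L^{(u)}\bigl(1 + o(1)\bigr) \quad\text{uniformly in } u\in\calN_L^\win.
\end{align*}

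It remains to show that the contributions from particles outside the window are $o_p(1)$ both in $\sum_u I_u$ and in $\sum_u f(\theta_L^{(u)})\fM_L^{(u)}$. For the \emph{too-close} regime $s_u := \sqrt{2}L - R_L^{(u)} > L^{2/3}$, fix $\theta$ and apply the many-to-one lemma followed by a Girsanov tilt on the one-dimensional projection $X_L^{(u)}\cdot\theta$: the first moment of the contribution to $D_L(\theta)$ from such particles reduces to a Gaussian tail of order $\sqrt L\,e^{-L^{1/3}/2}$, uniformly in $\theta$. Integrating against $f$ and applying Fubini--Markov yields the $o_p(1)$ bound in $\crochet{D_L, f}$, and a parallel estimate handles $\sum_{u:s_u > L^{2/3}} f(\theta_L^{(u)})\fM_L^{(u)}$. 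For the \emph{too-far} regime $s_u < L^{1/6}$, tightness of $R_L^* - m_L^{(d)}$ from~\cite{KLZ21} implies that the number of such particles is stochastically $O_p(1)$, and pointwise bounds on $|I_u|$ and $|\fM_L^{(u)}|$ on appropriate tail events yield $o_p(1)$. Combining with~\eqref{eqn:sbm-convergence} concludes the proof.

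The main obstacle lies in the too-far regime $s_u < L^{1/6}$: individual summands in both $\sum_u I_u$ and $\sum_u f(\theta_L^{(u)})\fM_L^{(u)}$ can change sign and be of non-negligible magnitude there, so $L^1$ estimates alone do not suffice. Controlling this regime relies crucially on the barrier/stopping-line estimates underlying the tightness proved in~\cite{KLZ21}, used to rule out anomalous accumulations of particles near or past the maximum.
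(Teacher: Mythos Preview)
Your overall strategy---reduce to $\crochet{D_L,f}$, evaluate the spherical integral $I_u$ by Laplace's method for window particles, and show the non-window contributions are $o_p(1)$---is exactly the paper's approach, and your Laplace computation matches the paper's Lemma~\ref{lem:analysisTransform}. The too-close regime is also handled the same way (first moment after Girsanov).

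The gap is in the too-far regime $s_u<L^{1/6}$. Your claim that ``tightness of $R_L^*-m_L^{(d)}$ implies that the number of such particles is $O_p(1)$'' is false: $m_L^{(d)}=\sqrt2 L+O(\log L)$, so tightness of the maximum says nothing about the count in $[\sqrt2 L-L^{1/6},\sqrt2 L-C\log L]$, which is of order $e^{\sqrt2 L^{1/6}}$ in expectation. Your subsequent diagnosis that ``$L^1$ estimates alone do not suffice'' and that barrier/stopping-line input from~\cite{KLZ21} is required is also incorrect. The paper's argument is much simpler: first observe (by a crude first-moment union bound) that with probability $1-o(1)$ no particle exceeds $\sqrt2 L+K_d\log L$; on that event one has the deterministic bound $|\fM_L^{(u)}|\lesssim (\log L)L^{-\alpha_d}e^{-s_u\sqrt2}$, and a direct many-to-one computation (Lemma~\ref{lem:non-window}) shows
\[
\E\Big[\sum_{u\in\cN_L}\ind{R_L^{(u)}\notin I_L^{\win},\,R_L^{(u)}\le\sqrt2 L+K_d\log L}\,|\fM_L^{(u)}|\Big]=o(1),
\]
so Markov's inequality gives $o_p(1)$. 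For the $I_u$ side in the strip $R_L^{(u)}\in[\sqrt2 L-K_d\log L,\sqrt2 L+K_d\log L]$ (where the leading Laplace asymptotic degenerates because $\sqrt2 L-r$ is no longer dominant), one needs the companion bound \eqref{eqn:laplacemeth-bad-x}, namely $|I_u|\lesssim(\log L)L^{-\alpha_d}e^{\sqrt2(R_L^{(u)}-\sqrt2 L)}$, which then feeds into the same first-moment estimate. No second-moment or barrier machinery is needed here; your proposal would be complete once you replace the $O_p(1)$ claim and the appeal to~\cite{KLZ21} barriers by this two-step $L^1$ argument.
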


In~\cite[Remark~1.2]{KLZ21}, a formal argument was made for the distributional equivalence of $D_{\infty}(\S^{d-1})$ and a positive constant times $Z_{\infty}$;  the statement above is much stronger, showing in particular that the random variable $Z_\infty$ is a measurable function of the branching Brownian motion. More precisely, set $\hat Z_{\infty}  := (2\pi)^{-\alpha_d/2}  D_{\infty}(\S^{d-1}) $. Then the following holds.
\begin{corollary}\label{cor:equiv-of-deriv-mtgs}
$Z_t$ converges to $\hat Z_{\infty}$ in $\P_x$-probability. In particular, taking $x = 0$, we have $Z_\infty\stackrel{d}{=}\hat Z_\infty$.
\end{corollary}
See also Corollary~\ref{cor:cvd} for the convergence in distribution of the maximal displacement of a BBM started from an arbitrary point $x \in \R^d$--- this extends \cite[Theorem~1]{KLZ21}.
Theorem~\ref{thm:main} is an immediate consequence of the study of the convergence of the Laplace transform of $\mathcal{E}_t$ as $t \to \infty$ and the identification of the limit with the Laplace transform of $\mathcal{E}_\infty$.
\begin{proposition}
\label{prop:laplaceFunctional}
Let $\phi: \S^{d-1}\times \R \to \R_+$ be a continuous, compactly-supported, non-negative function.
For all $x \in \R^d$, we have
\[
  \lim_{t \to \infty} \E_x\Big[ \exp\Big( - \sum_{u \in \cN_t} \phi(\theta_t^{(u)}, R_t^{(u)} - m_t^{(d)}) \Big) \Big] =
  \E_x\Big[ \exp\Big( - \fC_d \int_{\S^{d-1}} C(\phi_{\theta}) D_{\infty}(\theta) \sigma(\d\theta) \Big) \Big]\,,
\]
where
\begin{align}
\fC_d := \sqrt{\frac{2}{\pi^{1+\alpha_d}}}\,, \label{def:fcd}
\end{align}
$\phi_{\theta}(\cdot) := \phi(\theta,\cdot)$,  and $C(\phi_{\theta})$ is defined in~\eqref{def:C(phi)}.
\end{proposition}

The structure of the paper is as follows.  In Section~\ref{sec:lemmas}, we describe several technical results. These include: the description by~\cite{KLZ21} of the trajectories of the norms of extremal particles (those reaching height within constant distance of $m_t^{(d)}$);  a simple but key stability result for the process of the angles of extremal particles (Proposition~\ref{prop:sector-result}); and a recollection of convergence results for the F-KPP equation that will be utilized throughout the rest of the paper.

In Section~\ref{sec:proof-equiv-deriv-mtgs}, we prove Theorem~\ref{lem:equiv-of-deriv-mtgs} by carefully examining the contribution of each  particle $v\in \cN_L$ to the integral $\crochet{D_L, f}$ via Laplace's saddle point method (Lemma~\ref{lem:analysisTransform}). The particles that contribute turn out to be those in $N_L^{\win}$ (Lemma~\ref{lem:non-window}), and their contributions can be matched with the terms in the sum defining $Z_L$. The $\P_x$-almost-sure convergence of $\crochet{D_L, f}$  to the right-hand side of~\eqref{eqn:equiv-of-deriv-mtgs} allows us to conclude.

In Section~\ref{sec:proof-laplace-functional}, we prove Proposition~\ref{prop:laplaceFunctional} using a key leading-order tail asymptotic on the Laplace functional (Proposition~\ref{prop:tailEstimate}) in combination with the branching property, as well as Theorem~\ref{lem:equiv-of-deriv-mtgs}. We then prove Theorem~\ref{thm:main} using Proposition~\ref{prop:laplaceFunctional} and the identification of the Laplace transform of $\mathcal{E}_\infty$. We end that section by proving Proposition~\ref{prop:Lalley and Sellke} as a consequence of Theorem~\ref{thm:main}.

Proposition~\ref{prop:tailEstimate} is then proved in Section~\ref{sec:proof-tail-estimate} using information on the trajectories of the norms of the extremal BBM particles and a coupling with one-dimensional BBM similar to the one used in~\cite{KLZ21}. Crucially,  the information on the norm trajectories is enough, due to the angular stability result Proposition~\ref{prop:sector-result}.

\section{Preliminaries}
\label{sec:lemmas}
\subsection{Notation for asymptotics}
\label{subsec:asymp-notation}
For functions $f(t)$ and $g(t)$, we  write $f \sim g$ to denote the relation $f/g \to 1$ as $t \to \infty$. When needed, we emphasize the dependence on $t$ by writing
$f\sim_t g$.  We  write $f \lesssim g$  to mean there exists some constant $C>0$ such that for all $t$ sufficiently large, $f(t) \leq Cg(t) $. We write $f\asymp g$ to mean $f\lesssim g$ and $g \lesssim f$.

In what follows,
we will consider time parameters $t$ and $L$, where $t$ is sent to infinity before $L$. We will also consider a parameter $z \in [L^{1/6},L^{2/3}]$.
For functions $f:=f(t,L,z)$ and $g:=g(t,L,z)$,
we write
$f \usim g$ to denote the relation
\begin{align*}
    \lim_{L \to \infty}
    \liminf_{t\to\infty}
    \inf_{z \in [L^{1/6}, L^{2/3}]}
    \frac{f}{g} =
    \lim_{L \to \infty}  \limsup_{t\to\infty}
    \sup_{z \in [L^{1/6}, L^{2/3}]}
    \frac{f}{g} =
    1 \,.
\end{align*}
We  write
$f= o_u(g)$ if
\[
    \limsup_{L \to \infty} \limsup_{t \to \infty} \sup_{z \in [L^{1/6}, L^{2/3}]}  \frac{f}{g} = 0 \,.
\]
When functions have no dependency in the variable $z$, we still write $\sim_{(u)}$ and $o_u$ as above, ignoring the $\sup$ and $\inf$ over $z$.

\subsection{Trajectories of the norms of the extremal particles} \label{subsec:trajectories}
A key step towards the convergence result of~\cite{KLZ21} was the following characterization of the trajectories of the norms of particles that reach height $m_t^{(d)}+y$ at time $t$, where  $y\in \R$ is a constant. Let $L$ be a time parameter that is sent to infinity \textit{after} $t$ (so, from the perspective of $t$, $L$ is just a large constant), and let $\ell := \ell(L)$ be any function such that $\ell \in [1, L^{1/6}]$ and $\ell \to \infty$ as $L \to \infty$. Then, with probability $1- o_u(1)$, any particle $v \in \cN_{t-\ell}$ that produces a descendent $u \in \cN_t$ such that $R_t^{(u)} > m_t^{(d)}+y$ did the following:
\begin{enumerate}
    \item $R_L^{(v)} \in I_L^{\win} :=  \sqrt{2}L - [L^{1/6}, L^{2/3}]$ ;
    \item for $s \in [L,t-\ell]$, $R_s^{(v)}$ was bounded above by $m_t^{(d)} s/t +y$ and below by some explicit function ; and
    \item $R_{t-\ell}^{(v)} \in m_t^{(d)}(t-\ell)/t +y - [\ell^{1/3}, \ell^{2/3}]$\,.
\end{enumerate}
In words, the  norm of $v$ at a constant order time from the beginning and from the end lies in a small window; in between these times, the norm of $v$ stays in a sufficiently tight corridor. See Figure~\ref{fig:intro-lb} for a depiction of such a trajectory.
\begin{figure}
     \begin{tikzpicture}[>=latex,font=\small]
 \draw[->] (0, 0) -- (10, 0);
  \draw[->] (0, 0) -- (0, 5.1);
   \coordinate  (b0) at (9.55,0);
   \coordinate  (L) at (2.2,0);
  \fill[color=green!10] (2.2,0.15) to[bend right=20] (8.4,2.75) -- (8.4,3.88) -- (2.2,1.05);
   \node[circle,fill=blue!45!purple,inner sep=1.5pt] (o) at (0, 0) {};
   \node[circle,fill=blue!75!black,label={[blue!75!black]right:$m_t^{(d)}+y$},inner sep=1.5pt] (b) at ($(b0)+(0,4.4)$) {};
   \draw[blue!75!black, thick] ($(o)+(0.05,0.05)$) -- (b);
   \node[circle, fill=blue!50, inner sep=1.5pt, opacity=0.75] (y1) at ( $(b)-(1.15,0.9)$) {};
   \node[circle, fill=blue!50, inner sep=1.5pt, opacity=0.75] (y2) at ( $(y1)-(0,0.4)$) {};
   \node[circle, fill=blue!50, inner sep=1.5pt, opacity=0.75] (x1) at ( $(L)+(0,0.75)$) {};
   \node[circle, fill=blue!50, inner sep=1.5pt, opacity=0.75] (x2) at ( $(x1)-(0,0.4)$) {};
  \node[circle, fill=green!50!blue, inner sep=1.5pt] (ba1) at ( $(x2)-(0,0.2)$ ) {};
  \node[circle, fill=green!50!blue, inner sep=1.5pt] (ba2) at ( $(y2)-(0,0.35)$ ) {};
  \draw[green!50!blue, thick] (ba1) to[bend right=20] (ba2);
  \node (fig1) at (4.82,2.35) {
  \includegraphics[width=0.67\textwidth]{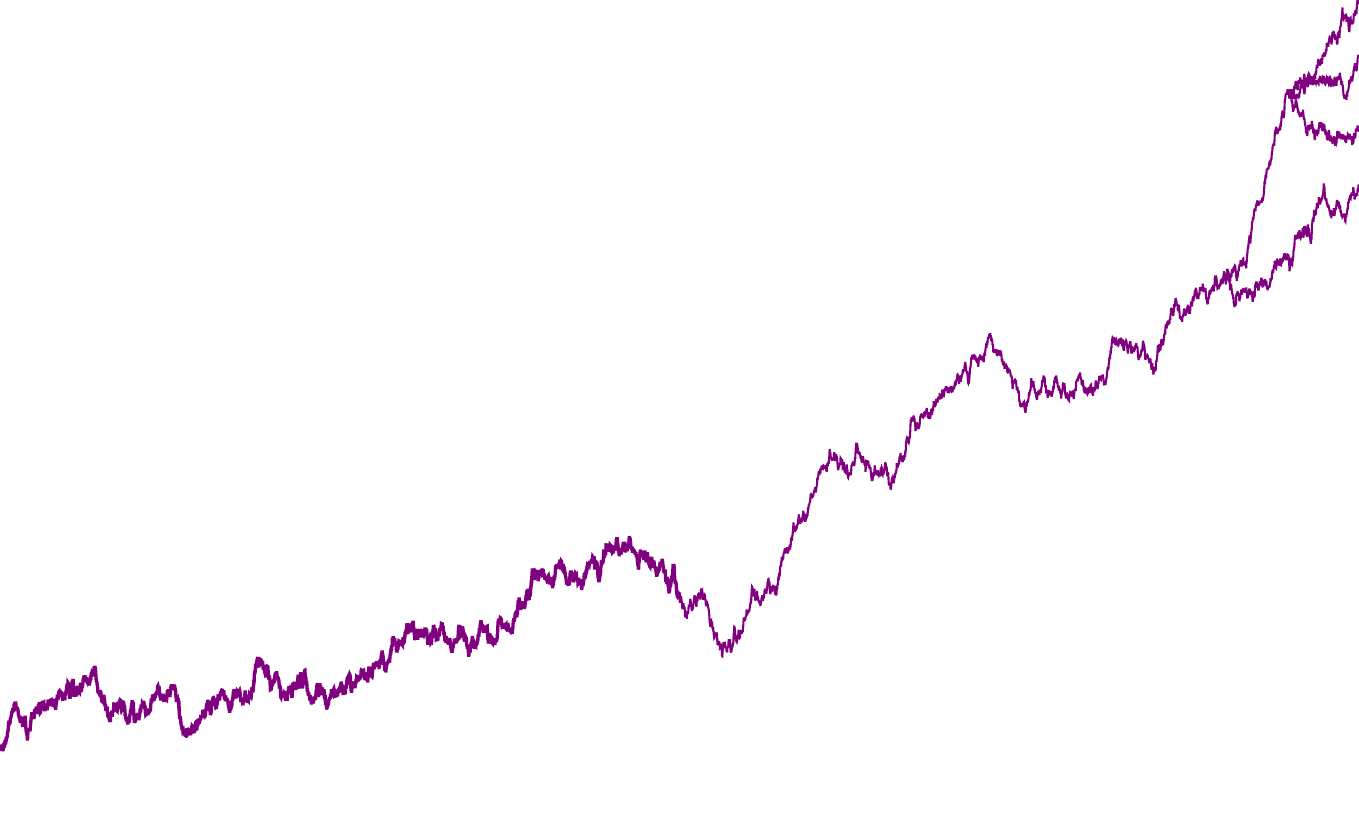}};
  \draw[blue!50, line width=3pt, opacity=0.5] (y1) -- (y2);
    \draw[blue!50, line width=3pt, opacity=0.5] (x1) -- (x2);
  \node[below] at (0,-0.05) {$0$};
  \draw ($(L)+(0,0.05)$) -- ($(L)+(0,-0.05)$) node[below] {$L$};
  \draw ($(b0)+(-1,0.05)$) -- ($(b0)+(-1,-0.05)$) node[below] {$ t - \ell$};
  \draw ($(b0)+(0,0.05)$) -- ($(b0)+(0,-0.05)$) node[below] {$ t$};
  \end{tikzpicture}
     \caption{Trajectory of the norm of a typical particle considered in $\cE_t$: at time $L$ are at height in $I_L^\win$, stay in the shaded (green) region up to time $t-\ell$, at time $t-\ell$ are located in another window, and then produce a descendant that reaches $m_t^{(d)}+y$ at time $t$.}
     \label{fig:intro-lb}
 \end{figure}

This characterization of the extremal trajectories will be key for the proof of Proposition~\ref{prop:laplaceFunctional}.
More precisely, Proposition~\ref{prop:laplaceFunctional} follows quickly from the tail estimate Proposition~\ref{prop:tailEstimate}, the proof of which completely relies on the above trajectory characterization. This proof is given in Section~\ref{sec:proof-tail-estimate}, where the trajectory characterization is given in full detail along with genealogical information: see Propositions~\ref{prop:klz21-1stmom-results} and~\ref{prop:klz21-2ndmom}. Prior to Section~\ref{sec:proof-tail-estimate}, we will use item 1 above  multiple times, and so we state it precisely below.
\begin{proposition}[{\cite[Theorem~3.1]{KLZ21}}] \label{prop:window-time-L}
For any $y \in \R$, we have
\begin{align*}
        \lim_{L\to \infty}
        \limsup_{t\to\infty}
        \P
        \Big(\exists v \in \cN_t :
             ~R_L^{(v)} \not \in I_L^{\win}\,,\,
            R_t^{(v)} > m_t^{(d)} + y
            \Big)
    = 0 \,.
\end{align*}
\end{proposition}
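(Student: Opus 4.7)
The plan is to partition the bad event into
\[
A_1 = \{\exists v \in \cN_t : R_L^{(v)} > \sqrt{2}L - L^{1/6},\ R_t^{(v)} > m_t^{(d)} + y\}
\]
(ancestor too high at time $L$) and $A_2 = \{\exists v \in \cN_t : R_L^{(v)} < \sqrt{2}L - L^{2/3},\ R_t^{(v)} > m_t^{(d)} + y\}$ (ancestor too low), and to bound each via a first-moment/many-to-one argument. The underlying heuristic is a Brownian-bridge one: conditional on having a descendant reaching $m_t^{(d)} + y$ at time~$t$, the trajectory $s \mapsto R_s^{(v)}$ is concentrated near the straight line from $0$ to $m_t^{(d)}+y$ with Gaussian-bridge fluctuations of order $\sqrt{L(t-L)/t}\approx\sqrt{L}$ at time~$L$, which lies in $[L^{1/6}, L^{2/3}]$ for large $L$.

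For $A_2$, I would apply the Markov branching property at time~$L$: each ancestor $v \in \cN_L$ independently contributes the probability that its subtree of BBM reaches height $m_t^{(d)} + y$ at time~$t$. Using translation-invariance of BBM and the exponential upper tail $\P(R^*_s > m^{(d)}_s + z) \lesssim (1+z_+) e^{-\sqrt{2} z_+}$ (an input from \cite{KLZ21,Mallein15}), the subtree contribution is bounded by $C (1 + \rho_v + y) e^{-\sqrt{2}(\rho_v + y)}$ with $\rho_v := \sqrt{2} L - R_L^{(v)}$. The many-to-one lemma then reduces the first moment to
\[
e^{-\sqrt{2} y}\, C\!\int_{L^{2/3}}^{\sqrt{2}L} (1 + \rho)(\sqrt{2}L - \rho)^{d-1} L^{-d/2} e^{L - \rho^2/(2L)}\, \mathrm{d}\rho,
\]
which after Gaussian-tail estimation is of order $L^{d/2} e^{-L^{1/3}/2} e^{-\sqrt{2}y}$, vanishing as $L \to\infty$ uniformly in large $t$.

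For $A_1$, a bare first-moment estimate along the same lines fails for $d \ge 2$ because one obtains only a polynomial prefactor $L^{d/2 - 2/3}$ which does not vanish. I would therefore combine the branching property at time $L$ with a Bramson-type barrier restriction on the trajectory up to time~$L$: an ancestor $v \in \cN_L$ with $\rho_v < L^{1/6}$ whose subtree reaches $m_t^{(d)} + y$ must, outside an event of probability tending to $0$ with $L$, have had its trajectory strictly above an interpolating Bramson curve $\psi(s) = \sqrt{2} s - \tfrac{3}{2\sqrt{2}} \log(1 + \min(s, L))$ at some intermediate time. A standard first-moment-with-barrier estimate then yields that such above-barrier trajectories contribute a vanishing amount to $\P(A_1)$.

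The main obstacle is the Bramson-type barrier estimate in the radial $d$-dimensional setting. The norm process $s \mapsto R_s^{(v)}$ is not itself Markov, so one must couple with a Bessel-$d$ process along each lineage and carefully track the $d$-dependent logarithmic correction $\frac{d-4}{2\sqrt{2}} \log t$ appearing in $m_t^{(d)}$. This adaptation of Bramson's original one-dimensional ballot argument is precisely the technical content of \cite[Theorem 3.1]{KLZ21}, from which Proposition~\ref{prop:window-time-L} is extracted as its window-at-time-$L$ component.
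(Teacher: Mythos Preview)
The paper does not actually prove this proposition --- it is quoted as \cite[Theorem~3.1]{KLZ21} and used as a black box throughout. There is thus no ``paper's own proof'' to compare against; what you have done is sketch the argument behind the cited result.

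Your outline is essentially the right one and matches the structure of the argument in \cite{KLZ21}: split the complement of $I_L^{\win}$ into ``too low'' and ``too high'', handle the low part $A_2$ by the branching property at time $L$ combined with the uniform upper tail on $R^*_{t-L}$, and handle the high part $A_1$ via a Bramson barrier together with a ballot estimate. Two small corrections are worth noting. First, the displayed first-moment integral for $A_2$ has a spurious $e^L$ in the exponent: the $e^L$ from many-to-one, the Bessel density factor $e^{-(\sqrt 2 L-\rho)^2/(2L)}$, and the $e^{-\sqrt 2 \rho}$ from the tail bound combine to give simply $e^{-\rho^2/(2L)}$; this is precisely what yields your (correct) conclusion of order $L^{d/2} e^{-L^{1/3}/2}$. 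Second, your phrasing of the $A_1$ step is slightly inverted --- the decomposition is
\[
\P(A_1) \le \P(\text{some particle crosses the barrier on }[0,t]) + \E\big[\#\{v: \text{below barrier},\, R_L^{(v)}>\sqrt 2 L - L^{1/6},\, R_t^{(v)}>m_t+y\}\big],
\]
where the first term is controlled by a direct first-moment bound and the second by a ballot/bridge estimate for the constrained path (it is not that the trajectory ``must'' cross above). Your final paragraph correctly identifies the Bessel adaptation --- the Girsanov correction $(R/W)^{\alpha_d}$ and the $\tfrac{d-4}{2\sqrt 2}\log t$ centring --- as the genuine technical content imported from \cite{KLZ21}.
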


\subsection{Many-to-one lemma and multidimensional Brownian motions}
\label{subsec:bessel}
Many-to-few lemmas are ubiquitous tools in the study of spatial branching processes. They connect the moments of additive functionals of the branching process with estimates related to a typical trajectory. In this article, we use a simple version of the many-to-one lemma that relates the mean of an additive functional of the branching Brownian motion with a Brownian motion estimate. We refer to~\cite{HaRo17} for the description of the general settings.
\begin{lemma}[Many-to-one lemma]
Fix $ d \in \N$, and let $X_{\cdot}$ denote a $d$-dimensional Brownian motion.
For any $T\geq 0$ and $y \in \R$, and for any non-negative measurable function $f:C^d[0,T] \to \R$,  we have
\label{lem:many-to-one}
\begin{align}
    \E_x\bigg[ \sum_{v \in \cN_T} f\big((X_s^{(v)})_{s \leq T}\big)\bigg]
    =
    e^T \E_x \Big[f\big((X_s)_{s \leq T}\big)\Big]\,.
    \label{eqn:many-to-one}
\end{align}
Here,  $C^d[0,T]$ denotes the set of continuous functions from $[0,T]$ to $\R^d$.
\end{lemma}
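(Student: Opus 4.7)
The plan is to prove the identity by decoupling the spatial motion from the branching genealogy and then using the elementary fact that a Yule process with branching rate $1$ has mean population $e^T$. The result is standard, so the main work is organising the bookkeeping cleanly.

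First I would fix the canonical construction of the BBM: sample a random genealogical tree $\mathfrak{T}$ in which each particle lives an independent $\operatorname{Exp}(1)$ lifetime and then splits into two, and then, \emph{independently of $\mathfrak{T}$}, assign to each edge $e$ of $\mathfrak{T}$ an independent $d$-dimensional Brownian increment of duration $|e|$. For a particle $v \in \mathcal{N}_T$ the trajectory $(X_s^{(v)})_{s\le T}$ is the concatenation of these increments along the unique ancestral line from the root to $v$. By construction, for \emph{any} fixed measurable choice of leaf $v$ in $\mathfrak{T}$, the conditional law of $(X_s^{(v)})_{s\le T}$ given $\mathfrak{T}$ is precisely that of a $d$-dimensional Brownian motion $(X_s)_{s\le T}$ started at $x$, because a concatenation of independent Gaussian increments of total duration $T$ is a Brownian motion of duration $T$, with no conditioning on the branching times.

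Conditioning on $\mathfrak{T}$ and using this observation,
\begin{align*}
  \E_x\Big[\sum_{v \in \mathcal{N}_T} f\big((X_s^{(v)})_{s\le T}\big)\Big]
  &= \E\Big[\sum_{v \in \mathcal{N}_T} \E_x\big[f\big((X_s^{(v)})_{s\le T}\big)\,\big|\,\mathfrak{T}\big]\Big] \\
  &= \E\Big[\sum_{v \in \mathcal{N}_T} \E_x\big[f\big((X_s)_{s\le T}\big)\big]\Big] \\
  &= \E[|\mathcal{N}_T|]\cdot \E_x\big[f\big((X_s)_{s\le T}\big)\big].
\end{align*}
To conclude, I would note that $|\mathcal{N}_T|$ is a Yule process with birth rate $1$, so $g(T):=\E[|\mathcal{N}_T|]$ satisfies $g'(T)=g(T)$ with $g(0)=1$, giving $\E[|\mathcal{N}_T|]=e^T$. (Equivalently, one can verify this by a one-step analysis: conditioning on the first branching time $\tau\sim\operatorname{Exp}(1)$ yields $g(T)=e^{-T}+\int_0^T e^{-s}\cdot 2g(T-s)\,\mathrm{d}s$, whose unique solution is $e^T$.)

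There is no real obstacle; the only point to handle carefully is the justification that an arbitrary measurable choice of leaf in the random tree does yield a Brownian-motion trajectory. This is immediate from the product construction above, but if one prefers to avoid "selecting a leaf", the identity can alternatively be proved by induction on the number of branching events in $[0,T]$, splitting at the first branching time and using the Markov property of Brownian motion together with the branching property of the BBM; both approaches give the stated formula.
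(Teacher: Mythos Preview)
Your proof is correct and is one of the standard arguments for the many-to-one lemma. The paper, however, does not actually prove this statement: it merely records it as a known tool and refers to \cite{HaRo17} for the general setting. So there is no ``paper's proof'' to compare against; you have supplied a self-contained argument where the paper chose to cite one. Your decoupling of the genealogy from the spatial motion, together with the Yule-process computation $\E[|\mathcal{N}_T|]=e^T$, is exactly the textbook route, and your closing remark about the alternative first-branching induction is a reasonable safeguard against any measurability quibble about ``choosing a leaf''.
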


Recall that the norm of a $d$-dimensional Brownian motion is a $d$-dimensional Bessel process.
In particular, $\{R_s^{(v)}\}_{s>0 , v\in \cN_s}$ is a branching Bessel process.
Throughout, we will write $R_.$ to denote the process given by the norm of standard $d$-dimensional Brownian motion, and we will write $W_.$ to denote a standard Wiener process. When $R_0 >0$ and $d \geq 2$, we have the following SDE  (see~\cite[Chapter~XI]{ReYo99} for a treatment of Bessel processes):
\begin{align}
    \d R_t = \frac{\alpha_d}{R_t} \d t + \d W_t,
    \label{eqn:bessel_SDE}
\end{align}
where we recall that $\alpha_d := (d-1)/2$.

We will also use the fact that $\|X_L^{(v)}\|  \overset{(d)}{=} L^{1/2} \chi_d$ for $X_0^{(v)} = 0$, where $\chi_d$ is a chi random variable with $d$ degrees of freedom.
Letting $p_L^R(\cdot,\cdot)$ denote the transition density of a $d$-dimensional Bessel process at time $L$ and $p^{\chi_d}$ denote the density of $\chi_d$, we have
\begin{align}
    p_L^R(0, r) &= L^{\frac{1}2} p^{\chi_d} \big( L^{-\frac{1}2}r \big) = c_d L^{-d/2} r^{d-1} e^{- \frac{r^2}{2L}} \,.
    \label{eqn:bessel-density}
\end{align}
In particular, by integration by parts, there exists $C_d > 0$ such that for all $a,L>0$ we have
\begin{equation}
  \label{eqn:tailBessel}
  \P(R_L \geq a) = \frac{c_d}{L^{1/2}}\int_a^\infty \left( \frac{r}{L^{1/2}} \right)^{d-1} e^{-r^2/2L} \dd r \leq C_d \left( \frac{a}{L^{1/2}} \right)^{d-2} e^{-a^2/2L}.
\end{equation}

It is worth noting that considering the polar decomposition of a Brownian motion $B$ in $\R^d$ as the diffusion $((R_t,\theta_t), t \geq 0)$ on $\R_+ \times \S^{d-1}$ gives $(R_t,t\geq 0)$ as a $d$-dimensional Bessel process, and conditionally on the latter, gives $(\theta_t, t\geq 0)$ as a time-inhomogeneous Brownian motion on the sphere with diffusion constant ${1}/{R_t^2}$ at time $t$. In particular, $\theta_t$ converges in law as $t \to \infty$ to the uniform distribution on the sphere. However, note that conditionally on $\{ R_t \geq \epsilon t, t \geq 0\}$, $\theta_t$ converges almost surely to a random point of the sphere.

\subsection{Stability of the angular process}
As the radial part of the typical trajectory of a particle at distance $m_t^{(d)}$ at time $t$ has grown linearly over time, we deduce from the observation above that its angular part $\theta^{(u)}_t$ should be converging, and in particular be close to $\theta^{(u)}_s$ for $s$ large enough. The main result of this section confirms this heuristic by  stating that the direction of extremal particles at time $t$ are very close to the direction of their ancestors at time $L$ with high probability. This proves that the direction of all individuals in the same clan is identical.
Before stating the result, we introduce  the following notation: for any $s < t$ and $v \in N_s$, define
\[
\calN_{t-s}^v := \{ u \in \calN_t : \text{$u$ is a descendant of $v$}\}\,.
\]

\begin{proposition}\label{prop:sector-result}
Fix $ y \in \R$. For all $L$ large enough, we have
\begin{align}
  \limsup_{t \to \infty}
  \P \big( \exists v\in \cN_L^{\win} \, , \, u \in \cN_{t-L}^v : \| \theta_L^{(v)} - \theta_t^{(u)}\| \geq 2  L^{-1/12} \,,\, R_t^{(u)}  > m_t^{(d)}+ y\big)
  \leq e^{-L^{5/6}}. 
    \label{eqn:prop-sector-result}
\end{align}
\end{proposition}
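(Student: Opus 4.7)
My plan is to combine the many-to-one lemma with a conditional angular estimate coming from the polar decomposition of the Brownian motion. Recall from Section~\ref{subsec:bessel} that if $X$ is a $d$-dimensional Brownian motion starting at the origin, then given $R_L = \ell$ and $R_t = r$, the conditional distribution of $\theta_t$ is von Mises--Fisher on $\S^{d-1}$, centered at $\theta_L$ with concentration $\kappa = r\ell/(t-L)$. For a typical extremal trajectory, with $\ell \in I_L^{\win}$ (so $\ell \approx \sqrt{2}L$) and $r \approx m_t^{(d)} \approx \sqrt{2}t$, one has $\kappa \approx 2L$, and the vMF tail bound gives
\[
\P\bigl(\|\theta_L - \theta_t\| \geq 2L^{-1/12} \,\big|\, R_L=\ell,\, R_t=r\bigr) \;\lesssim\; \exp\!\bigl(-\tfrac{1}{2}\kappa (2L^{-1/12})^2\bigr) \;\lesssim\; \exp\!\bigl(-4 L^{5/6}\bigr),
\]
which yields the desired $e^{-L^{5/6}}$ exponent with a factor of $4$ to spare.

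To promote this per-trajectory estimate to a statement about the BBM, I would introduce the good event $\cG$ that every $v \in \cN_t$ with $R_t^v > m_t^{(d)}+y$ satisfies both $R_L^v \in I_L^{\win}$ and has its radial trajectory $(R_s^v)_{s \in [L, t-\ell]}$ confined to a linear corridor $c_1 s \leq R_s^v \leq c_2 s + y$. By many-to-one combined with a Girsanov shift by drift $\sqrt{2}$ (the Bramson barrier argument, applied to the Bessel radial process), one bounds $\P(\cG^c) \lesssim e^{-c L}$ for some $c > 0$, which is much smaller than $e^{-L^{5/6}}$ once $L$ is large. On $\cG$, the linear corridor plays a double role: (i) via the ballot / entropic-repulsion trick, it makes the first moment $\E[\#\{v\in\cG : R_t^v > m_t^{(d)}+y\}]$ bounded in $t$, and (ii) the lower bound $R_s^v \geq c_1 s$ forces $\int_L^t (R_s^v)^{-2}\,ds \leq C/L$, so that the vMF concentration $\kappa$ is of order $L$ uniformly over the corridor. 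Combining (i) and (ii) with the vMF tail bound yields $\P(E \cap \cG) \lesssim e^{-4L^{5/6}}$, where $E$ is the event in the proposition, and altogether
\[
\P(E) \;\leq\; \P(\cG^c) + \P(E \cap \cG) \;\lesssim\; e^{-c L} + e^{-4 L^{5/6}} \;\leq\; e^{-L^{5/6}}
\]
for all $L$ sufficiently large.

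The main technical hurdle is the quantitative control of $\P(\cG^c)$: without the linear upper barrier, the unrestricted many-to-one first moment of extremal particles grows polynomially in $t$, and no angular bound can absorb such a $t$-factor uniformly as $t \to \infty$. The linear lower barrier is equally crucial, since it is what makes $\int_L^t (R_s^v)^{-2}\,ds$ genuinely $O(1/L)$ rather than a quantity that grows with $t$. Both barrier-violation probabilities reduce, via many-to-one and Girsanov, to tail estimates for a $\sqrt{2}$-drifted Bessel process exiting a linear corridor, which are exponentially small in $L$ by standard arguments; but carrying these through cleanly in polar coordinates, alongside the vMF/Bessel joint density, is where the technical care is needed.
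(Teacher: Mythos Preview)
Your approach is genuinely different from the paper's, and it has a real gap.

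\textbf{What the paper does.} The paper never analyzes the angular process probabilistically. Instead it reduces the angular statement to a purely Euclidean one: Claim~\ref{claim:L5/6-bad} shows (via many-to-one and Mallein's tail bound on $R^*$) that with probability at most $e^{-L^{5/6}}$ some descendant of a particle in $\cN_L^{\win}$ has displacement $\|X_t^{(u)}-X_L^{(u)}\|\geq m_{t-L}^{(d)}+L^{5/6}$; and Claim~\ref{claim:otherBadAngle} is a law-of-cosines computation showing that if the ancestor is in $I_L^{\win}$, the descendant is outside $B(0,m_t^{(d)}+y)$, and the displacement is at most $m_{t-L}^{(d)}+L^{5/6}$, then the angle cannot move by more than $L^{-1/12}$. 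No lower barrier, no vMF, no spherical Brownian motion.

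\textbf{The gap in your approach.} The claim $\P(\cG^c)\lesssim e^{-cL}$ is not achievable, and this is fatal for obtaining the stated bound $e^{-L^{5/6}}$. Part of $\cG^c$ is the event that an extremal particle has $R_L^{(v)}\notin I_L^{\win}$. By the standard entropic-repulsion picture (the ballot estimate you yourself invoke), conditionally on $R_t^{(v)}>m_t^{(d)}+y$ the gap $\sqrt{2}L-R_L^{(v)}$ is of typical order $\sqrt{L}$, so the probability that $R_L^{(v)}>\sqrt{2}L-L^{1/6}$ is only polynomially small in $L$ (order $L^{-1/3}$ up to logarithms), not exponentially small. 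Proposition~\ref{prop:window-time-L} indeed only asserts $o_u(1)$ for this event. Likewise, Bramson-type barrier-violation probabilities, after many-to-one and Girsanov, give polynomial-in-$L$ rates, not $e^{-cL}$: the $e^{-cL}$ you would need would correspond to a large-deviation event for the drifted Bessel path, and barrier violations near the endpoints are not large-deviation events. Consequently your decomposition yields at best $\P(E)\leq \text{(polynomial in }L^{-1}) + e^{-4L^{5/6}}$, which does not give $e^{-L^{5/6}}$.

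\textbf{Minor point.} The vMF conditional law you quote is correct (it follows from conditioning the Gaussian increment $X_t-X_L$ on $\|X_t\|=r$), but it is not stated in Section~\ref{subsec:bessel}; that section only records the time-changed spherical Brownian motion description. If you did pursue your route, the rigorous object on the good event is the spherical BM run for time $\int_L^t R_s^{-2}\,\d s$, not the vMF law (which ignores the conditioning on the intermediate path); you already note this in your point~(ii), but the two should not be conflated.

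The paper's geometric route sidesteps all of this: it needs no lower barrier and no angular distributional input, only the single displacement tail in Claim~\ref{claim:L5/6-bad}, which genuinely does give the rate $e^{-L^{5/6}}$.
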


Proposition \ref{prop:sector-result} is an immediate consequence of the two following claims.

\begin{claim}\label{claim:L5/6-bad}
For all $L$ sufficiently large and $t$ sufficiently large compared to $L$,
\begin{align}
    \P\Big( \exists v\in \calN_L^{\win} \, , \, u \in \cN_{t-L}^v :   \|X_t^{(u)} - X_L^{(u)}\| \geq m_{t-L}^{(d)} + L^{5/6} \Big)
    \leq e^{-L^{5/6}}\,.
    \label{eqn:claim-L5/6-bad}
\end{align}
\end{claim}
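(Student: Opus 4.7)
The plan is to bound the probability by Markov's inequality applied to the number of bad pairs $(v,u)$, decoupling the problem at time $L$ via the branching Markov property into two independent ingredients: the expected cardinality of $\cN_L^{\win}$ and a uniform-in-$t$ upper tail for $R^*_{t-L}$. Conditionally on $\cF_L$, for each $v\in \cN_L$ the translated subtree $\{X_t^{(u)}-X_L^{(v)}:u\in \cN_{t-L}^v\}$ has the law of a $d$-dimensional BBM at time $t-L$ started at $0$ (and $X_L^{(u)}=X_L^{(v)}$ whenever $u$ descends from $v$), so
\[
\P\big(\exists u\in \cN_{t-L}^v:\|X_t^{(u)}-X_L^{(u)}\|\geq m_{t-L}^{(d)}+L^{5/6}\mid \cF_L\big) = \P\big(R^*_{t-L}\geq m_{t-L}^{(d)}+L^{5/6}\big),
\]
independent of $v$ and of $\cF_L$. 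A union bound over $v\in \cN_L^{\win}$ thus dominates the left-hand side of~\eqref{eqn:claim-L5/6-bad} by $\E[|\cN_L^{\win}|]\cdot \P(R^*_{t-L}\geq m_{t-L}^{(d)}+L^{5/6})$.

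For the first factor, Lemma~\ref{lem:many-to-one} combined with the Bessel density~\eqref{eqn:bessel-density} gives $\E[|\cN_L^{\win}|]\lesssim L^{d/2-1/3}e^{\sqrt 2 L^{2/3}}$; indeed, on the window $I_L^{\win}$ the density $x\mapsto L^{-d/2}x^{d-1}e^{-x^2/(2L)}$ is monotone decreasing and maximized at $x=\sqrt 2 L-L^{2/3}$, where the exponent is $-L+\sqrt 2 L^{2/3}-L^{1/3}/2$, and one then multiplies by the window width $\asymp L^{2/3}$, the polynomial factor $\asymp L^{d/2-1}$, and the branching gain $e^L$. For the second factor I would invoke a uniform-in-$s$ upper tail of the form $\P(R^*_s\geq m_s^{(d)}+y)\leq C(1+y)e^{-\sqrt 2 y}$, evaluated at $y=L^{5/6}$. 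Multiplying the two estimates,
\[
\text{LHS of \eqref{eqn:claim-L5/6-bad}} \lesssim L^{d/2+1/2}\exp\!\big(\sqrt 2 L^{2/3}-\sqrt 2 L^{5/6}\big) \leq e^{-L^{5/6}}
\]
for $L$ large, since $(\sqrt 2 -1)L^{5/6}$ eventually dominates $\sqrt 2 L^{2/3}+(d/2+1/2)\log L$.

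The main technical obstacle is securing the uniform-in-$s$ tail bound for $R^*_s$ with exponential rate strictly greater than $1$: a naive first-moment estimate via Lemma~\ref{lem:many-to-one} and~\eqref{eqn:tailBessel} is spoiled by an unwanted factor of $s$ coming from the $\frac{d-4}{2\sqrt 2}\log s$ correction in $m_s^{(d)}$, which is fatal under the order of limits ($t\to\infty$ before $L\to\infty$). Eliminating this factor calls for a truncated-moment or barrier refinement in the spirit of Bramson, adapted to the radial setting --- precisely the type of analysis already carried out in~\cite{Mallein15,KLZ21} and underlying the trajectory decomposition behind Proposition~\ref{prop:window-time-L}. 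I would therefore either quote such a statement directly from those works or re-derive it by the same barrier method; note that any exponential rate strictly greater than $1$ in fact suffices here, which gives comfortable margin.
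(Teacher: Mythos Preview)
Your approach is essentially identical to the paper's: Markov/union bound, decouple at time $L$ via the branching Markov property, estimate $\E[\#\cN_L^{\win}]$ by many-to-one and the Bessel density, and multiply by a uniform upper-tail bound for $R^*_{t-L}$. The paper resolves the obstacle you flag at the end by directly quoting \cite[Equation~1.2]{Mallein15}, which gives $\P(R^*_{s}\geq m_s^{(d)}+y)\leq K_d e^{-\sqrt 2 y}$ uniformly in $s$, so no re-derivation is needed.
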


\begin{proof}
By the Markov inequality, we have
\begin{align*}
  &\P\Big( \exists v\in \calN_L^{\win} , \, u \in \cN_{t-L}^v :   \|X_t^{(u)} - X_L^{(u)}\|
  \geq m_{t-L}^{(d)} + L^{5/6} \Big) \\
  &\leq \E\bigg[ \sum_{v \in \mathcal{N}_L^{\win}} \ind{\exists \, u \in \cN_{t-L}^v :   \|X_t^{(u)} - X_L^{(u)}\| \geq m_{t-L}^{(d)} + L^{5/6}} \bigg]\,.
\end{align*}
Observe that by the Markov property and the shift-invariance of the $d$-dimensional Brownian motion, the process $(X^{(u)}_{L+s} - X^{(v)}_L~,~u \in \mathcal{N}^{(v)}_{L+s}~,~ s \geq 0)$ is a ($d$-dimensional) branching Brownian motion started from $0$, independent of $(X^{(v)}_s, v \in \mathcal{N}_s, s \leq L)$. Therefore, by the many-to-one lemma, the right-hand side of the previous display is bounded above by
\begin{multline}
  \E\left[ \# \mathcal{N}_L^\win \right] \P(R^\ast_{t-L} \geq m^{(d)}_{t-L}+L^{5/6}) \\
  \leq e^L \P(R_L - \sqrt{2} L \in [-L^{2/3},-L^{1/6}])\P(R^\ast_{t-L} \geq m^{(d)}_{t-L}+L^{5/6})\,.
  \label{eqn:thebound1}
\end{multline}
Using \eqref{eqn:tailBessel}, we have
\[
   e^L \P(R_L - \sqrt{2} L \in [-L^{2/3},-L^{1/6}]) \leq C_d L^{d/2 - 1} e^{L- \frac{(\sqrt{2}L - L^{2/3})^2}{2L}} \leq C_d L^{d/2-1} e^{\sqrt{2}L^{2/3}}\,.
\]
Additionally, applying~\cite[Equation~1.2]{Mallein15}, there exists $K_d>0$ such that for all $t,L > 0$,
\begin{equation*}
  \P(R^\ast_{t-L} \geq m^{(d)}_{t-L}+L^{5/6}) \leq K_d e^{-\sqrt{2} L^{5/6}}.
\end{equation*}
As a consequence, \eqref{eqn:thebound1} implies that for all $L$ large enough,
\[
  \limsup_{t \to \infty} \P\Big( \exists v\in \calN_L^{\win} \, , \, u \in \cN_{t-L}^v :   \|X_t^{(u)} - X_L^{(u)}\| \geq m_{t-L}^{(d)} + L^{5/6} \Big) \leq e^{-L^{5/6}}\,,
\]
completing the proof.
\end{proof}

Claim \ref{claim:L5/6-bad} states that with high probability, extremal particles at time $t$ stay within distance $m_{t-L} + L^{5/6}$ from their ancestor at time $L$. We now use simple geometry to conclude that in this case, the direction of extremal particles have to stay close to the direction of their ancestor at time $L$, as illustrated in Figure~\ref{fig:angle-drawing}.

\begin{claim}
\label{claim:otherBadAngle}
Let $L > 0$ and $x \in \R^d$ such that $L - L^{2/3} \leq \|x\| \leq L - L^{1/6}$. For all $L$ large enough, we have
\[
  \limsup_{R \to \infty} \sup_{z \in B(x, R+L^{5/6}) \setminus B(0, R+L)} \left\| \frac{x}{\|x\|} - \frac{z}{\|z\|} \right\| \leq \sqrt{2} L^{-1/12}\,.
\]
\end{claim}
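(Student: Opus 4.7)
The plan is to reduce the claim to the law of cosines and then optimize over the allowed set of $z$. Let $\theta \in [0,\pi]$ denote the angle at the origin between the vectors $x$ and $z$. Then
\[
\cos\theta = \frac{\|x\|^2 + \|z\|^2 - \|z-x\|^2}{2\|x\|\,\|z\|},
\qquad
\left\|\frac{x}{\|x\|}-\frac{z}{\|z\|}\right\|^2 = 2(1-\cos\theta),
\]
so it is enough to lower-bound $\inf_z \cos\theta$ by $1 - \tfrac{1}{2} L^{-1/6}(1+o_L(1))$ in the limit $R\to\infty$, and then take the square root.

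Next, I would locate the extremizer. Viewing $\cos\theta$ as a function of $r=\|z\|$ and $s=\|z-x\|$, a direct computation gives $\partial_r\cos\theta = (r^2+s^2-\|x\|^2)/(2r^2\|x\|)$ and $\partial_s\cos\theta = -s/(r\|x\|)$. For $R$ large compared to $L$, both $r,s$ are of order $R$, hence $\partial_r>0$ and $\partial_s<0$. The infimum of $\cos\theta$ over the allowed set is therefore attained at the boundary $\|z\|=R+L$, $\|z-x\|=R+L^{5/6}$, which is geometrically realizable since $|L - L^{5/6}| \leq L - L^{2/3} \leq \|x\|$ for $L$ large enough ensures the triangle inequality $|r-s|\leq \|x\|\leq r+s$ holds. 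Substituting and using $(R+L)^2-(R+L^{5/6})^2 = 2R(L-L^{5/6}) + L^2 - L^{5/3}$,
\[
\inf_z \cos\theta \;=\; \frac{\|x\|^2 + 2R(L-L^{5/6}) + L^2 - L^{5/3}}{2\|x\|\,(R+L)}
\;\xrightarrow[R\to\infty]{}\; \frac{L-L^{5/6}}{\|x\|}.
\]

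Finally, invoking the hypothesis $\|x\|\leq L - L^{1/6}$,
\[
\frac{L-L^{5/6}}{L-L^{1/6}} \;=\; \frac{1-L^{-1/6}}{1-L^{-5/6}} \;=\; 1 - L^{-1/6}\bigl(1+o_L(1)\bigr),
\]
so $1-\inf_z\cos\theta \leq L^{-1/6}(1+o_L(1))$ and hence
\[
\limsup_{R\to\infty}\,\sup_z \left\|\frac{x}{\|x\|}-\frac{z}{\|z\|}\right\| \;\leq\; \sqrt{2}\,L^{-1/12}\bigl(1+o_L(1)\bigr).
\]
This is bounded by $L^{-1/12}$ up to a constant factor (the $\sqrt{2}$ loss is absorbed into the factor $2$ appearing in Proposition~\ref{prop:sector-result}). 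The proof is purely elementary; there is no real obstacle beyond the careful bookkeeping above. The geometric intuition is simply that having a displacement budget $\|z-x\|\leq R+L^{5/6}$ to reach a ball of radius $R+L$ around the origin forces $z$ to lie almost in the direction of $x$, with the alignment controlled by the excess $L - L^{5/6}$.
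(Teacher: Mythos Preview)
Your approach is the same as the paper's: both use the law of cosines to lower-bound $\cos\theta = \tfrac{x}{\|x\|}\cdot\tfrac{z}{\|z\|}$ and then pass to the norm via $\bigl\|\tfrac{x}{\|x\|}-\tfrac{z}{\|z\|}\bigr\| = \sqrt{2(1-\cos\theta)}$; your monotonicity argument in $(r,s)$ is a cleaner way to locate the extremizer than the paper's ad hoc term-dropping. You are also right that the sharp outcome is $\sqrt{2}\,L^{-1/12}(1+o_L(1))$ rather than $L^{-1/12}$---the paper's final inequality $\tfrac{1-L^{-1/6}}{1-L^{-1/3}} \geq 1 - \tfrac12 L^{-1/6}$ actually fails for large $L$---and that the extra $\sqrt{2}$ is harmlessly absorbed by the factor $2$ in Proposition~\ref{prop:sector-result}.
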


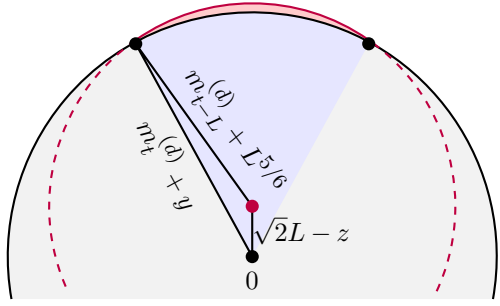
\begin{figure}
    \centering
    \begin{tikzpicture}[font=\small]
  \pgfmathsetmacro{\degA}{-10}
  \pgfmathsetmacro{\radA}{3.25pt}
  \pgfmathsetmacro{\radB}{2.7pt}
  \pgfmathsetmacro{\degB}{55}
  \coordinate (a) at (0,0);
  \coordinate (b) at (0,.67);
  \coordinate (x) at ($(\degB:\radB)+(b)+(0,-0.05)$);
  \coordinate (y) at ($(180-\degB:\radB)+(b)+(0,-0.05)$);
  \draw[thick,purple,fill=red!20] ([shift=(\degB:\radB)]b)  arc (\degB:180-\degB:\radB);
  \fill[gray!10] ([shift=(\degA:\radA)]a) arc (\degA:180-\degA:\radA);
  \fill[blue!10] (a)--(x) to[bend right=28] (y);
  \draw[thick,black] ([shift=(\degA:\radA)]a) arc (\degA:180-\degA:\radA);
  \draw[thick,dashed,purple] ([shift=(-\degB*.45:\radB)]b)  arc (-\degB*.45:\degB:\radB);
  \draw[thick,dashed,purple] ([shift=(180-\degB:\radB)]b)  arc (180-\degB:180+\degB*0.45:\radB);
  \draw[black,thick] (a)--(b) --(y) node [pos=0.35,sloped,above,xslant=-0.06]{$m_{t-L}^{(d)}+L^{5/6}$} --(a) node [pos=0.5, sloped, below]{$m_t^{(d)}+y$};
  \node[circle, fill=black, inner sep=1.75pt, label=below:$0$] at (a) {};
  \node[circle, fill=purple, inner sep=1.75pt] at (b) {};
  \node[circle, fill=black, inner sep=1.75pt] at (x) {};
  \node[circle, fill=black, inner sep=1.75pt] at (y) {};
  \node at ($(a)+(0.65,.35)$) {$\sqrt2 L - z$};
\end{tikzpicture}
    \centering
    \caption{The marked cone corresponds to the domain to which $\theta_L^{(v)} - \theta_t^{(v)}$ belongs with high probability if $\|X_t^{(v)}\| \geq m_t^{(d)}+y$ while $v \in \mathcal{N}^{\win}_L$.}
    \label{fig:angle-drawing}
\end{figure}

\begin{proof}
For  $z \in B(x,R+L^{5/6})\setminus B(0,R+L)$, straightforward computations yield that
\begin{align*}
  \frac{x}{\|x\|} \cdot \frac{z}{\|z\|} &\geq \frac{\|z\|^2 - \|x-z\|^2}{2\|x\|\|z\|} \geq \frac{(R+L)^2 - (R + L^{5/6})^2}{2\|x\| \|z\|} \geq \frac{(2R + L + L^{5/6})(L- L^{5/6})}{2(L - L^{1/6})(R+2L)}\\
  &\geq \frac{1-L^{-1/6}}{1-L^{-5/6}} \frac{2R+L+L^{5/6}}{2R+4L}\,.
\end{align*}
We observe that for all $L$ large enough,
\[
  \liminf_{R\to \infty} \frac{1-L^{-1/6}}{1-L^{-5/6}} \frac{2R+L+L^{5/6}}{2R+4L}\geq 1 - L^{-1/6}\,.
\]
As a result, using that
\[
  \left\| \frac{x}{\|x\|} - \frac{z}{\|z\|} \right\| 
  = \sqrt{2}\sqrt{1 -  \frac{x}{\|x\|} \cdot \frac{z}{\|z\|}}\,,
\]
we conclude that for all $L$ large enough,
\[
  \limsup_{R \to \infty} \sup_{z \in B(x,R+L^{5/6}) \setminus B(0,R+L)} \left\| \frac{x}{\|x\|} - \frac{z}{\|z\|} \right\| \leq \sqrt{2} L^{-1/12}\,.\qedhere
\]
\end{proof}

We can now complete the proof of Proposition~\ref{prop:sector-result}.

\begin{proof}[Proof of Proposition~\ref{prop:sector-result}]
From Claim~\ref{claim:otherBadAngle}, we have that
\begin{align*}
  &  \P \big(\exists v \in \cN_L^{\win}, u\in \cN_{t-L}^{v} : \| \theta_L^{(v)} - \theta_t^{(u)}\| \geq 2  L^{-1/12} , \, \, \, R_t^{(u)}  > m_t^{(d)}+ y\big)\\
  &\leq  \P\left( \exists v \in \cN_L^{\win}, u\in \cN_{t-L}^{v} : \|X_t^{(u)} - X_L^{(v)}\| \geq m_{t-L}^{(d)} + L^{5/6} \right) \,.
  \end{align*}
The bound in Claim~\ref{claim:L5/6-bad} finishes the proof.
\end{proof}

\subsection{Convergence results for the  F-KPP Equation}
\label{subsec:fkpp}
Branching Brownian motion is connected to the F-KPP reaction-diffusion equation. More precisely, McKean's representation connects multiplicative functionals of the one-dimensional BBM to solutions of the F-KPP equation:
\begin{proposition}[\cite{McKean75}] \label{prop:mckean}
Let $f: \R \to [0,1]$, and let $\{W_s^{(v)}\}_{s \geq 0, v \in N_s}$ denote a one-dimensional BBM.
Then, for any $w \in \R$, the F-KPP equation
\[
\partial_t u = \frac{1}{2}\partial_x^2 u - u + u^2
\]
with initial conditions $u(0,x) = 1- f(x)$ is solved by
\begin{align}
u(t, x) := \E\Big[ 1- \prod_{v \in N_{t}} f(x-W_{t}^{(v)}) \Big]\,. \label{eqn:full-mckean}
\end{align}
\end{proposition}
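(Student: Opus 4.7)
The plan is to apply the classical McKean argument: condition on the first branching time, use the Markov and branching properties to derive a Volterra-type integral equation for $u$, and then recognise this equation as a mild (Duhamel) formulation of the F-KPP PDE. The initial condition $u(0, x) = 1 - f(x)$ is immediate since $N_0$ consists of a single particle at the origin.

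For the PDE, let $\tau \sim \mathrm{Exp}(1)$ denote the first branching time of the initial particle, independent of its Brownian trajectory $(B_s)_{s \ge 0}$. On $\{\tau > t\}$ (of probability $e^{-t}$) the set $N_t$ is a singleton at position $B_t$, contributing $e^{-t} \E[1 - f(x - B_t)]$ to $u(t,x)$. On $\{\tau = s \in (0, t)\}$, the strong Markov and branching property implies that the two particles produced at time $s$ at position $B_s$ are the roots of two i.i.d.\ copies of the BBM, each running for the remaining time $t - s$; the product $\prod_{v \in N_t} f(x - W_t^{(v)})$ therefore factorises across the two subtrees, with conditional expectation given $B_s$ equal to $(1 - u(t - s, x - B_s))^2$. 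Integrating over $s \in (0,t)$ against the density $e^{-s}$ of $\tau$ and using $1 - (1-u)^2 = 2u - u^2$ yields the Volterra equation
\begin{align*}
u(t, x) = e^{-t} P_t(1 - f)(x) + \int_0^t e^{-s} P_s\bigl[\, 2u(t-s, \cdot) - u(t-s, \cdot)^2 \,\bigr](x) \, ds,
\end{align*}
where $P_s h(x) := \E[h(x - B_s)]$ denotes the Brownian heat semigroup. This is precisely the Duhamel formula associated with the linear operator $A := \tfrac12 \partial_x^2 - \mathrm{id}$ (whose semigroup is $e^{-t} P_t$) and the quadratic forcing $F(u) := 2u - u^2$, and differentiating in $t$ (after the substitution $r = t - s$ in the integral) recovers the F-KPP PDE.

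The only genuine technical point is regularity, needed to pass from the mild formulation to the classical PDE. Since $0 \le f \le 1$ one has $0 \le u \le 1$, and the heat semigroup provides parabolic smoothing for $s > 0$; a standard bootstrap in the Volterra equation then supplies enough smoothness of $u$ in both variables to justify differentiation under the integral sign and the Leibniz rule at the upper limit, yielding a classical $C^{1,2}$ solution.
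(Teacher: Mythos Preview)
The paper does not supply a proof of this proposition; it is simply quoted from McKean~\cite{McKean75}. Your argument is exactly the classical one and is correctly structured: condition on the first branching time, use the Markov/branching property to factorise the product over the two subtrees, and obtain a Volterra equation which is the Duhamel form of a semilinear heat equation.

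There is, however, one point you should not gloss over. Your Volterra equation is correct, and with $A=\tfrac12\partial_x^2-\mathrm{id}$ and forcing $F(u)=2u-u^2$ it yields, upon differentiation,
\[
\partial_t u \;=\; Au + F(u) \;=\; \tfrac12\partial_x^2 u - u + 2u - u^2 \;=\; \tfrac12\partial_x^2 u + u - u^2,
\]
which is the \emph{opposite} sign on the reaction term from the equation written in the proposition. This is because the proposition as stated mixes two conventions: the equation $\partial_t u=\tfrac12\partial_x^2 u -u+u^2$ is satisfied by $v(t,x)=\E\big[\prod_{v\in N_t} f(x-W_t^{(v)})\big]$ with $v(0,\cdot)=f$, while $u=1-v$ (the quantity actually defined in the proposition) satisfies $\partial_t u=\tfrac12\partial_x^2 u + u - u^2$. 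Your derivation is right; you simply did not carry the final identification through explicitly, and doing so would have revealed this sign discrepancy in the statement. Since the two forms are exchanged by $u\mapsto 1-u$, nothing downstream in the paper is affected, but your write-up should state precisely which PDE your $u$ solves rather than saying it ``recovers the F-KPP PDE.''
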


We will appeal to the following F-KPP convergence result several times.
\begin{proposition}[{\cite[Proposition~3.2, Lemma~4.6, Lemma~4.8]{ABK13}}] \label{prop:abk-tailconvergence}
Let $u(t,x)$ solve the F-KPP equation with initial condition $g(x) \in [0,1]$ satisfying
\[
\sup \{ y : g(y) > 0 \} < \infty\,.
\]
Then there exists a positive, finite constant $C_g$ depending only on $g$ such that for any constant $c \in \R$, we have
\begin{align*}
    C_g e^{\sqrt{2}c} &= \lim_{\ell \to \infty} \int_{0}^{\infty} we^{\sqrt{2}w} u(\ell, \sqrt{2}\ell +w+c) \d w
    =\lim_{\ell \to \infty} \int_{\ell^{1/3}}^{\ell^{2/3}} we^{\sqrt{2}w} u(\ell, \sqrt{2}\ell +w+c) \d w \,.
\end{align*}
\end{proposition}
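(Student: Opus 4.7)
My plan is to derive the proposition as a combination of Bramson's convergence theorem for the F-KPP equation, refined to a uniform tail asymptotic in the extremal window, together with standard tail cut-offs. The three ingredients are: (i) a uniform tail asymptotic for $u(\ell, m_\ell + y)$ in the window $y \in [\ell^{1/3}, \ell^{2/3}]$; (ii) monotonicity estimates that control the contribution of $w \in [0, \ell^{1/3}]$; and (iii) the many-to-one lemma plus the Bessel tail~\eqref{eqn:tailBessel} that control the contribution of $w \in [\ell^{2/3}, \infty)$.

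To carry this out, I would first substitute $y = w + c + \tfrac{3}{2\sqrt{2}} \log \ell$, so that $u(\ell, \sqrt{2}\ell + w + c) = u(\ell, m_\ell + y)$, and then plug in the uniform tail asymptotic. A direct computation of the resulting main-window integral then produces the advertised constant $C_g e^{\sqrt{2}c}$. The constant $C_g$ is the Bramson tail constant attached to the initial condition $g$, arising from the asymptotic $w(y) \sim C_g\, y\, e^{-\sqrt{2} y}$ of the critical travelling wave as $y \to \infty$, which is identified via the McKean representation (Proposition~\ref{prop:mckean}) applied to $f = 1 - g$.

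Next, I would show the tail contributions are $o(1)$. For $w \in [0, \ell^{1/3}]$, monotonicity of $u$ in its spatial argument together with the rough pointwise bound produced by the Bramson tail show that the short-range integral is negligible once multiplied by $\int_0^{\ell^{1/3}} w e^{\sqrt{2} w}\,\d w$. For $w \in [\ell^{2/3}, \infty)$, the many-to-one lemma (Lemma~\ref{lem:many-to-one}) combined with~\eqref{eqn:tailBessel} yields $u(\ell, \sqrt{2}\ell + w + c) \leq e^{\ell}\, \P(R_\ell \geq \sqrt{2}\ell + w + c) \lesssim \ell^{(d-2)/2} e^{-\sqrt{2} w - (w+c)^2/(2\ell)}$, and the Gaussian cutoff $e^{-(w+c)^2/(2\ell)}$ eventually overwhelms the weight $w e^{\sqrt{2} w}$ once $w$ is of order $\sqrt{\ell}$ or larger, so this contribution vanishes as $\ell \to \infty$.

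The main obstacle is ingredient~(i), the uniform tail asymptotic. Bramson's theorem as originally stated is only pointwise in $y$, so lifting it to uniform convergence over a window of length $\ell^{2/3}$ requires truncated-second-moment estimates, barrier arguments, and careful coupling to the linearised F-KPP equation; this technical work is exactly the content of the cited results of~\cite{ABK13}, and is analogous in spirit to the radial trajectory analysis recalled in Section~\ref{subsec:trajectories}. Once the uniform asymptotic is in hand, the remainder of the proof is a short calculation: the main-window integral produces $C_g e^{\sqrt{2}c}$, and the two tail cut-offs contribute $o(1)$.
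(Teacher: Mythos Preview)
The paper does not give an independent proof of this proposition; it simply assembles three results from~\cite{ABK13}: Proposition~3.2 there furnishes convergence of the full integral over $[0,\infty)$ to a finite constant $C_g$, Lemma~4.6 shows the mass concentrates on $[\ell^{1/2-\delta},\ell^{1/2+\delta}]$ for any $\delta\in(0,\tfrac12)$, and Lemma~4.8 handles the spatial shift by $c$, producing the factor $e^{\sqrt{2}c}$. Your proposal attempts a more hands-on reconstruction, which is a legitimate alternative, but two of your three ingredients have gaps as written.

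Your step~(ii) does not work: you propose to control $\int_0^{\ell^{1/3}} w\, e^{\sqrt{2}w}\, u(\ell,\sqrt{2}\ell+w+c)\,\d w$ by pulling out a uniform bound on $u$ and multiplying by $\int_0^{\ell^{1/3}} w\, e^{\sqrt{2}w}\,\d w$. But the latter integral is of order $e^{\sqrt{2}\,\ell^{1/3}}$, whereas any uniform bound on $u$ over this range is only polynomially small in $\ell$; these do not compensate. What is actually needed is the pointwise Bramson upper bound $u(\ell,\sqrt{2}\ell+w)\lesssim \ell^{-3/2}(1+w)e^{-\sqrt{2}w}$ used \emph{inside} the integral, so that the exponential factors cancel and one is left with $\ell^{-3/2}\int_0^{\ell^{1/3}} w(1+w)\,\d w=O(\ell^{-1/2})\to 0$. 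Note also that $u(t,\cdot)$ need not be monotone for general $g$; only its Heaviside-data comparison is. In step~(iii) you invoke the $d$-dimensional Bessel tail~\eqref{eqn:tailBessel} and write the prefactor $\ell^{(d-2)/2}$, but this proposition concerns the one-dimensional F-KPP equation: the McKean representation~\eqref{eqn:full-mckean} uses one-dimensional BBM, and the correct crude bound is $u(\ell,x)\leq e^\ell\,\P(W_\ell>x-M)$ with $M=\sup\{y:g(y)>0\}$, via a one-dimensional Gaussian tail. The Gaussian cutoff then does suffice exactly as you say, but the reference to Bessel processes and to the ambient dimension $d$ is misplaced.
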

We make explicit how the Proposition \ref{prop:abk-tailconvergence}
follows from~\cite{ABK13}. Equation~$(3.3)$ of Proposition~$3.2$
of~\cite{ABK13}
tells us that the following limit exists:
\begin{align}
    C_g := \lim_{\ell \to \infty}  \int_{0}^{\infty} we^{\sqrt{2}w} u(\ell, \sqrt{2}\ell +w) \d w ~\in (0, \infty)\,.
    \label{def:Cf}
\end{align}
Lemma~$4.6$ of~\cite{ABK13} tells us that $C_g$ is equal to the limit as $\ell$ tends to infinity of the above integral taken only over $[\ell^{1/2-\delta}, \ell^{1/2+\delta}]$, for any $\delta \in~(0,1/2)$. The shift by $c$ resulting in the $e^{\sqrt{2}c}$ pre-factor is now a direct consequence of Lemma~$4.8$ of~\cite{ABK13}.

An important application of McKean's theorem is to the case $u(0,x) = \ind{x <0}$. Letting $W_{\ell}^* := \max_{v \in N_{\ell}} W_{\ell}^{(v)}$,
 Proposition~\ref{prop:mckean} states that
$\P(W_\ell^* >x)$ solves the F-KPP equation, whence Proposition~\ref{prop:abk-tailconvergence} yields the following positive constant:
\begin{align}
    \gamma := C_{\ind{x <0}}=  \lim_{\ell \to \infty} \int_0^{\infty} w e^{\sqrt{2}w} \P(W_{\ell}^*>\sqrt{2}\ell+ w) \d w \,. \label{def:gamma}
\end{align}
This constant appears in the limiting law of the re-centered maximum of BBM in every dimension.
In~\cite{LS87}, Lalley and Sellke showed that there exists some positive constant $C>0$
\[
\lim_{t\to\infty} \P(W_t^* -m_t^{(1)} \leq y) =
\E\big[\exp \big(-C Z^{(1)} e^{-y\sqrt{2}} \big)\big]\,,
\]
where $Z^{(1)}$ denotes the derivative martingale from one-dimensional BBM. (The constant $C$ was identified as $C =  \gamma \sqrt{2/\pi}$, see
\cite{ABK13}.) In~\cite{KLZ21}, the main result (Theorem 1) states that
\[
\P(R_t^* -m_t^{(d)} \leq y) = \E\Big[\exp \Big(- \sqrt{\tfrac{2^{1+\alpha_d}}{\pi}} \gamma Z_{\infty} e^{-y\sqrt{2}} \Big)\Big] \,.
\]
In contrast to the above, in~\cite[Theorem~1]{KLZ21}, the constant in front of $Z_{\infty}$ is not written out explicitly. Instead it is just called ``$\gamma^*$''. The following expression for $\gamma^*$ is given by~\cite[Lemma~$5.1$ and Proposition~$5.4$]{KLZ21}:
\begin{align}
\gamma^* = \sqrt{\tfrac{2^{1+\alpha_d}}{\pi}} \lim_{\ell \to \infty} \int_{\ell^{1/3}}^{\ell^{2/3}} w e^{\sqrt{2}w} \P(W_{\ell}^*>\sqrt{2} \ell + w) \d w \, = \sqrt{\tfrac{2^{1+\alpha_d}}{\pi}} \gamma . \label{def:gamma-star}
\end{align}
This is shown to be a positive constant in~\cite[Section~5.4]{KLZ21} using purely probabilistic methods, without reference to the F-KPP equation. Proposition~\ref{prop:abk-tailconvergence} tells us that the above limit actually equals $\gamma$.

We will often consider the solution $u_\phi$ to the F-KPP equation with initial conditions $u(0,x) = 1- e^{-\phi(-x)}$, for a non-negative, compactly supported function $\phi : \R \to \R_{+}$.
Proposition~\ref{prop:mckean} states that, for any $w \in \R$,
\begin{align}
u_\phi(\ell, \sqrt{2}\ell+w) =\E\Big[ 1- \exp \Big(-\sum_{u \in \cN_{\ell}} \phi(W_{\ell}^{(u)}- \sqrt{2}\ell-w) \Big)\Big]
\,. \label{eqn:mckean}
\end{align}
Furthermore, Proposition~\ref{prop:abk-tailconvergence} states that
the following limit exists, and is positive and finite:
\begin{align}
    C(\phi) := \lim_{\ell \to \infty}  \int_{0}^{\infty} we^{\sqrt{w}} \E\Big[ 1- \exp \Big(-\sum_{u \in \cN_{\ell}} \phi(W_{\ell}^{(u)}- \sqrt{2}\ell-w) \Big)\Big] \d w  \,.
    \label{def:C(phi)}
\end{align}
The constant $C(\phi)$ appears numerous times in the sequel.

\section{Proof of Theorem~\ref{lem:equiv-of-deriv-mtgs} (equivalence of \texorpdfstring{$Z_{\infty}$}{Zinfty} and \texorpdfstring{$D_{\infty}$}{Dinfty})}
\label{sec:proof-equiv-deriv-mtgs}

We prove Theorem~\ref{lem:equiv-of-deriv-mtgs} by identifying the main contribution to $D_\infty$, and showing that it coincides with the main contribution of $Z_\infty$. Towards this end, in the following lemma, for any $L,y >0$ and $\psi \in \S^{d-1}$, we estimate the contribution of a particle located at $r \psi \in \R^d$ to $\crochet{D_L, f(\cdot)}$.

\begin{lemma}
\label{lem:analysisTransform}
Let $f : \S^{d-1} \to (0,\infty)$ be a continuous function and fix constants $K>0$ and $\epsilon > 0$. Then, uniformly in $\psi \in \S^{d-1}$ and $y \in [\epsilon L, \sqrt{2}L - K \log L]$, we have that
\begin{align}
  \nonumber
&\int_{\S^{d-1}} f(\theta) (\sqrt{2} L - y \psi \cdot \theta) e^{\sqrt{2} (y \psi \cdot \theta - \sqrt{2} L)} \sigma(\d \theta) \\
&\quad \sim_{L \to \infty}
  (2 \pi)^{(d-1)/4} f(\psi) y^{-\alpha_d} (\sqrt{2}L - y) e^{\sqrt{2}(y-\sqrt{2}L)}\,.
  \label{eqn:laplacemeth-final-good}
\end{align}
Furthermore, there exists $C_{d} > 0$ such that for all $L$ large enough and $y \in [\sqrt{2}L-K \log L, \sqrt{2}L+K\log L]$, we have that
\begin{equation}
    \abs{\int_{\S^{d-1}} (\sqrt{2} L -y \psi \cdot \theta) e^{\sqrt{2} (y \psi \cdot \theta - \sqrt{2} L)} \sigma(\d\theta)} \leq C_{d} (\log L) L^{-\alpha_d} e^{\sqrt{2}(y-\sqrt{2}L)}\,.
    \label{eqn:laplacemeth-bad-x}
\end{equation}
\end{lemma}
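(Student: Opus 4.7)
\medskip

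\noindent\textbf{Proof proposal.} The plan is a Laplace (saddle-point) analysis on $\S^{d-1}$ around the peak at $\theta = \psi$, where the factor $e^{\sqrt{2}x\,\psi\cdot\theta}$ attains its maximum while the other factors vary slowly.

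By rotational invariance I may assume $\psi = e_1$. Parametrize $\theta = (\cos\phi, \sin\phi\cdot\omega)$ with $\phi\in[0,\pi]$ and $\omega\in\S^{d-2}$, so that $\psi\cdot\theta = \cos\phi$ and $\sigma(\d\theta) = \sin^{d-2}\phi\,\d\phi\,\sigma_{\S^{d-2}}(\d\omega)$. Fix a cutoff $\delta_L := L^{-1/2+\eta}$ for a small $\eta\in(0,1/4)$ and split the $\phi$-integral at $\delta_L$. On the far region $\phi\in[\delta_L,\pi]$, the bound $\cos\phi \leq 1 - \delta_L^2/4$ together with the trivial estimate $|\sqrt{2}L - x\cos\phi| \leq 2\sqrt{2}L$ shows that this region contributes at most a constant times $L\cdot e^{-\sqrt{2}x\delta_L^2/4}\cdot e^{\sqrt{2}(x-\sqrt{2}L)}$; since $x\delta_L^2 \gtrsim L^{2\eta}\to\infty$ (using $x\geq \epsilon L$, resp.\ $x \geq L/2$ for large $L$), this is exponentially smaller than the right-hand side of either claim, uniformly in $\psi$.

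On the main region $\phi\in[0,\delta_L]$, Taylor-expand $\cos\phi = 1 - \phi^2/2 + O(\phi^4)$, $\sin^{d-2}\phi = \phi^{d-2}(1+O(\phi^2))$, and use continuity of $f$ to write $f(\cos\phi, \sin\phi\,\omega) = f(\psi) + o(1)$ uniformly in $\omega\in\S^{d-2}$. The substitution $u = \phi\sqrt{\sqrt{2}x}$ reduces the angular integral to
\begin{equation*}
f(\psi)\,|\S^{d-2}|\,(\sqrt{2}x)^{-(d-1)/2} \int_0^{\delta_L\sqrt{\sqrt{2}x}} u^{d-2} e^{-u^2/2}\,\d u \cdot (1+o(1))\,,
\end{equation*}
and since $\delta_L\sqrt{x}\to\infty$, the truncated Gaussian tends to $2^{(d-3)/2}\Gamma(\alpha_d)$; combined with $|\S^{d-2}| = 2\pi^{\alpha_d}/\Gamma(\alpha_d)$ this yields the $x^{-\alpha_d}$ scaling with the stated dimensional constant. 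For the first claim, the prefactor $\sqrt{2}L - x\cos\phi = (\sqrt{2}L - x) + x\phi^2/2 + O(x\phi^4)$ decomposes into a main piece proportional to $\sqrt{2}L - x$ plus a remainder; the remainder integrates to $O(x^{-\alpha_d})$, which is $o((\sqrt{2}L - x)x^{-\alpha_d})$ since $\sqrt{2}L - x \geq K\log L \to \infty$, giving the claimed $\sim$. For the second claim (the range $|\sqrt{2}L - x|\leq K\log L$), the absolute bound $|\sqrt{2}L - x\cos\phi| \leq K\log L + O(x\phi^2)$ combined with the same Gaussian computation produces the desired bound $C_d(\log L)L^{-\alpha_d}e^{\sqrt{2}(x-\sqrt{2}L)}$.

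The main technical point is the \emph{uniformity} in $\psi\in\S^{d-1}$ and $x$. The choice $\delta_L = L^{-1/2+\eta}$ simultaneously guarantees $x\delta_L^4 \lesssim L^{-1+4\eta}\to 0$ (so the $O(\phi^4)$ correction in the exponent is negligible uniformly in $x$) and $\delta_L\sqrt{x}\to\infty$ (so the truncated Gaussian reaches its full value uniformly); uniform continuity of $f$ on the compact sphere handles the $f(\psi)$ replacement. None of the individual steps is conceptually deep---the work is in carefully combining the error terms in the two Taylor expansions, the prefactor linearization, and the cap/Gaussian truncation so that they combine to an $o(1)$ relative error across the stated ranges of $x$ and $\psi$.
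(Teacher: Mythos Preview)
Your proposal is correct and follows essentially the same Laplace-method approach as the paper: split the sphere into a cap around $\psi$ and its complement, bound the complement exponentially, and on the cap Taylor-expand $\cos\phi$ and $\sin^{d-2}\phi$ and decompose the prefactor $\sqrt{2}L-x\cos\phi$ into $(\sqrt{2}L-x)$ plus a lower-order piece. The only cosmetic difference is that you take a shrinking cap radius $\delta_L=L^{-1/2+\eta}$ and perform an explicit Gaussian substitution, whereas the paper fixes $\delta$ and quotes the Laplace asymptotic $\int_0^\delta \phi^k e^{\sqrt{2}x\cos\phi}\,\d\phi\sim \tfrac{1}{2}e^{\sqrt{2}x}(\sqrt{2}/x)^{(k+1)/2}\Gamma((k+1)/2)$; both routes are standard and lead to the same conclusion.
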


\begin{proof}
For $\psi \in \S^{d-1}$ and $\delta > 0$, define the subset $B(\psi,\delta) = \{ \theta \in \S^{d-1} : \psi\cdot\theta > \cos(\delta) \} \subset \S^{d-1}$. We first bound
\begin{align*}
  &\left|\int_{\S^{d-1} \setminus B(\psi,\delta)} f(\theta) (\sqrt{2} L - y \psi \cdot \theta) e^{\sqrt{2} y \psi \cdot \theta} \sigma(\d \theta)\right|
  \\
  &\quad \leq~\|f\|_\infty \int_{\S^{d-1} \setminus B(\psi,\delta)} |\sqrt{2} L - y\psi \cdot \theta| e^{\sqrt{2} y \psi \cdot \theta} \sigma(\d \theta)\\
  &\quad =~ \mathrm{vol}(\S^{d-2}) \|f\|_\infty \int_{\delta}^\pi |\sqrt{2} L - y\cos(\phi)| \sin(\phi)^{d-2} e^{\sqrt{2} y\cos(\phi)} \dd \phi \,,
\end{align*}
where we used the change of variables $\theta \mapsto (\phi, \vec{z}) \in [0,\pi] \times \S^{d-2}$, so that  the sphere $\S^{d-1}$ is parametrized by $R_{\psi}(\cos \phi, \vec{z} \sin \phi)$, where $R_\psi$ is a fixed rotation sending $e_1$ to $\psi$.
Therefore, for all $\delta$ small enough that $\cos(\delta) \leq 1 - \delta^2/4$, there exists a constant $K_d>0$ such that for all $L$ large enough and $y \geq \epsilon L$, we have
\begin{equation}
  \label{eqn:farContrib}
  \left|\int_{\S^{d-1} \setminus B(\psi,\delta)} f(\theta) (\sqrt{2} L -y \psi \cdot \theta) e^{\sqrt{2} y \psi \cdot \theta - \sqrt{2}L} \sigma(\d \theta)\right| \leq  K_d  \|f\|_\infty L e^{\sqrt{2}(y - \sqrt{2}L) -  \sqrt{2} \epsilon \delta^2 L/4}\,.
\end{equation}
Note that the right-hand side of~\eqref{eqn:farContrib} is dominated by the right-hand of~\eqref{eqn:laplacemeth-final-good}. In particular, as $L$ becomes large, the mass of $\int_{\S^{d-1}} f(\theta) (\sqrt{2} L - y \psi \cdot \theta) e^{\sqrt{2} y \psi \cdot \theta} \sigma(\d \theta)$ concentrates on $B(\psi,\delta)$--- we show this now.

The continuous function $f$ on the compact space $\S^{d-1}$ is uniformly continuous; hence, for all $\eta> 0$, there is $\delta=\delta(\eta)$ small enough so that
\begin{align}
  \label{eqn:unifContinu}
  &(f(\psi) - \eta) \int_{B(\psi,\delta)} (\sqrt{2} L - y \psi \cdot \theta) e^{\sqrt{2} y \psi \cdot \theta} \sigma(\d \theta)\\
  &\qquad\leq \int_{B(\psi,\delta)} f(\theta) (\sqrt{2} L - y \psi \cdot \theta) e^{\sqrt{2}y \psi \cdot \theta} \sigma(\d \theta)\nonumber\\ &\qquad
  \leq (f(\psi) + \eta) \int_{B(\psi,\delta)} (\sqrt{2} L - y \psi \cdot \theta) e^{\sqrt{2} y \psi \cdot \theta} \sigma(\d \theta) \nonumber\,.
\end{align}
Therefore, to complete the proof, it is enough to compute the asymptotic behaviour for large $y$ and $L$ of
\begin{align*}
  I_{d,\delta}(L,y) &= \int_{B(\psi,\delta)} (\sqrt{2} L - y \psi \cdot \theta) e^{\sqrt{2} y \psi \cdot \theta} \sigma(\d \theta)\\
  &= \mathrm{vol}(\S^{d-2})\int_0^\delta (\sqrt{2} L - y\cos(\phi)) \sin(\phi)^{d-2} e^{\sqrt{2} y \cos(\phi)} \dd \phi \,,
\end{align*}
which is done using Laplace's method, as follows.
Let $\eta > 0$, and fix $C_d>0$ large enough and $\delta=\delta(\eta) > 0$ small enough such that for all $0 \leq \phi \leq \delta$, we have
\[
  1 - \phi^2/2 \leq \cos(\phi) \leq 1 - (1 - \eta)\phi^2/2 \text{ and } \phi^{d-2} - C_d \phi^{d} \leq \sin(\phi)^{d-2} \leq \phi^{d-2}\,.
\]
With this notation, we observe that
\begin{multline*}
  \left| I_{d,\delta}(L,y) - \mathrm{vol}(\S^{d-2})(\sqrt{2} L - y) \int_0^\delta \phi^{d-2} e^{\sqrt{2} y \cos(\phi)} \dd \phi \right|\\
  \leq (\sqrt{2}L+y)(2 C_d +1/2) \int_0^\delta \phi^d e^{\sqrt{2} y \cos(\phi)} \dd \phi\,.
\end{multline*}
From Laplace's method, we have that for all $k \geq 0$,
\[
  \int_0^\delta \phi^k e^{\sqrt{2}y \cos(\phi)} \dd \phi \sim_{y \to \infty} e^{\sqrt{2}y}  \frac{1}{2} \left( \frac{\sqrt{2}}{y}\right)^{(k+1)/2} \Gamma((k+1)/2)\,.
\]
As a result, uniformly over $\epsilon L \leq y\leq \sqrt{2} L - K \log L$, we have
\[
  (\sqrt{2} L - y) \int_0^\delta \phi^{d-2} e^{\sqrt{2} y \cos(\phi)} \dd \phi \gg (\sqrt{2}L+y)\int_0^\delta \phi^d e^{\sqrt{2}y \cos(\phi)} \dd \phi \,.
\]
Thus, for any fixed $K>0$ and uniformly in $y \in [\epsilon L, \sqrt{2} L - K \log L]$, we have
\begin{align*}
  I_{d,\delta}(L,y) &\sim_{L\to \infty} \mathrm{vol}(\S^{d-2})(\sqrt{2} L - y)  e^{\sqrt{2}y}  \frac{\Gamma((d-1)/2)}{2} \left( \frac{\sqrt{2}}{y}\right)^{(d-1)/2}\\
  &= (2\pi)^{(d-1)/4} y^{-(d+1)/2} (\sqrt{2}L-y) e^{\sqrt{2}y}\,.
\end{align*}
Similarly, if $y \in [\sqrt{2}L - K \log L, \sqrt{2} L + K \log L]$, we find that for some constant $C_{d} >0$,
\begin{equation}
  \label{eqn:upperbound}
  I_{d,\delta}(L,y) \leq C_d (\log L) L^{-(d-1)/2} e^{\sqrt{2}y}\,.
\end{equation}

Finally, using \eqref{eqn:farContrib} and \eqref{eqn:unifContinu}, we obtain that
\begin{align*}
 & \int_{\S^{d-1}} f(\theta) (\sqrt{2} L - y \psi \cdot \theta) e^{\sqrt{2}(y \psi \cdot \theta - \sqrt{2}L)} \sigma(\dd \theta) \sim_{L \to \infty} f(\psi) I_{d,\delta}(L,y) e^{-2 L}\\
  &\qquad\qquad\qquad \sim_{L \to \infty} f(\psi) (2\pi)^{(d-1)/4} y^{-(d+1)/2} (\sqrt{2}L-y) e^{\sqrt{2}(y - \sqrt{2}L)} \,,
\end{align*}
uniformly in $\psi \in \S^{d-1}$ and $y \in [\epsilon L, \sqrt{2} L - K \log L]$. The upper bound for $y \in [\sqrt{2} L - K \log L, \sqrt{2} L + K \log L]$ is obtained using \eqref{eqn:upperbound}
and \eqref{eqn:unifContinu}.
\end{proof}

Next, we show that particles that are not in the window $I_L^{\win}$ (defined in Section~\ref{subsec:trajectories}) at time $L$ do not contribute to the left-hand side of~\eqref{eqn:equiv-of-deriv-mtgs}.
\begin{lemma} \label{lem:non-window}
For any $x \in \R^d$,
\begin{align*}
  \lim_{L \to \infty} \sum_{u \in \mathcal{N}_L}
  \ind{R_L^{(u)} \not \in I_L^{\win}} (R_L^{(u)})^{-\alpha_d}  (1+|\sqrt{2}L - R_L^{(u)}|)  e^{-(\sqrt{2}L - R_L^{(u)}) \sqrt{2}}  = 0 \text{ in $\P_x$-probability.}
\end{align*}

\begin{proof}
For economy of notation, we prove this lemma for $x =0$. It will be abundantly clear that the same proof applies to an arbitrary $x$.

Let $Z_L'$ denote the expression on the left-hand side of the above display, and fix $\ep >0$. From a union bound, the many-to-one lemma (Lemma~\ref{lem:many-to-one}), the Bessel density at time $L$ \eqref{eqn:bessel-density}, and standard estimates of Gaussian integrals, it follows that there exists a constant  $K_d > 0$ depending only on the dimension $d$ such that
\begin{align}
    \P ( \exists u \in \cN_L : R_L^{(u)} > \sqrt{2}L + K_d \log L )& \leq
    \E \Big[ \sum_{u \in \cN_L} \ind{R_L^{(u)} > \sqrt{2}L + K_d \log L} \Big] \nonumber \\
    &= e^L \P( R_L > \sqrt{2}L +K_d \log L )
    \lesssim L^{-1/2}
    \label{eqn:parabola-ub} \,.
\end{align}
For brevity, let us write $B(L) := \sqrt{2}L+ K_d\log L$.  Then, using the Markov inequality and the many-to-one lemma (Lemma~\ref{lem:many-to-one}), we have
\begin{align*}
&\P( | Z_L' | > \ep ) \\
&\leq \      \ep^{-1}e^L \E \Big[ \ind{R_L \not \in I_L^{\win}, R_L \in [0,B(L)]}(R_L)^{-\alpha_d} (1+|\sqrt{2}L - R_L|) e^{-(\sqrt{2}L - R_L) \sqrt{2}}  \Big]
+o_L(1)\,,
\end{align*}
 where the  $o_L(1)$ term comes from~\eqref{eqn:parabola-ub}.
We now integrate over the density $p_L^R(0,\cdot)$ of $R_L$, so that the last display is given by
\begin{align}
    &\ep^{-1}e^L \int_{I} \frac{p_L^R(0,B(L)-w)}{(B(L)-w)^{\alpha_d}} (1+\abs{w- K_d \log L}) e^{-\sqrt{2}(w- K_d\log L)} \d w  +o_L(1) \nonumber \\
    &= c_d L^{-\frac{d}2} \ep^{-1} \int_{I} (B(L)-w)^{\alpha_d} (1+|w-K_d \log L|) e^{- \frac{(K_d \log L -w)^2}{2L}}\dd w\, + o_L(1)\,,
    \label{eqn:non-window-besseldensity}
\end{align}
where
\[
I:= [0,B(L)]\setminus [L^{1/6}+K_d \log L, L^{2/3}+K_d \log L]\,.
\]
The Gaussian term in the right-hand side of  \eqref{eqn:non-window-besseldensity} ensures that the latter is dominated by the integral over the interval $[0,L^{1/6}+K_d\log L]$. The integral over this interval (including the $c_dL^{-d/2} \ep^{-1}$ pre-factor) is bounded by a constant times $\ep^{-1} L^{-1/2} L^{1/6}L^{1/6} = o_L(1)$. Thus, we have shown that for any $\ep >0$, $\P(|Z_L'| > \ep)$ tends to $0$ as $L$ tends to $\infty$, which concludes the proof.
\end{proof}
\end{lemma}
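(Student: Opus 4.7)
The plan is to use the first moment method. Writing $Y_L$ for the random variable on the left-hand side, I would bound $\P(Y_L > \epsilon)$ via Markov's inequality and apply the many-to-one lemma (Lemma~\ref{lem:many-to-one}) to reduce everything to an explicit Bessel-density computation. A small preliminary step is to discard particles that overshoot: by Markov's inequality, the many-to-one lemma and the Gaussian tail of the Bessel density \eqref{eqn:tailBessel} (exactly as in \eqref{eqn:parabola-ub}), there is a constant $K_d$ with
\[
\P\bigl(\exists u \in \cN_L:\ R_L^{(u)} > \sqrt{2}L + K_d \log L\bigr) = o_L(1),
\]
so it is enough to control the expectation of $Y_L$ on the complementary event.

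The key algebraic observation is that the many-to-one factor $e^L$, the Bessel density $p_L^R(0,x) = c_d L^{-d/2} x^{d-1} e^{-x^2/(2L)}$, and the exponential $e^{-\sqrt{2}(\sqrt{2}L - x)}$ combine cleanly: writing $w = \sqrt{2}L - x$, the exponent simplifies to $-w^2/(2L)$, because $-x^2/(2L) + L + \sqrt{2}(x - \sqrt{2}L)$ equals $-(x - \sqrt{2}L)^2/(2L)$. After this cancellation, Markov plus the many-to-one lemma gives
\[
\epsilon \P(Y_L > \epsilon) \leq c_d L^{-d/2} \int_{[-K_d\log L,\, \sqrt{2}L] \setminus [L^{1/6}, L^{2/3}]} (\sqrt{2}L - w)^{\alpha_d} (1 + |w|) e^{-w^2/(2L)} \dd w + o_L(1).
\]

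From here I would split the remaining range of $w$ into three pieces and show each contributes $o_L(1)$. On $[L^{2/3}, \sqrt{2}L]$ the Gaussian factor is at most $e^{-L^{1/3}/2}$, which crushes the polynomial prefactor $L^{d/2}\cdot L$ and yields $o_L(1)$. On $[-K_d\log L, 0]$, both $|w|$ and $\sqrt{2}L - w$ are at most $\sqrt{2}L(1 + o(1))$, so the Gaussian being bounded by $1$ gives a contribution of order $L^{-d/2}\cdot L^{\alpha_d} \cdot (\log L)^2 = O(L^{-1/2}(\log L)^2)$. The tightest piece is $[0, L^{1/6}]$: bounding $(\sqrt{2}L - w)^{\alpha_d}\lesssim L^{\alpha_d}$, $1 + |w| \lesssim L^{1/6}$, and $e^{-w^2/(2L)} \leq 1$, the integral is at most $L^{-d/2} \cdot L^{\alpha_d} \cdot L^{1/6} \cdot L^{1/6} = L^{-1/6}$, which still tends to $0$.

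There is no genuine obstacle here, but the tightest piece $w \in [0, L^{1/6}]$ is where the margin is smallest: the exponents $1/6$ are chosen just narrowly enough that the bound $L^{-1/2 + 1/3} = L^{-1/6}$ survives. This is the only place where one must be slightly careful about the interaction between the window width, the polynomial $(1+|w|)$ factor, and the $L^{-d/2} (\sqrt{2}L - w)^{\alpha_d}$ prefactor from the Bessel density; once those powers balance favorably the proof is essentially a routine three-region split.
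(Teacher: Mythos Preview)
Your proposal is correct and follows essentially the same route as the paper: discard the overshoot event, apply Markov plus many-to-one, complete the square in the exponent, and bound the resulting integral, with the dominant contribution coming from $w \in [0,L^{1/6}]$ and yielding $L^{-1/6}$. Your change of variable $w = \sqrt{2}L - x$ is slightly cleaner than the paper's $w = B(L) - x$, and your explicit three-region split spells out what the paper compresses into one sentence, but the argument is the same.
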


\begin{proof}[Proof of Theorem~\ref{lem:equiv-of-deriv-mtgs}]
In this proof, the term ``with high probability'' means ``with
$\P_x$-probability approaching $1$ as $L\to\infty$''. Recall from~\eqref{eqn:sbm-convergence} that
\begin{align}
\sum_{u \in \calN_L} \int_{\sd} f(\theta) (\sqrt{2}L - \theta \cdot X_L^{(u)}) e^{\sqrt{2}\theta \cdot X_L^{(u)} - 2L} \sigma(\d\theta)
\xrightarrow{\P_x\text{-a.s.}} \langle f, D_{\infty} \rangle \,.
\label{eqn:equivalence-sbm-result}
\end{align}
We proceed by restricting the locations of the contributing particles at time $L$. To start, by \eqref{eqn:parabola-ub} with high probability, all norms of particles at time $L$ are bounded by $\sqrt{2}L + K_d \log L$, for some large, positive constant $K_d$. Furthermore, we can show that
\begin{align}
    \bigg| \E_x \bigg[ \sum_{u \in \calN_L} \ind{R_L^{(u)} \leq \sqrt{2}L-L^{2/3}} \int_{\S^{d-1}} f(\theta) (\sqrt{2}L - \theta \cdot X_L^{(u)}) e^{\sqrt{2}\theta \cdot X_L^{(u)} - 2L}
\sigma(\d\theta)\bigg] \bigg|
    \lesssim L^{1/6} e^{-\frac{L^{1/3}}{2}} \,.
    \label{eqn:equivalence-below-2/3}
\end{align}
Indeed, \eqref{eqn:equivalence-below-2/3} follows
by applying
the triangle inequality and the many-to-one lemma to its left-hand side, yielding an upper-bound of
\begin{align}
\label{eq-oldnonumber}
     e^L \norm{f}_{\infty} \int_{\S^{d-1}} \E \Big[ \big| \sqrt{2}L - (W_L+\theta\cdot x)\big | e^{- \sqrt{2}(\sqrt{2}L - (W_L+\theta\cdot x))} \ind{W_L+\theta\cdot x \leq \sqrt{2}L-L^{2/3}} \sigma(\d \theta) \Big]
    \,,
\end{align}
where $W_L$ denotes a standard one-dimensional Brownian motion at time $L$, and we have used the fact that $\theta \cdot (X_L^{(u)}-x)$ is equal to $W_L$ in distribution for any $\theta$ and $u$.
Taking $\tilde{W}_L := \sqrt{2}L - W_L$ and applying the Girsanov transform gives that \eqref{eq-oldnonumber} equals
\[
\|f\|_{\infty} \int_{\S^{d-1}} \E \Big[ |W_L-\theta\cdot x| \ind{W_L-\theta\cdot x \geq L^{2/3}}\Big] e^{\sqrt{2}\theta\cdot x}\sigma(\d \theta)\,,
\]
 from which~\eqref{eqn:equivalence-below-2/3} follows.

We can further restrict the locations of the $R_L^{(u)}$ by observing that
\begin{align*}
    \sum_{u \in \calN_L} \ind{R_L^{(u)} \in [\sqrt{2} L- K_d \log L, \sqrt{2} L + K_d \log L]} \int_{\sd} f(\theta) (\sqrt{2}L - \theta\cdot X_L^{(u)}) e^{\sqrt{2}\theta\cdot X_L^{(u)} - 2L}
\sigma(\d\theta)
\end{align*}
converges to $0$ in $\P_x$-probability due to the triangle inequality, the
upper bound~\eqref{eqn:laplacemeth-bad-x},
and Lemma~\ref{lem:non-window}. Together with~\eqref{eqn:equivalence-below-2/3} and~\eqref{eqn:parabola-ub}, we have thus far shown that
\begin{align*}
    &\langle D_L',f \rangle := \sum_{u \in \calN_L} \ind{R_L^{(u)} \in [\sqrt{2}L-L^{2/3}, \sqrt{2}L - K_d \log L] }
    \int_{\sd} f(\theta) (\sqrt{2}L - \theta\cdot X_L^{(u)}) e^{\sqrt{2}\theta\cdot X_L^{(u)} - 2L}\sigma(\d\theta)
\end{align*}
converges to $\langle D_\infty, f \rangle$ in probability as $L$ tends to infinity.
The uniform asymptotic in equation~\eqref{eqn:laplacemeth-final-good} of Lemma~\ref{lem:analysisTransform} shows that the above expression is equal to
\begin{align*}
    (1+o(1)) (2 \pi)^{(d-1)/4} \sum_{u \in \calN_L} f(\theta_L^{(u)}) \mathfrak{M}_L^{(u)}
    \ind{R_L^{(u)} \in [\sqrt{2}L-L^{2/3}, \sqrt{2}L - K_d \log L]}\,.
\end{align*}
Now, $\norm{f}_{\infty} <\infty$, and Lemma~\ref{lem:non-window} shows that
\begin{align*}
    \lim_{L\to\infty}\sum_{u \in \cN_L} \mathfrak{M}_L^{(u)}
    \ind{R_L^{(u)} \not \in I_L^{\win} } = 0 \text{ in $\P_x$-probability.} 
\end{align*}
This concludes the proof.
\end{proof}

\section{Proof of Theorem~\ref{thm:main}, Proposition~\ref{prop:Lalley and Sellke} and Proposition~\ref{prop:laplaceFunctional}}
\label{sec:proof-laplace-functional}

Proposition~\ref{prop:laplaceFunctional} will follow from Theorem~\ref{lem:equiv-of-deriv-mtgs} and the following estimate, which can be seen as a two-fold extension of~\cite[Theorem 3.2]{KLZ21}.

\begin{proposition}
\label{prop:tailEstimate}
Let $\phi: \S^{d-1}\times \R \to \R_{+}$ be a compactly supported, continuous function. For any $\theta \in \S^{d-1}$, define the (positive) constant
\begin{align}
    C_d(\phi_{\theta}) :=  \sqrt{\frac{2^{1+\alpha_d}}{\pi}}  C(\phi_{\theta})\,,
    \label{def:C_d}
\end{align}
where $\phi_{\theta}(y) := \phi(\theta,y)$, and  $C(\phi_{\theta})$ is defined in~\eqref{def:C(phi)}. Then
\begin{equation}
  \label{eqn:tailEstimate}
  \lim_{L \to \infty} \limsup_{t \to \infty} \sup_{\substack{z \in [L^{1/6},L^{2/3}],\\ \theta \in \sd }}\left|\frac{\E_{\theta (\sqrt{2}L-z)} \left[ 1 - \exp\left(-\crochet{\bar{\mathcal{E}}_{L,t},\phi} \right) \right]}{\mathfrak{M}_{L,z}} - C_d(\phi_{\theta})  \right| = 0,
\end{equation}
where
\[
  \bar{\mathcal{E}}_{L,t} := \sum_{u \in \mathcal{N}_{t-L}} \delta_{\big(\theta_{t-L}^{(u)},~R_{t-L}^{(u)} - m_t^{(d)} \big)} \quad \text{ and } \quad
  \mathfrak{M}_{L,z} := (\sqrt{2}L-z)^{-\alpha_d}z e^{-z\sqrt{2}}\,.
\]
\end{proposition}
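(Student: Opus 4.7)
This proposition generalises the tail estimate for the maximum $R^*_{t-L}$ from~\cite[Theorem~3.2]{KLZ21}, which corresponds to an indicator test function in the radial variable (limiting constant involving $\gamma$), to a general compactly-supported continuous $\phi$ (limiting constant involving $C(\phi_\theta)$). My plan is to follow the three-phase structure of that proof, the principal new ingredient being the F-KPP Laplace asymptotic provided by Proposition~\ref{prop:abk-tailconvergence}; the angular and trajectory reductions from Section~\ref{sec:lemmas} are re-used essentially verbatim.

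First, I would perform an \emph{angular localisation}: since $\phi$ is compactly supported, only particles $u$ with $|R^{(u)}_{t-L}-m_t^{(d)}|\leq K$ contribute, and an analogue of Proposition~\ref{prop:sector-result}, applied to the BBM started from $\theta(\sqrt{2}L-z)$ and evolved for time $t-L$ (with the initial point playing the role of the time-$L$ ancestor), yields $\|\theta^{(u)}_{t-L}-\theta\|\to 0$ for every contributing $u$, with probability $1-o_u(1)$. By uniform continuity of $\phi$, one may then substitute $\phi_\theta$ for $\phi(\theta^{(u)}_{t-L},\cdot)$ at a cost of a $(1+o_u(1))$-multiplicative error.

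Second, I would reduce to a one-dimensional problem by writing $X^{(u)}_s = Y^{(u)}_s\theta + \Xi^{(u)}_s$ with $Y^{(u)}:=X^{(u)}\cdot\theta$, so that under $\P_{\theta(\sqrt{2}L-z)}$, $Y^{(u)}$ is a 1-d BBM started from $\sqrt{2}L-z$ and $\Xi^{(u)}$ an independent $(d-1)$-dim BBM from $0$. Pythagoras gives
\[
R^{(u)}_{t-L} = Y^{(u)}_{t-L} + \tfrac{\|\Xi^{(u)}_{t-L}\|^2}{2\,Y^{(u)}_{t-L}} + O\Big(\tfrac{\|\Xi^{(u)}_{t-L}\|^4}{(Y^{(u)}_{t-L})^3}\Big),
\]
and on the good trajectory event where $Y^{(u)}_{t-L}\asymp\sqrt{2}t$ and $\|\Xi^{(u)}_{t-L}\|=O(\sqrt{t-L})$, the discrepancy $R^{(u)}_{t-L}-Y^{(u)}_{t-L}$ is bounded; integrating against the Gaussian density of $\Xi^{(u)}_{t-L}$ produces the Bessel-to-Brownian density ratio responsible for the $(\sqrt{2}L-z)^{-\alpha_d}$ factor in $\fM_{L,z}$. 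Combining Steps 1--2 then reduces the Laplace functional (up to $o_u(\fM_{L,z})$ error) to the 1-d F-KPP expression $u_{\phi_\theta}(t-L,\,m_t^{(d)}-(\sqrt{2}L-z))$ from Proposition~\ref{prop:mckean}, times the transverse prefactor.

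Third, performing the two-stage decomposition of~\cite{KLZ21} at an auxiliary scale $\ell'\to\infty$ (sent after $L$), the quantity of interest becomes an integral of the form $\int_{(\ell')^{1/3}}^{(\ell')^{2/3}} w e^{\sqrt{2}w}\, u_{\phi_\theta}(\ell',\sqrt{2}\ell'+w+c)\, dw$ with $c = c(t,L,z) = z + O(\log t)$, and Proposition~\ref{prop:abk-tailconvergence} identifies this as $C(\phi_\theta)\, e^{\sqrt{2}c}$ in the limit; tracking absolute constants then recovers $C_d(\phi_\theta)\fM_{L,z}$. I expect the hardest part to be the quantitative version of Step~2, where the radial-projection discrepancy must be controlled uniformly in $\theta\in\sd$ and $z\in[L^{1/6},L^{2/3}]$ sharply enough that the uniform-continuity error from Step~1 is genuinely $o_u(\fM_{L,z})$ and not merely $o(1)$. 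This necessitates combining a Bramson-barrier restriction on $Y^{(u)}$ with a Gaussian concentration bound on $\|\Xi^{(u)}_{t-L}\|$, which also pins the integration in Step~3 to the window $[(\ell')^{1/3},(\ell')^{2/3}]$ for which Proposition~\ref{prop:abk-tailconvergence} is applicable.
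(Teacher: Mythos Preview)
Your Step~1 (freezing the angle via Proposition~\ref{prop:sector-result}) and the final appeal to the F-KPP asymptotic Proposition~\ref{prop:abk-tailconvergence} match the paper. The route in between, however, diverges from the paper's and contains a gap.

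The paper does \emph{not} pass through the orthogonal decomposition $X=Y\theta+\Xi$. Instead it works with the radial (Bessel) process throughout and imports the KLZ21 machinery wholesale: it first \emph{linearises} the Laplace functional using the barrier/second-moment inputs (Propositions~\ref{prop:klz21-1stmom-results} and~\ref{prop:klz21-2ndmom}) to reduce to $\E_{\sqrt 2 L-z}\big[\sum_{v\in\cN_{\tll}}(1-e^{-\psi_\theta(v)})\indset{G_{L,t}(v)}\big]$; then applies many-to-one on $[0,\tll]$, Girsanov to turn the Bessel path into Brownian (this is where the factor $(\sqrt 2 L-z)^{-\alpha_d}$ actually appears, as the endpoint ratio $(\y(w)/\x(z))^{\alpha_d}$), and the ballot-type barrier estimate to produce the $ze^{-\sqrt 2 z}$ factor. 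Only on the \emph{short} final window $[\tll,\tl]$ does it couple Bessel with one-dimensional BBM (Corollary~\ref{cor:coupling}), where the drift $\alpha_d/R\cdot\ell=O(\ell/t)$ is genuinely negligible, after which the F-KPP constant $C(\phi_\theta)$ emerges.

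Your Step~2 tries to replace $R$ by $Y=X\cdot\theta$ at the level of the full Laplace functional, \emph{before} any linearisation. This does not work as stated: the Pythagoras discrepancy $R^{(u)}_{t-L}-Y^{(u)}_{t-L}=\|\Xi^{(u)}_{t-L}\|^2/(2Y^{(u)}_{t-L})+\cdots$ is a random $O(1)$ quantity (not $o(1)$), so uniform continuity of $\phi$ cannot absorb it; and the transverse parts $\Xi^{(u)}$ are \emph{correlated across particles} through the shared genealogy, so there is no way to ``integrate against the Gaussian density of $\Xi$'' inside $\exp(-\sum_u\cdots)$ before you have reduced to a single-particle (many-to-one) expression. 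In particular, the dimensional prefactor $(\sqrt 2 L-z)^{-\alpha_d}$ does not arise from integrating out $\Xi$; it comes from the Girsanov ratio in the paper's argument. A related symptom is your Step~3 identification $c=z+O(\log t)$: feeding that into Proposition~\ref{prop:abk-tailconvergence} would produce $e^{+\sqrt 2 z}$ rather than the required $e^{-\sqrt 2 z}$. In the paper the F-KPP shift is the bounded parameter $y$, while the $ze^{-\sqrt 2 z}$ dependence is generated entirely by the barrier probability on $[0,\tll]$, not by the F-KPP integral. To salvage your orthogonal-decomposition route you would first need the linearisation step (i.e.\ the analogues of Propositions~\ref{prop:klz21-1stmom-results}--\ref{prop:klz21-2ndmom}), and even then you would still need a short-time coupling on the last $\ell$ units to control the $O(1)$ radial correction for the \emph{branching} descendants; at that point you have essentially reconstructed the paper's proof.
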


Remark that when compared to Theorem~3.2 in~\cite{KLZ21},
Proposition~\ref{prop:tailEstimate} adds information
on
the direction of large particles.
It states that, if the initial particle is close to $\theta z$ for large $z \in \R_+$, then with high probability, the direction of the farthest particle from the origin will be in a small
neighbourhood of~$\theta$. Additionally, while the former result only deals with the tail of $R^*_{t-L}$, which can be seen as the Laplace functional computed with $\phi(x) = \infty \ind{x > 0}$, equation~\eqref{eqn:tailEstimate} extends  the Laplace functional to a broader class of functions.

In what follows, we show how Theorem~\ref{lem:equiv-of-deriv-mtgs} and Proposition~\ref{prop:tailEstimate} imply Proposition~\ref{prop:laplaceFunctional}.
We then use Proposition~\ref{prop:laplaceFunctional} to prove Theorem~\ref{thm:main}. The proof of Proposition~\ref{prop:tailEstimate} is postponed to Section~\ref{sec:proof-tail-estimate}.

\subsection{Proof of Proposition~\ref{prop:laplaceFunctional} using Proposition~\ref{prop:tailEstimate}}

\begin{proof}[Proof of Proposition~\ref{prop:laplaceFunctional}]
Fix $x \in \R^d$ and some $\phi: \S^{d-1} \times \R \to \R_{\geq 0}$ satisfying the conditions of Proposition~\ref{prop:laplaceFunctional}.
Let $y$ be the minimum of the support of $\phi$ on $\R$.
We are interested in the quantity
\[
  g_t(\phi) := \E_x \bigg[ \exp \bigg( -\sum_{u \in \cN_t} \phi(\theta_t^{(u)}, R_t^{(u)} - m_t^{(d)}) \bigg) \bigg] \,.
\]
Remark that the branching Brownian motion under law $\P_x$ can be constructed as $(x + Y_t^{(u)}, u \in \mathcal{N}_t)_{t \geq 0}$ where $Y$ is a branching Brownian motion started from $0 \in \R^d$. In particular, we have
\[
  R^{(u)}_t = \|Y_t^{(u)} + x\| \in \big[\|Y_t^{(u)}\| - \|x\|, \|Y_t^{(u)}\| + \|x\|\big] \,.
\]
Using the above display and the fact that $\phi(\theta_t^{(u)}, R_{t}^{(u)} - m_t^{(d)})$ is non-zero only if $R_t^{(u)} > m_t^{(d)}+y$, we obtain by Proposition~\ref{prop:window-time-L} that
only particles $u \in \cN_t$ that are descendants of particles in $\cN_L^{\win}$  contribute. Thus, applying the branching property at time $L$ as well as the Markov property shows that
\[
  g_t(\phi) = \E_x\bigg[ \prod_{u \in \calN_L^{\win}} \E_{X_L^{(u)}} \left( \exp\left(-\crochet{ \bar{\mathcal{E}}_{L,t},\phi} \right) \right) \bigg] +o_u(1) \,.
\]
Set $z_L(u) := \sqrt{2}L- R_L^{(u)}$. Then,
using Proposition~\ref{prop:tailEstimate}, we can follow the steps leading up to~\cite[Equation~3.7]{KLZ21} to obtain a non-negative sequence~$\ep_L$ tending to $0$ as $L$ tends to infinity such that for all $L$ large and $t$ large enough compared to $L$, we have
\begin{align*}
    \Bigg|g_t(\phi) -  \E_x\bigg[ \exp\Big( - \!\!\!\!\!  \sum_{u \in \calN_L^{\win}} C_d(\phi_{\theta_L^{(u)}}) \mathfrak{M}_{L,z_L(u)}
    \Big) \bigg] \Bigg| \leq \ep_L \,.
\end{align*}
From Theorem~\ref{lem:equiv-of-deriv-mtgs}, we have convergence of the above Laplace transform to
\[
\E_x\bigg[ \exp\Big( - (2\pi)^{-\alpha_d/2}\int_{\sd} C_d(\phi_{\theta}) D_{\infty}(\theta) \sigma(\d\theta) \Big) \bigg]\,,
\]
as $L$ tends to infinity.
Recalling now the definition of $C_d(\phi_{\theta})$ from~\eqref{def:C_d} and of $C(\phi)$ from~\eqref{def:C(phi)},
the previous two displays yield a non-negative sequence $\ep_L'$ tending to $0$ as $L$ tends to infinity such that for all $L$ large and $t$ large enough compared to $L$, we have
\begin{align*}
    \Bigg|g_t(\phi) -   \E_x\bigg[ \exp\Big( - \sqrt{\frac{2}{\pi^{1+\alpha_d}}} \int_{\sd} C(\phi_{\theta}) D_{\infty}(\theta)\sigma(\d\theta) \Big) \bigg] \Bigg| \leq \ep_L' \,.
\end{align*}
Since $g_{t}(\phi)$ has no $L$-dependence, it follows from the previous two displays that
\begin{align}
    g_t(\phi) \xrightarrow[t \to \infty]{}  \E_x\Big[ \exp\Big( - \sqrt{\frac{2}{\pi^{1+\alpha_d}}}\int_{\sd} C(\phi_{\theta}) D_{\infty}(\theta) \sigma(\d\theta) \Big) \Big]\,,
\end{align}
as desired.
\end{proof}

\subsection{Proof of Theorem~\ref{thm:main} and Proposition~\ref{prop:Lalley and Sellke}}
\label{sec:proof}
We recall that, as observed in Remark~\ref{rem:EVERYONE}, the statement of Proposition~\ref{prop:laplaceFunctional} is valid in all dimensions,
including $d=1$,
where for $d=1$, we have $\S^{d-1} = \{-1,1\}$, and $\sigma = \delta_1+ \delta_{-1}$ (in other words, in all dimensions $\sigma$ is the Haar measure on $\S^{d-1}$).

\begin{proof}[Proof of Theorem~\ref{thm:main}]
We note that Proposition~\ref{prop:laplaceFunctional} already gives the claimed convergence, so that we only need to prove the claimed identification of the limit.

By~\cite{ABBS12} or~\cite{ABK13}, in dimension $1$, for all continuous compactly supported function $\varphi$ on $\mathbb{R}$, we have that
\[
\lim_{t \to \infty}\E\left( e^{-\sum_{u \in \mathcal{N}_t} \varphi(X^{(u)}_t - m_t^{(1)})} \right) = \E\left( e^{- \crochet{\mathcal{E}^{(1)}_\infty,\varphi}} \right),
\]
with $\mathcal{E}^{(1)}_\infty$ a SDPPP($\sqrt{2} \gamma D_\infty$,$e^{-\sqrt{2}x}$, $\mathcal{D}$), where $m_t^{(1)}$ denotes $m_t^{(d)}$ corresponding to dimension $d=1$ and $D_\infty$ is the same as $D_\infty(\theta)$ for $d=1$ with
$\theta=1$, i.e.,  it is  the standard one dimensional derivative martingale.
Using Campbell's formula on the right hand side of the last display, we
obtain that
\[
  \E\left( e^{- \crochet{\mathcal{E}^{(1)}_\infty,\varphi}} \right) = \E\left( \exp\left( -  \sqrt{2} \gamma
  D_\infty\int_\R \left(1 - \E\left( e^{-\crochet{\mathcal{D},\varphi(x+\cdot)}} \right) \right) e^{-\sqrt{2}x}\dd x  \right) \right).
\]
Then using Proposition~\ref{prop:laplaceFunctional}
in dimension $d=1$ with $\varphi(x)\ind{\theta=1}$,
we obtain that
\begin{equation}
  \label{eq-ABKKLZ}
  \lim_{t \to \infty} \E\left( e^{-\crochet{\mathcal{E}^1_t,\varphi}} \right) = \E\Big[ \exp\Big( - \fC_1 C(\varphi)  D_{\infty} \Big) \Big],
\end{equation}
where $\fC_1 = \sqrt{2/\pi}$. (Alternatively, \eqref{eq-ABKKLZ} is obtained in~\cite[Proposition 3.2]{ABK13}.) As a result, we have
\[
   \E\Big[ \exp\Big( - \fC_1 C(\varphi) D_{\infty} \Big) \Big]
  =
  \E\left( \exp\left( -  \sqrt{2} \gamma D_\infty\int_\R \left(1 - \E\left( e^{-\crochet{\mathcal{D},\varphi(x+\cdot)}} \right) \right) e^{-\sqrt{2}x}\dd x  \right) \right)\,,
\]
and therefore
\begin{equation}
  \label{eqn:identificationOfC}
  \fC_1 C(\varphi) =  \sqrt{2} \gamma\int_\R \left(1 - \E\left( e^{-\crochet{\mathcal{D},\varphi(x+\cdot)}} \right) \right) e^{-\sqrt{2}x}\dd x,
\end{equation}
using that the Laplace transform of $ D_\infty$
is strictly decreasing as $ D_\infty$ is non-negative and non-degenerate.

Returning to $d>1$,
thanks to this identification, we can then compute, for $\phi$ continuous
and compactly supported on $\S^{d-1}\times \R$, again by Campbell's formula,
\begin{align*}
  &\E\left[ e^{-\crochet{\mathcal{E}_\infty,\phi}} \right]\\
  &=  \E\Big[ \exp\Big( - \sqrt{2}\gamma \pi^{-\alpha_d/2} \int_{\S^{d-1}} D_{\infty}(\theta) \int_\R \left(1 - \E\left[ e^{-\crochet{\mathcal{D},\phi_\theta(x+\cdot)}} \right] \right) e^{-\sqrt{2}x}\dd x \sigma(\d\theta) \Big) \Big]\\
  &=   \E\Big[ \exp\Big( - \fC_d \int_{\S^{d-1}} C(\phi_{\theta}) D_{\infty}(\theta) \sigma(\d\theta) \Big) \Big],
\end{align*}
by \eqref{eqn:identificationOfC}, using that $\fC_d = \pi^{-\alpha_d/2} \fC_1$.
\end{proof}

We remark that Theorem~\ref{thm:main} can be straightforwardly extended to a branching Brownian motion started from a finite number of particles at positions $x_1,\ldots,x_n$ in $\R^d$ using the branching property of the branching Brownian motion and the superposition property of Poisson point processes. More precisely, for $n \in \N$ and $\bfx = (x_1,\ldots,x_n) \in (\R^{d})^n$, we denote by $\P_\bfx$ the law of a branching Brownian motion started from $n$ initial particles at position $x_1$, ..., $x_n$. Under law $\P_\bfx$, $D_\infty$ has the law of $\sum_{j=1}^n D^{(j)}_\infty$, where $(D^{(j)}_\infty,~j\leq n)$ is a collection of independent random variables such that each $D^{(j)}_\infty$ has the same law as $D_\infty$ under $\P_{x_j}$. The following result then holds.
\begin{corollary}
\label{cor:multiplePoints}
Under law $\P_\bfx$, the extremal point process $\mathcal{E}_t$ converges in distribution for the topology of vague convergence to the decorated Poisson point process $\mathcal{E}_\infty$, which is the process described in Theorem~\ref{thm:main} (up to $D_\infty$ being distributed under law $\P_\bfx$).
\end{corollary}
\begin{proof}
We observe that we can write
$
  \mathcal{E}_t = \sum_{j=1}^n \mathcal{E}^{(j)}_t,
$
where $\mathcal{E}^{(j)}_t$ is the extremal point process consisting of the descendants of the initial particle alive at position $x_j$. By the branching property, $(\mathcal{E}_t^{(j)},~j \leq n)$ are independent. Using Theorem~\ref{thm:main}, we conclude the convergence of the   joint distribution of the processes $\mathcal{E}^{(j)}_t$ to that of the $\mathcal{E}^{(j)}_\infty$, where $\mathcal{E}^{(j)}_\infty$ is the process described in Theorem~\ref{thm:main}, with $D_{\infty}$ replaced by $D_{\infty}^{(j)}$. Using the superposition property of Poisson point processes, the proof is now complete.
\end{proof}

As an application of Theorem~\ref{thm:main}, we obtain the convergence in distribution of the centered norm of the maximal displacement at a large time $t$. This generalizes \cite[Theorem~1.1]{KLZ21}, as the convergence holds for any initial condition of the BBM.
\begin{corollary}
\label{cor:cvd}
We write $(\bar{\theta}_\infty,\bar{\rho}_\infty)$ for the atom of $\mathcal{E}_\infty$ with largest second coordinate. For all $\bfx \in (\R^d)^n$, we have
\[
  \lim_{t \to \infty} (\theta^*_t,R^*_t-m_t^{(d)}) = (\bar{\theta}_\infty, \bar{\rho}_\infty) \text{ in $\P_\bfx$-distribution.}
\]
In particular, for all $x \in \R^d$ and $y \in \R$, we have
\[
  \lim_{t \to \infty} \P_x\left( R^*_t \leq m_t^{(d)} + y\right) = \E_x\left( \exp\left( -\gamma \pi^{-\alpha_d/2} D_\infty(\S^{d-1})e^{-\sqrt{2}y}\right)\right).
\]
\end{corollary}

\begin{proof}
We use the same technique as in the proof of \cite[Lemma 4.4]{BBCM22}, where the vague convergence in distribution of $\mathcal{E}_t$ and the tightness of its largest atom is sufficient to conclude that the convergence in distribution of the largest atom holds.

Let $\bfx \in (\R^d)^n$. By \cite[Theorem 1.1]{Mallein15}, for all $\epsilon > 0$, there exists $M> 0$ such that
\[
  \P_\bfx(|R^*_t - m_t^{(d)}| \geq M) \leq \epsilon \,,
\]
using a trivial union bound. Then, writing $(\bar{\theta}^{(M)}_t,\bar{\rho}^{(M)}_t)$ for the atom of  $\mathcal{E}_t$ having the largest second component among all atoms belonging to $[-M,M]$ (setting $(\bar{\theta}^{(M)}_t,\bar{\rho}^{(M)}_t) = (0,-1)$ if this set is empty), we have
\[
  \P_\bfx((\bar{\theta}^{(M)}_t,\bar{\rho}^{(M)}_t) \in A) - \epsilon \leq \P_\bfx((\theta^*_t, R^{*}_t-m^{(d)}_t) \in A) \leq \P_\bfx((\bar{\theta}^{(M)}_t,\bar{\rho}^{(M)}_t) \in A) + \epsilon
\]
for every measurable subset $A$ of $\S^{d-1}\times \R_+$.

By Theorem~\ref{thm:main}, letting $t \to \infty$ we have $(\bar{\theta}^{(M)}_t,\bar{\rho}^{(M)}_t) \to (\bar{\theta}^{(M)}_\infty,\bar{\rho}^{(M)}_\infty)$ in distribution, which converges to $(\bar{\theta}_\infty,\bar{\rho}_\infty)$ as $M \to \infty$. Finally, letting $\epsilon \to 0$ and using the Portemanteau theorem, we deduce that $(\theta^*_t, R^*_t - m_t^{(d)})$ converges in distribution to $(\bar{\theta}_\infty,\bar{\rho}_\infty)$. We complete the proof using the definition of $\mathcal{E}_\infty$.
\end{proof}
%


Finally, using Corollary \ref{cor:cvd}, we prove Proposition~\ref{prop:Lalley and Sellke} using the Markov property and the convergence of bounded martingales.
\begin{proof}[Proof of Proposition~\ref{prop:Lalley and Sellke}]
Let $L \geq 0$, using the branching property at time $L$, we have
\begin{align*}
  \P(R^*_t \leq m_t^{(d)} + y|\mathcal{F}_L) &= \prod_{u \in \mathcal{N}_L} \P_{X_L^{(u)}}\left( R^*_{t-L} \leq m_t^{(d)} + y\right)\\
  &\xrightarrow{t \to \infty} \prod_{u \in \mathcal{N}_L} \E_{X_L^{(u)}}\left( \exp\left(-\gamma \pi^{-\alpha_d/2} D_\infty(\S^{d-1})e^{-\sqrt{2}y - 2L} \right)\right) \text{ a.s.}
\end{align*}
using that $\lim_{t\to \infty} m_{t}^{(d)} - m_{t-L}^{(d)} = \sqrt{2}L$ and Corollary \ref{cor:cvd}.

On the other hand, for all $L \leq t$, we have
\[
  (D_t(\theta),\theta \in \S^{d-1}) = \left(e^{-2L}\sum_{u \in \mathcal{N}_L} D^{(u)}_{t-L}(\theta) + \sqrt{2} L e^{-2L}\sum_{u \in \mathcal{N}_L} W^{(u)}_{t-L}(\theta), \theta \in \S^{d-1}\right),
\]
with the following notation. For $u \in \mathcal{N}_L$, recall the set $\mathcal{N}^{u}_{t-L}$ (see just above Proposition~\ref{prop:sector-result}), and for $v \in \mathcal{N}^{u}_{t-L}$, let $X^{(v)}_{t}$ denote the position of that particle at time $t$. Then, for $u \in \mathcal{N}_L$, the $(D^{(u)}_{t-L},W^{(u)}_{t-L})$ are defined via
\[
  \theta\mapsto \left(\sum_{v \in \mathcal{N}_{t-L}^{u}} (\sqrt{2} (t-L) - X_{t}^{(v)}\cdot\theta)e^{\sqrt{2}X^{(v)}_{t} \cdot \theta - 2(t-L)}, \sum_{v \in \mathcal{N}_{t-L}^{u}} e^{\sqrt{2}X^{(v)}_{t} \cdot \theta - 2(t-L)} \right).
\]
By the branching property, $(D^{(u)}_{t-L},W^{(u)}_{t-L})_{u \in \mathcal{N}_L}$ are conditionally on $\mathcal{F}_L$ independent random variables, with each $(D_{t-L}^{(u)}, W_{t-L}^{(u)})$ having the same law as $(D_{t-L},W_{t-L})$ under law $\P_{X_L^{(u)}}$. Therefore, for all $u\in\mathcal{N}_L$, the $(D_t^{(u)},W_t^{(u)})$ jointly converge almost surely as $t \to \infty$ to $(D^{(u)}_\infty,0)$ for the topology of the weak convergence on the sphere, by \cite[Theorem 1.3]{StBeMa2020}. Moreover, conditionally on 
$\mathcal{F}_L$, $(D^{(u)}_\infty, u \in \mathcal{N}_L)$ are independent random variables such that $D^{(u)}_\infty$ has the same law as $D_\infty$ under the
law $\P_{X_L^{(u)}}$.
As a result, letting $t \to \infty$, we deduce that for all $L> 0$, the following measures coincide a.s.:
\begin{equation}
  \label{eqn:smoothingTransform}
   D_\infty(\theta)\sigma(\d \theta) = e^{-2L}\sum_{u \in \mathcal{N}_L}D^{(u)}_{\infty}(\theta)\sigma(\d \theta) \,.
\end{equation}

In particular, this yields
\begin{multline*}
  \prod_{u \in \mathcal{N}_L} \E_{X_L^{(u)}}\left( \exp\left(-\gamma \pi^{-\alpha_d/2} D_\infty(\S^{d-1})e^{-\sqrt{2}y - 2L}\right) \right)\\
  = \E\left(\exp\left(-\gamma \pi^{-\alpha_d/2} D_\infty(\S^{d-1})e^{-\sqrt{2}y - 2L}\right) \middle| \mathcal{F}_L\right) \quad \text{a.s.}
\end{multline*}
Using the almost sure convergence of closed martingales, we conclude that
\[
  \lim_{L \to \infty} \lim_{t \to \infty} \P(R^*_t \leq m_t^{(d)} + y|\mathcal{F}_L) = \exp\left(-\gamma \pi^{-\alpha_d/2} D_\infty(\S^{d-1})e^{-\sqrt{2}y - 2L}\right)
\]
almost surely.

We now turn to the convergence of the angle of the maximal displacement. Using again the branching property at time $L$ then Corollary \ref{cor:cvd}, we have for any measurable set $A \subset \S^{d-1}$ such that $\sigma(\partial A)=0$,
\begin{align*}
  \P(\theta^*_t \in A|\mathcal{F}_L) &= \P_{(X_L^{(u)},u \in \mathcal{N}_L)}(\theta^*_t \in A)\\
  &\xrightarrow{t \to \infty} \P_{(X_L^{(u)},u \in \mathcal{N}_L)}(\bar{\theta}_\infty \in A) = \E_{(X_L^{(u)},u \in \mathcal{N}_L)}\left(\frac{D_\infty(A)}{D_\infty(\S^{d-1})}\right) \text{ $\P$-a.s.,}
\end{align*}
by the definition of $\mathcal{E}_\infty$. Hence using \eqref{eqn:smoothingTransform}, we deduce that
\[
  \E_{(X_L^{(u)},u \in \mathcal{N}_L)}\left(\frac{D_\infty(A)}{D_\infty(\S^{d-1})}\right) = \E\left( \frac{D_\infty(A)}{D_\infty(\S^{d-1})} \middle|\mathcal{F}_L\right) \quad \text{a.s.}.
\]
We can now conclude using again the convergence of closed martingales.
\end{proof}

\section{Proof of Proposition~\ref{prop:tailEstimate}}
\label{sec:proof-tail-estimate}

We finish this article with a proof of Proposition~\ref{prop:tailEstimate}, which uses the geometrical result Proposition~\ref{prop:sector-result} to take care of the directional constraint, the results of~\cite{KLZ21} to characterize the typical trajectories of particles contributing to the Laplace functional, and a coupling with one-dimensional branching Brownian motion on the last time interval of length $\ell$. Throughout this section, let $\ell:=\ell(L)$ satisfy the properties
\begin{align}
1\leq \ell \leq L^{1/6} \quad \text{ and } \quad  \lim_{L \to \infty} \ell= \infty \,.
\label{def:ell}
\end{align}
The precise dependence of $\ell$ on $L$ does not play a role. For convenience, we will write $\tl := t - L$. Since we will send $t$ to infinity before $L$, the parameter $L$ should be thought of as an order one quantity (compared to $t$). Also, we fix a constant $y \in \R$ in what follows until further specified.

\subsection{Description of the extremal particles by \texorpdfstring{\cite{KLZ21}}{KLZ21}}
The (modified) second moment method used to prove~\cite[Theorem~3.2]{KLZ21} implies the following:
for a branching Bessel process started from the window $I_L^{\win}$ (at time $0$),
particles $u \in \cN_{\tl}$ that reach height $m_t^{(d)}+y$ or higher at time $\tl$ follow a well-controlled trajectory until time $\tll$; furthermore, among particles $v\in \cN_{\tll}$ that follow such a trajectory, at most one produces a descendent that reaches height $m_t^{(d)}+y$. These two results are reproduced below as Propositions~\ref{prop:klz21-1stmom-results} and~\ref{prop:klz21-2ndmom}, respectively.
We will use these two ideas in the proof of Proposition~\ref{prop:tailEstimate} to great effect.
Before we begin, let us lay out some notation that will be familiar from~\cite{KLZ21}.

Let
$\y(b) := \frac{m_t^{(d)}}{t}(t-\ell) +y - b$. For any $v \in \cN_{\tll}$, we define the event
\begin{align}
    \mathfrak{T}(v) &:= \mathfrak{T}_{t, L, y}(v) =   \Big \{ \max_{u \in \cN_{\ell}^v} R_{\tl}^{(u)} > m_t^{(d)}+y
    \Big \} \, .
    \label{def:T-tailevent}
\end{align}
We will also make use of certain ``barrier events'' to restrict the paths of our BBM particles. For functions $f,g: [0,\infty) \to \R$, a set  $I \subset [0,\infty)$,
and a real-valued process $X_\cdot$,
we call events of the following form \emph{barrier events}:
\begin{align*}
    \UB{I}{f}(X_\cdot) &:= \{ X_u \leq f(u),~\forall u \in I \} \quad \text{ and }
    \quad
    \LB{I}{f} (X_\cdot) := \{ X_u \geq f(u),~\forall u \in I \}
    \, .
\end{align*}
Recall the barriers $B_0(s):=B_0(s; t,L)$ (\cite[Equation~4.38]{KLZ21}) and $Q_z(s):= Q_z(s; t, L,y)$ (\cite[Equation~5.5]{KLZ21}), whose precise definitions we will not use here.
A crucial barrier event used throughout this subsection will be the event
that a process $X_{\cdot}$ is bounded above by the linear barrier $\frac{m_t^{(d)}}{t}(\cdot+L)+y$ (where we write $f(\cdot+r)$ to denote the function $u \mapsto f(u+r)$) and bounded below by $Q_z(\cdot)$ on a certain time interval $I \subset \R_{\geq 0}$. We will denote this event by $\B_I(X_\cdot) := \B_{I,y,z,L,t}(X_\cdot)$; note that
\begin{align}
    \B_I(X_\cdot) =
    \UB{I}{\frac{m_t^{(d)}}{t}(\cdot+L)+y}(X_\cdot)
    \cap
    \LB{I}{Q_z}(X_\cdot)
    \, .
    \label{eqn:def-upper/lowerbarrier}
\end{align}
Now, for any $v \in \cN_{\tll}$, define the events
\begin{align}
    F_{L,t}(v)
    &:=
        \UB{[0, \tl - \ell]}{B_0}(R^{(v)}_\cdot)
        \cap
        \Big \{R^{(v)}_{\tll} > \frac{t}{\sqrt{d}} \Big \}
    \, , \text{ and} \label{def:F-event}\\
    G_{L,t}(v)
    &:=
    \B_{[0, \tl-\ell]}(R^{(v)}_\cdot)
        \cap
        \left\{
            \y\big( R^{(v)}_{\tl - \ell} \big)  \in [ \ell^{1/3}, \ell^{2/3}]
        \right\}
        \,.  \label{def:G-event}
\end{align}
The event $G_{L,t}(v) \cap \fT(v)$ is depicted in Figure~\ref{fig:G-Lt-event}.
\begin{figure}
     \begin{tikzpicture}[>=latex,font=\small]
 \draw[->] (0, 0) -- (10, 0);
  \draw[->] (0, 0) -- (0, 6.5);
  \node (fig1) at (4.55,3.8) {
    \includegraphics[width=.65\textwidth]{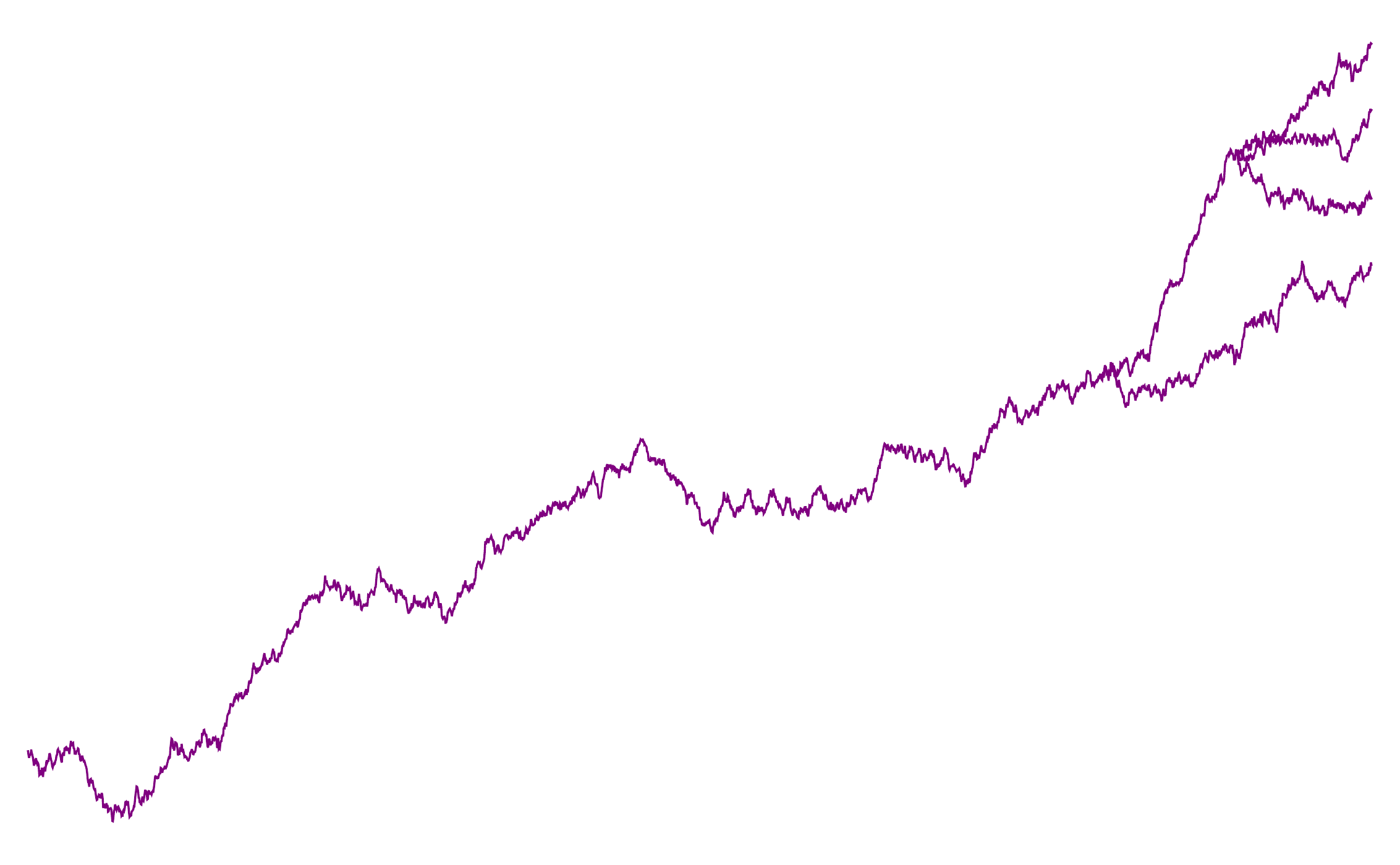}};
   \node[circle,fill=blue!75!black,label={[blue!75!black]left:$\frac{m_t^{(d)}}tL+y$}, inner sep=1.75pt] (a) at (0.0,2.5) {};
   \node[circle,fill=blue!75!black,label={[blue!75!black]right:$m_t^{(d)}+y$},inner sep=1.75pt] (b) at (9.12,5.6) {};
   \draw (-0.05,2.1) -- (0.05,2.1) node [left,font=\tiny] {$\sqrt 2 L - L^{1/6}$};
   \draw (-0.05,1.3) -- (0.05,1.3) node [left,font=\tiny] {$\sqrt 2 L - L^{2/3}$};
   \draw (-0.05,.6) -- (0.05,.6) node [left,font=\tiny] {$\sqrt 2 L - 2L^{2/3}$};
   \node[circle,fill=blue!45!purple,label={[blue!45!purple]left:$\sqrt{2}L-z$},inner sep=1.75pt] (x) at (0, 1.7) {};
   \node[circle, fill=blue!75!black, label={[blue!75!black, font=\tiny,label distance=-3pt]above left:$\y(0)$}, inner sep=1.75pt] (y0) at ( $(a)!0.74!(b)$) {};
   \node[circle, fill=black, label={[black, font=\tiny,label distance=-3pt]left:$\y(\ell^{1/3})$}, inner sep=1.75pt] (y1) at ( $(y0)-(0,0.5)$) {};
   \node[circle, fill=black, label={[black,font=\tiny]right:$\y(\ell^{2/3})$}, inner sep=1.75pt] (y2) at ( $(y0)-(0,1.5)$) {};
   \node[circle, fill=green!50!blue, label={[green!50!blue,font=\tiny]right:$\y(2\ell^{2/3})$}, inner sep=1.75pt] (y3) at ( $(y0)-(0,3)$) {};
   \node[circle, fill=green!50!blue, inner sep=1.75pt] (a0) at ( 0.75, 0.6 ) {};
   \node[circle, fill=green!50!blue, inner sep=1.75pt] (a00) at ( 0, 0.6 ) {};
   \node[circle, color=green!50!blue,fill=green!10, draw, inner sep=1.75pt] (a1) at ( 0.75, 0.3 ) {};
   \node[circle, color=green!50!blue,fill=green!10, draw, inner sep=1.75pt] (b1) at ( 5.5, 1.1 ) {};
   \node[circle, fill=green!50!blue, inner sep=1.75pt] (b2) at ( $(y3)-(1.25,0)$ ) {};
  \draw[green!50!blue, thick] (a00) -- (a0);
  \draw[green!50!blue, thick, dashed] (a00) to[bend right=10] (a1);
  \draw[green!50!blue, thick] (a1) to[bend right=10] node[midway,above,sloped] {$Q_z(s)$} (b1);
  \draw[green!50!blue, thick, dashed] (b1) to[bend right=10] (y3);
  \draw[green!50!blue, thick] (b2) -- (y3);
  \draw[blue!75!black, thick] ($(a)+(0.02,0.02)$) -- node[midway,above,sloped] {$\frac{m_t^{(d)}}t(s+L)+y$} (y0);
  \draw[blue!75!black, thick, dashed] (y0) -- (b);
  \draw (.75,0.05) -- (.75,-0.05) node[below] {$\ell_1$};
  \draw (5.5,0.05) -- (5.5,-0.05) node[below] {$\tilde t-\ell-\ell_1$};
  \draw (6.75,0.05) -- (6.75,-0.05) node[below] {$\tilde t - \ell$};
  \draw (9.12,0.05) -- (9.12,-0.05) node[below] {$\tilde t$};
  \end{tikzpicture}
     \caption{The event $G_{L,t}(v)\cap \fT(v)$ from~\eqref{def:G-event}: the trajectory $R^{(v)}_.$ of a given particle $v\in \cN_{\tll}$ is bounded above by the blue line and below by the solid green curves that comprise $Q_z$ on $[0,\tll]$, $R^{(v)}_{\tll}$ lies in the window $[\y(\ell^{2/3}), \y(\ell^{1/3})]$, and $v$ produces a descendent in $\cN_{\tl}$ that exceeds $m_t^{(d)}+y$.}
     \label{fig:G-Lt-event}
\end{figure}
In Proposition~\ref{prop:tailEstimate}, we consider a $d$-dimensional BBM started from a point on the sphere of radius $\sqrt{2}L - z$, where $z \in [L^{1/6}, L^{2/3}]$. In particular, we seek to demonstrate asymptotic statements uniformly over $z$ in this interval. For such statements, we will use heavily the notation set forth in Section~\ref{subsec:asymp-notation}.

The following results  are from~\cite{KLZ21}.
\begin{proposition}[{\cite{KLZ21}}] \label{prop:klz21-1stmom-results}
\begin{align*}
    \E_{\sqrt{2}L-z} \bigg[ \sum_{u \in \cN_{\tll}} \indset{F_{L,t}(v)^c} \bigg]  &= o_u(\fM_{L,z}) \,, \text{ and} \\
    \E_{\sqrt{2}L-z} \bigg[ \sum_{u\in \cN_{\tll}} \indset{F_{L,t}(v) \setminus G_{L,t}(v)} \indset{\fT(v)} \bigg] &= o_u(\fM_{L,z})\,.
\end{align*}
\end{proposition}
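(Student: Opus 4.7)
The plan is to reduce both sums to single-particle probabilities via the many-to-one lemma (Lemma~\ref{lem:many-to-one}), and then dispatch them with a Girsanov tilt plus ballot-style barrier estimates for the resulting drifted Brownian motion. Writing the $d$-dimensional Bessel SDE $\d R_s = (\alpha_d/R_s)\,\d s + \d W_s$ and changing measure by $\exp(\sqrt{2}W_{\tll} - \tll)$, the factor $e^{\tll}$ coming from many-to-one is absorbed, leaving an integrand weighted by $e^{-\sqrt{2}(R_{\tll} - (\sqrt{2}L-z))}$ under a tilted measure $\widetilde{\Prob}$; the Bessel drift $\alpha_d/R_s$ contributes a Radon--Nikodym factor whose integral, when evaluated on trajectories with $R_s \sim \sqrt{2}s$, produces the polynomial $((\sqrt{2}L-z)/R_{\tll})^{\alpha_d}$ to leading order. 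This is how the prefactor $\fM_{L,z} = (\sqrt{2}L-z)^{-\alpha_d} z\, e^{-\sqrt{2}z}$ naturally appears.

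For the first bound, I would split $F_{L,t}(v)^c$ into the event that $R_\cdot$ exceeds the upper barrier $B_0$ on $[0,\tll]$ and the event $R_{\tll} \le t/\sqrt{d}$. Under $\widetilde{\Prob}$ the process has average slope $\sqrt{2}$, so the first event becomes a standard Bramson-type barrier violation whose probability gains enough extra exponential decay to beat $\fM_{L,z}$ uniformly, while the second event has Gaussian cost $e^{-ct}$ since $t/\sqrt{d} \ll \sqrt{2}\tll$. For the second bound, I would condition on $\cF_{\tll}$ and apply the branching property: given $R_{\tll}^{(v)}$, the conditional probability of $\fT(v)$ is the probability that a $d$-dimensional BBM started at that radius produces a descendant of norm at least $m_t^{(d)} + y$ within time $\ell$. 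A radial projection onto a one-dimensional BBM bounds this by the standard BBM-maximum tail, which by Proposition~\ref{prop:abk-tailconvergence} (or the Bramson-style estimate used in~\cite[Lemma~5.1]{KLZ21}) is at most a constant times $(m_t^{(d)} + y - R_{\tll}^{(v)})\, e^{-\sqrt{2}(m_t^{(d)} + y - R_{\tll}^{(v)})}$ in the relevant window. Combining the two tilt factors and integrating over $R_{\tll}^{(v)}$ reduces the problem to a $\widetilde{\Prob}$-probability that a drifted Brownian motion satisfies $F_{L,t}(v) \setminus G_{L,t}(v)$, i.e.\ either violates the lower barrier $Q_z$ on $[0,\tll-\ell]$ or exits the window $[\ell^{1/3},\ell^{2/3}]$ at time $\tll$; classical entropic repulsion estimates make both contributions negligible.

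The main obstacle is making every estimate \emph{uniform} in $z \in \zint$ (and, for Proposition~\ref{prop:tailEstimate}, in the starting direction on $\sd$), against the competing normalization $\fM_{L,z}$ that decays only polynomially in $z$. This requires ballot estimates for drifted Brownian motions on $[0,\tll]$ whose endpoints and barrier slopes scale with both $z$ and $L$, together with careful control of the error in replacing the Bessel drift $\alpha_d/R_s$ by its value along the typical trajectory over a time interval of length $\tll \sim t$. Producing these uniform bounds is the technical heart of the modified second-moment method developed in~\cite[Sections~4--5]{KLZ21}, which the present proof invokes directly.
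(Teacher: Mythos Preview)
The paper does not actually prove this proposition: it is quoted as a result from~\cite{KLZ21}, with the sentence ``The above proposition follows from~\cite[Lemma~4.3 and Claim~5.5]{KLZ21} respectively and their proofs'' and a footnote noting that those results are stated for $\P(\cup_v A_v)$ but proved via the first-moment quantities $\E[\sum_v \indset{A_v}]$. Your proposal is a reasonable sketch of that cited argument (many-to-one, Girsanov from Bessel to Brownian, exponential tilt, barrier/ballot estimates, coupling to one-dimensional BBM for $\fT(v)$), and you explicitly acknowledge at the end that you are invoking~\cite[Sections~4--5]{KLZ21}; so in substance you and the paper agree.

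One small imprecision worth flagging: your description of the Girsanov step conflates two separate changes of measure. The transform that converts the Bessel process to a Brownian motion produces the Radon--Nikodym factor $(R_{\tll}/\x(z))^{\alpha_d}\exp\big(\int_0^{\tll}(\alpha_d-\alpha_d^2)/R_s^2\,\d s\big)$ (this is \cite[Equation~2.7]{KLZ21}, used later in the proof of Proposition~\ref{prop:tailEstimate}); the exponential tilt $\exp(\sqrt{2}W_{\tll}-\tll)$ is a \emph{second} change of measure applied afterwards to absorb the $e^{\tll}$ from many-to-one. Keeping these separate is what makes the polynomial factor $(\sqrt{2}L-z)^{-\alpha_d}$ and the exponential factor $z e^{-\sqrt{2}z}$ in $\fM_{L,z}$ appear cleanly. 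This does not affect the correctness of your outline, only its presentation.
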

Proposition \ref{prop:klz21-1stmom-results} follows from~\cite[Lemma~4.3, Lemma~5.1 and Claim~5.5]{KLZ21} respectively and their proofs.\footnote{The upper-bounds in these results of~\cite{KLZ21} are stated for quantities of the form $\P( \cup_{u\in \cN_{\tll}} A_v)$, while Proposition~\ref{prop:klz21-1stmom-results} bounds (larger) first-moment quantities of the form $\E[\sum_{u \in \cN_{\tll}} \indset{A_v}]$. However, the first step in the proof of each result of~\cite{KLZ21} is to bound $\P( \cup_{u\in \cN_{\tll}} A_v)$ by the corresponding first-moment quantity, so that the results there are actually shown via upper-bounds on the $\E[\sum_{u \in \cN_{\tll}} \indset{A_v}]$. Thus, we indeed have Proposition~\ref{prop:klz21-1stmom-results}.}
The result below is from~\cite[Lemma~5.2]{KLZ21} (see also there equation (7.2)).
\begin{proposition}[\cite{KLZ21}] \label{prop:klz21-2ndmom}
\begin{align*}
\E_{\theta (\sqrt{2}L -z )}\Big[ \sum_{v \neq w \in \cN_{\tll}} \ind{G_{L,t}(v) \cap G_{L,t}(w)} \Big] = o_u(\fM_{L,z})
\end{align*}
\end{proposition}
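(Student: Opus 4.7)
The plan is to apply a many-to-two lemma to convert the expected number of ordered pairs into an integral over the branching time $s\in[0,\tll]$ of the two lineages, and then bound the integrand using the same ballot estimates that underlie the first-moment bounds of Proposition~\ref{prop:klz21-1stmom-results}. For binary BBM of unit branching rate, the many-to-two lemma yields
\begin{align*}
\E_{\theta(\sqrt{2}L-z)}\Big[\sum_{v\neq w\in\cN_{\tll}}\ind{G_{L,t}(v)\cap G_{L,t}(w)}\Big]
\;=\;
2\int_0^{\tll} e^{2\tll-s}\,\P\big(G_{L,t}(\bar X^{(1)})\cap G_{L,t}(\bar X^{(2)})\big)\,\d s,
\end{align*}
where $\bar X^{(1)},\bar X^{(2)}$ are two $d$-dimensional Brownian paths that start from $\theta(\sqrt{2}L-z)$, agree on $[0,s]$, and evolve as independent Brownian motions on $[s,\tll]$.

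To bound the integrand I would condition on the common trunk $(\bar X_u)_{u\leq s}$ and apply the strong Markov property at time $s$. Since $G_{L,t}$ depends only on norms, the conditional joint probability factors as $p_s(a_s)^2$, where $a_s := \|\bar X_s\|$ and $p_s(a)$ is the probability that a single $d$-dim Bessel process started at $a$ at time $s$ stays in $\B_{[s,\tll-\ell]}$ and ends in the window $[\y(\ell^{2/3}),\y(\ell^{1/3})]$ at time $\tll-\ell$. The ballot-type barrier estimates underlying Proposition~\ref{prop:klz21-1stmom-results} show that $p_s(a)\asymp\fM_{L,z}\cdot e^{-(\tll-s)}$ up to a polynomial factor and an endpoint Gaussian density. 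Integrating the product $q_s(a)\,p_s(a)^2$ (with $q_s$ the density of the trunk satisfying the barrier on $[0,s]$ and ending at $a$) against the many-to-two weight $e^{2\tll-s}$ yields, after simplification, an integrand of order $e^{-s}(\fM_{L,z})^2$ up to slowly varying corrections, and hence an $s$-integral of order $(\fM_{L,z})^2$.

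This suffices because $\fM_{L,z}\to 0$ uniformly over $z\in[L^{1/6},L^{2/3}]$: indeed $\fM_{L,z}\leq L^{2/3}(\sqrt{2}L-L^{2/3})^{-\alpha_d}e^{-\sqrt{2}L^{1/6}}\to 0$ as $L\to\infty$, so $(\fM_{L,z})^2 = o_u(\fM_{L,z})$. In practice one would split $s\in[0,\tll-\ell]$ into three regimes --- an early window $s\leq L$, a bulk window $s\in[L,\tll-L]$, and a late window $s\in[\tll-L,\tll-\ell]$ --- applying a crude first-moment bound to each of the two independent branches in the early regime, the full two-sided ballot estimate in the bulk (where the $e^{-s}$ decay is essential for integrability), and a short-time Gaussian estimate in the late regime.

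The main obstacle is the late-splitting regime $\tll-s\ll L$, where the two branches start from essentially the same point in the narrow corridor and must each reach the tight window of width $O(\ell^{2/3})$ in the very short remaining time. The naive bound $p_s(a)^2$ is not obviously sharp, and one must exploit that two independent Brownian motions from a common point $x_s$ spread apart like $\sqrt{\tll-s}$, so their joint probability of both hitting a window of width $\ell^{2/3}$ decays like a squared Gaussian density; this compensates for the short integration range and keeps the total of order $(\fM_{L,z})^2$. A secondary technical point is the passage from $d$-dimensional Brownian motions after the split to independent Bessel processes from $\|\bar X_s\|$, which is legitimate because $\|\bar X_s\|$ is macroscopic of order $L$, and the angular drift is therefore negligible on the relevant scales, as quantified by Proposition~\ref{prop:sector-result}.
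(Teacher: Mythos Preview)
The paper does not give a proof of this proposition; it simply cites \cite[Lemma~5.2 and Equation~(7.2)]{KLZ21}. Your many-to-two plus barrier-estimate outline is exactly the approach taken there, so the overall strategy is the right one. A few points in your sketch should be tightened, however.

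First, the assertion that $p_s(a)\asymp\fM_{L,z}\cdot e^{-(\tll-s)}$ is not well-formed: $p_s$ depends on the trunk endpoint $a$ and on the remaining time $\tll-s$, not on $z$. The correct shape of the ballot estimate is $p_s(a)\asymp h_s\,w\,(\tll-s)^{-3/2}e^{-(\tll-s)}e^{\sqrt{2}(w-h_s)}$ (after Girsanov), where $h_s$ is the distance of $a$ from the upper barrier at time $s$ and $w\in[\ell^{1/3},\ell^{2/3}]$ is the landing height; the $z$-dependence enters only through the trunk factor $q_s$.

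Second, and more importantly, your claim that the $s$-integral is $(\fM_{L,z})^2$ ``up to slowly varying corrections'' is not accurate as stated. Each branch carries an integral $\int_{\ell^{1/3}}^{\ell^{2/3}} w e^{\sqrt{2}w}\,\d w$ from the landing window, so the pair count is of order $(\fM_{L,z})^2\,e^{2\sqrt{2}\ell^{2/3}}$ times polynomials---this is \emph{not} slowly varying. The bound still gives $o_u(\fM_{L,z})$, but only because of the constraint $\ell\leq L^{1/6}$ in~\eqref{def:ell}, which forces $\ell^{2/3}\leq L^{1/9}\ll L^{1/6}\leq z$ and hence $\fM_{L,z}\,e^{2\sqrt{2}\ell^{2/3}}\to 0$. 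You should make this mechanism explicit; it is the actual reason the bound closes.

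Finally, your last paragraph is unnecessary. The event $G_{L,t}$ depends only on norms, and the norms of two independent $d$-dimensional Brownian motions started from the same point are, exactly and without any approximation, two independent $d$-dimensional Bessel processes started from $\|\bar X_s\|$. There is no passage to justify and no role for Proposition~\ref{prop:sector-result} here.
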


In the proof of Proposition~\ref{prop:tailEstimate}, we will use Proposition~\ref{prop:klz21-1stmom-results} to show that the only particles in $\cN_{\tl}$ contributing to $\E_{\theta(\sqrt{2}L-z)}[ 1- \exp (- \langle \overline{\cE}_{L,t} , \phi \rangle ) ]$ are those that descended from particles $v\in \cN_{\tll}$ that performed the event $G_{L,t}(v)$. Proposition~\ref{prop:klz21-2ndmom} shows that only one such particle in $v\in \cN_{\tll}$ will perform $G_{L,t}(v)$.

\subsection{Coupling with a one-dimensional BBM}
Eventually, we will encounter an expression of the form
\begin{align*}
    \E_{\y(w)}\Big[\exp (-
    \crochet{\cE_{\ell}, \phi(\cdot - c)}) \Big] =
    \E_{\y(w)}\Big[ 1- \sum_{u \in \cN_{\ell}} \phi( R_{\ell}^{(u)}- m_t^{(d)}-y-c) \Big] \,,
\end{align*}
where $\phi: \R \to \R$ is a continuous function and
 $w \in [\ell^{1/3}, \ell^{2/3}]$ (c.f. the definition of $G_{L,t}(v)$ in~\eqref{def:G-event}).
 We will approximate this expression via a coupling with the one-dimensional branching Brownian motion, which will then via a famous formula of McKean~\cite{McKean75} give an expression in terms of the solution to the F-KPP equation, for which asymptotics are by now well-understood (see Proposition~\ref{prop:abk-tailconvergence}). Let us discuss these matters now.

In light of the SDE for the $d$-dimensional Bessel process,
\begin{align*}
    \d R_s = \frac{\alpha_d}{R_s} \d s + \d W_s \,,
\end{align*}
one might expect that since $\y(w)$ is of order $t$, a branching Bessel process on $[0,\ell]$ started from $\y(w)$ may be coupled to be ``very close'' with probability going to $1$ as $t$ goes to infinity with a one-dimensional branching Brownian motion (for which many more results are known). Indeed, this was shown in~\cite{KLZ21}. Before stating this result, let us state the coupling.
Consider the natural coupling  of a one-dimensional BBM and a $d$-dimensional branching Bessel process obtained by using the same branching tree for both processes (hence the same set of particles in both processes at all times), and the same driving Brownian motion for each edge in the tree (to be used in each of the SDEs by the two processes for evaluating the location of the corresponding particle).
Thus, for all $s >0$, each $v \in N_s$, is associated to a Bessel process $R^{(v)}_\cdot$ and a $1$-d Brownian motion $W^{(v)}_\cdot$ satisfying the SDE
\[
dR^{(v)}_r = \frac{\alpha_d}{R^{(v)}_r} \d r + \d W^{(v)}_r \,.
\]
Note that $R^{(v)}_r \geq W^{(v)}_r$.
\begin{proposition}[{\cite{KLZ21}}]\label{prop:klz21-coupling}
Consider the above-defined  coupling of $1$-dimensional BBM $\{W_s^{(v)}\}_{s \geq 0, v \in N_s}$ and a branching $d$-dimensional Bessel process $\{R_s^{(v)}\}_{s \geq 0, v\in N_s}$ started at $x$, for some $x>0$. Fix $\ell>0$, and let
\[ \cG_x = \Big\{ \min_{v\in \cN_{\ell}} \inf_{0\leq s \leq \ell}  R_s^{(v)} \geq x/4\Big\}\,.\]
Then there exists some constant $C_d>0$ such that for large enough $x$ (in terms of $\ell$),
\[ \sup_{0 \leq  s \leq \ell} \sup_{v\in N_s} \left|R_s^{(v)} - W_s^{(v)} \right| \one_{\cG_x} \leq C_d \ell / x \,.
 \]
 Furthermore, $\P_x(\cG_x^c) \leq (2+e^{\ell})e^{-x^2/8\ell}$.
\end{proposition}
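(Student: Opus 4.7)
The plan is to reduce everything to a scalar Brownian estimate by exploiting the Bessel SDE directly. Under the prescribed coupling (shared branching tree, shared driving Brownian motion along each edge), with both processes started from $x$, I would first integrate the SDE for $R^{(v)}$ and subtract $W^{(v)}$, yielding for every $v \in \cN_s$ and $s \le \ell$ the identity
$$R^{(v)}_s - W^{(v)}_s = \int_0^s \frac{\alpha_d}{R^{(v)}_r}\,dr,$$
whose nonnegativity recovers the pointwise comparison $W^{(v)}_s \le R^{(v)}_s$ already noted in the statement.

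On the event $\cG_x$ the integrand is uniformly bounded by $4\alpha_d/x$, so
$$0 \le R^{(v)}_s - W^{(v)}_s \le \frac{4\alpha_d\, \ell}{x}$$
uniformly in $s \le \ell$ and $v \in \cN_s$, which gives the first conclusion with $C_d = 4\alpha_d$. The only thing to check here is that the integral identity persists along ancestral concatenations, but this is immediate because the shared driving Brownian motions cancel edge by edge in the difference.

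For the tail bound on $\cG_x^c$ I would use $R^{(v)} \ge W^{(v)}$ to embed
$$\cG_x^c \subseteq \{\exists s \le \ell,\ v \in \cN_s : W^{(v)}_s \le x/4\},$$
reducing matters to a first-moment calculation on the one-dimensional BBM. The many-to-one lemma (Lemma~\ref{lem:many-to-one}) applied to $\indset{\inf_{r \le \ell} W_r \le x/4}$ bounds the right-hand probability by $e^\ell\,\P_x(\inf_{r \le \ell} W_r \le x/4)$, and the reflection principle together with a Gaussian tail give
$$\P_x\big(\inf_{r \le \ell} W_r \le x/4\big) = 2\,\P_x(W_\ell \le x/4) \le 2e^{-9x^2/(32\ell)} \le 2e^{-x^2/(8\ell)}.$$
To sharpen the resulting $2e^\ell e^{-x^2/(8\ell)}$ into the stated $(2+e^\ell)\,e^{-x^2/(8\ell)}$, I would handle the initial single-particle trajectory up to the first branching time separately (direct reflection yields the additive $2\,e^{-x^2/(8\ell)}$) and apply many-to-one only to the descendants of the first branch. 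The main obstacle is really only the bookkeeping around the concatenation of $W^{(v)}$ along ancestral edges and the shared-BM cancellation; no deeper difficulty arises beyond the Bessel SDE and standard one-dimensional Brownian estimates.
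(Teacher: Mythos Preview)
Your approach is correct and is the natural one. Note first that the paper does not supply its own proof here: the proposition is quoted from \cite{KLZ21} (Claim~6.2 and the proof of Corollary~6.3 there), so there is no in-paper argument to compare against.

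The deterministic bound on $\cG_x$ is exactly as you say: integrating the Bessel drift along the ancestral path gives $R_s^{(v)}-W_s^{(v)}=\int_0^s \alpha_d/R_r^{(v)}\,\d r\le 4\alpha_d\ell/x$ on $\cG_x$, and the edge-by-edge cancellation of the shared driving noise is indeed a triviality.

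For the tail of $\cG_x^c$, your comparison $R^{(v)}\ge W^{(v)}$ followed by many-to-one and reflection yields $\P_x(\cG_x^c)\le 2e^\ell e^{-9x^2/(32\ell)}\le 2e^\ell e^{-x^2/(8\ell)}$, which is marginally \emph{weaker} than the stated $(2+e^\ell)e^{-x^2/(8\ell)}$ once $\ell>\log 2$. This discrepancy is irrelevant for every downstream use (in Corollary~\ref{cor:coupling} one has $x\to\infty$ with $\ell$ a function of $L$), so you could simply record your bound and move on. Your proposed refinement, however, does not work as written: after the first branching time $\tau$ there are two particles, and many-to-one on $[\tau,\ell]$ contributes $2e^{\ell-\tau}$; integrating against the law of $\tau$ and the position $W_\tau$ does not collapse this to a clean $e^\ell$ term, because the hitting probability from $W_\tau$ is not uniformly controlled without re-introducing the full-interval reflection bound. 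I would drop the attempt to match the exact prefactor and state the $2e^\ell$ version, noting that it suffices.
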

The first bound is given in~\cite[Claim~6.2]{KLZ21}, while the bound on $\P_x(\cG_x^c)$ is given in the proof of~\cite[Corollary~6.3]{KLZ21}.
 As a consequence of the coupling result in Proposition~\ref{prop:klz21-coupling}, we have the following corollary, which shows that functionals of a branching Bessel process may be replaced by functionals of one-dimensional BBM, up to a negligible error, if the time interval on which the process is considered is much shorter than the initial position of the process.
\begin{corollary}\label{cor:coupling}
Fix $\phi: \R \to \R$  a uniformly continuous function, $y \in \R$, and $c> 0$. Let $\ell:= \ell(L)$ be as in~\eqref{def:ell}. Couple $\{W_s^{(v)}\}_{s\geq 0, v\in N_s}$ and $\{R_s^{(v)}\}_{s\geq 0, v\in N_s}$ as above. Then
uniformly over $x := x(t)$ such that $x \to \infty$ as $t \to \infty$, we have
\begin{align*}
    \E_{x}\Big[\exp (-
    \crochet{\cE_{\ell}, \phi(\cdot - c)})
    - \exp\Big(\!-\!\!\!\sum_{u \in \cN_{\ell}} \phi(W_{\ell}^{(u)}- m_t^{(d)}-c) \Big)\Big]  = o_u\big(e^{-\frac{1}2 L^{3}}\big)\,.
\end{align*}
\end{corollary}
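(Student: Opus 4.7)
The strategy is to invoke the pathwise coupling of Proposition~\ref{prop:klz21-coupling} to transfer the estimate from the branching Bessel process to the one-dimensional BBM, handling the good event $\cG_x$ via the $C_d\ell/x$ closeness and the complementary event via the tail bound supplied there. Write
\[
F := \exp\left(-\crochet{\cE_\ell,\phi(\cdot-c)}\right) - \exp\left(-\sum_{u \in \cN_\ell} \phi(W^{(u)}_\ell - m_t^{(d)} - c)\right),
\]
and split $\E_x[F] = \E_x[F\one_{\cG_x}] + \E_x[F\one_{\cG_x^c}]$. Since (as in the intended application to Proposition~\ref{prop:tailEstimate}) we may assume $\phi \geq 0$, both exponentials lie in $[0,1]$ and $|F| \leq 2$, so Proposition~\ref{prop:klz21-coupling} yields
\[
|\E_x[F\one_{\cG_x^c}]| \leq 2\P_x(\cG_x^c) \leq 2(2+e^\ell)e^{-x^2/(8\ell)}.
\]
Once $L$ is fixed, $\ell \leq L^{1/6}$ is fixed as well, while $x \to \infty$ as $t \to \infty$; the right-hand side therefore decays to $0$ faster than any polynomial in $x$, so $L^{5/2}$ times this bound has $\limsup_L \limsup_t = 0$.

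On $\cG_x$, Proposition~\ref{prop:klz21-coupling} provides $\sup_{u \in \cN_\ell}|R^{(u)}_\ell - W^{(u)}_\ell| \leq C_d\ell/x$. Applying $|e^{-a}-e^{-b}| \leq |a-b|$ for $a,b \geq 0$ and writing $\omega_\phi$ for the modulus of continuity of $\phi$, we obtain
\[
|F|\one_{\cG_x} \leq \sum_{u \in \cN_\ell} \left|\phi(R^{(u)}_\ell - m_t^{(d)} - c) - \phi(W^{(u)}_\ell - m_t^{(d)} - c)\right| \leq |\cN_\ell|\,\omega_\phi(C_d\ell/x).
\]
Taking expectations and using $\E[|\cN_\ell|] = e^\ell$ gives $|\E_x[F\one_{\cG_x}]| \leq e^\ell\,\omega_\phi(C_d\ell/x)$. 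For fixed $L$, the factor $e^\ell L^{5/2}$ is a constant and $\omega_\phi(C_d\ell/x) \to 0$ as $t \to \infty$ by uniform continuity of $\phi$; hence again $\limsup_L\limsup_t (L^{5/2} \cdot \text{bound}) = 0$.

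Summing the two halves gives $\E_x[F] = o_u(L^{-5/2})$, as required. The uniformity in $x = x(t)$ is automatic because both error bounds are monotone decreasing in $x$, so the argument works for any such divergent $x$. The only potentially delicate point is the $e^\ell$ prefactor in the good-event estimate, but freezing $L$ before letting $t \to \infty$ makes $e^\ell$ harmless while $x$ continues to diverge; no real obstacle arises, since the two key inputs---the tail bound on $\P_x(\cG_x^c)$ and the pathwise $C_d\ell/x$ closeness on $\cG_x$---are supplied directly by Proposition~\ref{prop:klz21-coupling}.
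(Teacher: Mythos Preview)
Your proof is correct and follows the same overall strategy as the paper: split on $\cG_x$ and $\cG_x^c$, use the tail bound from Proposition~\ref{prop:klz21-coupling} on the bad event, and use the pathwise closeness $C_d\ell/x$ together with uniform continuity of $\phi$ on the good event.

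The one place where you diverge is a small simplification on the good event. The paper introduces an additional truncation $\{|\cN_\ell|\leq e^{L^3}\}$, controls its complement by Markov's inequality, and then on the doubly good event bounds the \emph{ratio} of the two exponentials, obtaining an error of the form $1-e^{-\epsilon_t e^{L^3}}$. You instead apply the Lipschitz inequality $|e^{-a}-e^{-b}|\leq |a-b|$ for $a,b\geq 0$ directly and take expectations, which replaces the truncation step by the single identity $\E[|\cN_\ell|]=e^\ell$. This is cleaner and avoids carrying the somewhat awkward $e^{L^3}$ factor, at the cost of requiring the Lipschitz bound (which in turn needs $\phi\geq 0$ so that the exponents are nonnegative). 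You flag that assumption explicitly; the paper uses it implicitly (via $|F|\leq 2$) and it is harmless since the only application is to the nonnegative $\phi$ of Proposition~\ref{prop:tailEstimate}.
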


\begin{remark}
The uniform continuity assumption on $\phi$ in Corollary~\ref{cor:coupling} may be removed if the range of $x$ is taken to be any ball around  $m_t^{(d)}$ with radius given by a function of $L$.
\end{remark}

\begin{remark}
Corollary~\ref{cor:coupling} complements~\cite[Corollary~6.3]{KLZ21}, which may be thought of as Corollary~\ref{cor:coupling} with $\phi(a) := \infty \ind{a >0}$.
\end{remark}

\begin{proof}[Proof of Corollary~\ref{cor:coupling}]
Fix $L>0$, and recall that $t\to\infty$ independently of $L$.
Since $\phi$ is uniformly continuous, setting
\[
\ep_t := \sup_{a \in \R} \sup_{x \in [0,C_d \ell/t]} \abs{ \phi(a + x) - \phi(a)}\,.
\]
we have $\epsilon_t \to 0$ as $t \to \infty$.
Further, by the Markov inequality,
\[
  \P( |\cN_{\ell} | > \exp(L^3)) \leq \exp(\ell - L^3)= o(e^{-\frac{1}2 L^{3}} ).
\]
Let
\[
f(\ell,t) := \exp\Big(\!-\!\!\!\sum_{u \in \cN_{\ell}} \phi(W_{\ell}^{(u)}- m_t^{(d)}-c) \Big) \,.
\]
On the event $\cG_x \cap \{ |\cN_{\ell}| \leq \exp(L^3) \}$, we have
\begin{align*}
    \exp \big(-
    \crochet{\cE_{\ell}, \phi(\cdot - c)} \big)  \in f(\ell, t) \cdot [\exp(-\ep_t e^{L^3} ), \exp(\ep_t e^{L^3} )] \,.
\end{align*}
The above implies that
\begin{align*}
    &\bigg|\E_x\Big[ \exp \big(\!-
    \crochet{\cE_{\ell}, \phi(\cdot - c)}\big) - f(\ell,t) \Big] \bigg|\\
    &\leq
    \Big( e^{\ep_t e^{L^3}} -1 \Big)\E_x\big[  f(\ell,t) \big] + 2\Big(\P(\cG_x^c) + \P( |\cN_{\ell}| > e^{L^3})\Big)\,.
\end{align*}
Here, we have used the fact that the quantity inside the expectation on the
left hand side is bounded in absolute value by $2$. In light of $|f(\ell,t)| \leq 1$ and the bound on $\P(\cG_x^c)$ from Proposition~\ref{prop:klz21-coupling}, we find 
\[
\limsup_{t \to \infty}  \bigg|\E_x\Big[ \exp \big(\!-
    \crochet{\cE_{\ell}, \phi(\cdot - c)}\big) - f(\ell,t) \Big] \bigg|
    \leq 2\P(|\cN_{\ell}| > e^{L^3})  = o\big(e^{-\frac{1}2 L^{3}} \big)\,.
\]
This concludes the proof.
\end{proof}

\subsection{Proof of Proposition~\ref{prop:tailEstimate}}
We are now ready to prove Proposition~\ref{prop:tailEstimate}.

\begin{proof}[Proof of Proposition~\ref{prop:tailEstimate}]
Define $y$ to be the infimum of the (compact) support of $\phi$ in the second coordinate, that is,
\[
y:=
\inf \{ z \in \R :  \sup_{\theta \in \S^{d-1}} \phi(\theta,z) >0  \}\,.
\]
Define the function $\bar\phi: \S^{d-1} \times \R \to \R_{\geq 0}$ as
\[
\bar{\phi}(\sigma, x) :=\phi(\sigma, x +y)\,,
\]
so that $\bar\phi$ is supported on $\S^{d-1} \times \R_{\geq 0}$. We seek the leading-order asymptotics of
\begin{align*}
    \Upsilon := \E_{\theta (\sqrt{2}L -z )}\Big[ 1- \exp \Big( -\sum_{v \in \cN_{\tll}} \sum_{ u \in \cN_{\ell}^v} \bar\phi \big( \theta_{\tl}^{(u)}, R_{\tl}^{(u)} - m_t^{(d)} -y \big)  \Big) \Big] \,.
\end{align*}
Note that only particles $u \in \cN_{\tl}$ such that $R_{\tl}^{(u)} \geq m_t^{(d)} +y$ contribute to the exponential, since $\bar{\phi}_{\sigma}(x) := \bar\phi(\sigma,x)$ is only supported on $x \geq 0$; thus, we may add the indicator $\fT(v)$ (defined in \eqref{def:T-tailevent}) to the sum in the exponential without change.
Furthermore, Proposition~\ref{prop:sector-result} states that
\[
    \inf_{\theta \in \sd} \P_{\theta(\sqrt{2}L-z)} \bigg( \bigcap_{\{u \in \cN_{\tl}\,,~R_{\tl}^{(u)} > m_t^{(d)} + y\}} \Big\{ \|\theta - \theta_{\tl}^{(u)}\| < L^{-1/12} \Big\} \bigg) > 1- e^{-L^{5/6}}
\]
Define the function
\[
\psi_{\theta}(v) := \psi_{\theta, y, \ell, L,t}(v) = \sum_{u \in \cN_{\ell}^v} \bar\phi_{\theta}(R_{\tl}^{(u)} - m_t^{(d)}- y) \,.
\]
Then the above gives the following expression
\begin{align*}
    \Upsilon &= (1+o_u(1))\E_{\theta (\sqrt{2}L -z )}\Big[ 1- \exp \Big(\! - \!\!\!\sum_{v \in \cN_{\tll}} \psi_{\theta}(v) \indset{\fT(v)} \Big)  \Big]
    + o_u(\fM_{L,z})\,,
\end{align*}
where the $o_u(\cdot)$'s hold uniformly in $\theta$ and we used that
$\fM_{L,z}\geq e^{-cL^{2/3}}\gg e^{-L^{5/6}}$. Now,
\begin{align*}
    &
    \E_{\theta (\sqrt{2}L -z )}\Big| \exp \Big(\! - \!\!\!\sum_{v \in \cN_{\tll}} \psi_{\theta}(v) \indset{\fT(v)} \Big) - \exp \Big(\! - \!\!\!\sum_{v \in \cN_{\tll}} \psi_{\theta}(v) \indset{G_{L,t}(v) \cap \fT(v)} \Big) \Big|  \nonumber \\
    &\leq 2 \P_{\theta (\sqrt{2}L -z )} \Big( \cup_{v\in \cN_{\tll}} G_{L,t}(v)^c \cap \fT(v) \Big) \\
    &\leq 2\E_{\theta (\sqrt{2}L -z )}\Big[ \sum_{v \in \cN_{\tll}} \indset{F_{L,t}(v)^c} + \indset{F_{L,t}(v)\setminus G_{L,t}(v)} \indset{\fT(v)}\Big]
    =o_u(\fM_{L,z})\,,
\end{align*}
where the last step follows from Proposition~\ref{prop:klz21-1stmom-results}.
Thus, we have
\begin{align}
\Upsilon = (1+o_u(1)) \hat{\Upsilon} + o_u(\fM_{L,z}) \,,
\label{eqn:tailasymp-insertedbarrier}
\end{align}
where
\begin{align*}
    \hat{\Upsilon} :=
     \E_{\theta (\sqrt{2}L -z )}\Big[ 1- \exp \Big(\! - \!\!\!\sum_{v \in \cN_{\tll}} \psi_{\theta}(v) \indset{G_{L,t}(v) \cap \fT(v)} \Big)  \Big)  \Big] \,.
\end{align*}

Let $\fE_{\tll}$ denote the event that $\ind{G_{L,t}(v)\cap \fT(v)} = 1$ for at most one $v\in \cN_{\tll}$. Using the identity $1-\exp(\sum_{i=1}^n x_i) = \sum_{i=1}^n (1- e^{x_i})$ when at most one of the $x_i$ is nonzero, as well as the trivial bound $e^{-x} \leq 1$, we find
\begin{align*}
    \hat{\Upsilon} &\leq
    \E_{\theta (\sqrt{2}L -z )}\Big[ \sum_{v \in \cN_{\tll}} \big( 1- e^{-\psi_{\theta}(v) \indset{G_{L,t}(v) \cap \fT(v)}} \big) ; \fE \Big] + \P_{\theta(\sqrt{2}L-z)}(\fE_{\tll}^c) \nonumber \\
    &\leq \E_{\theta (\sqrt{2}L -z )}\Big[ \sum_{v \in \cN_{\tll}} \big( 1- e^{-\psi_{\theta}(v) \indset{G_{L,t}(v) \cap \fT(v)}} \big] + o_u(\fM_{L,z})
\end{align*}
(the last line follows from Proposition~\ref{prop:klz21-2ndmom}), as well as
\begin{align*}
    &\E_{\theta (\sqrt{2}L -z )}\Big[ \sum_{v \in \cN_{\tll}} \big( 1- e^{-\psi_{\theta}(v) \indset{G_{L,t}(v) \cap \fT(v)}} \big) \Big] \nonumber \\
    &\leq \hat{\Upsilon} +
    \E_{\theta (\sqrt{2}L -z )}\Big[ \sum_{v \in \cN_{\tll}} \big( 1- e^{-\psi_{\theta}(v) \indset{G_{L,t}(v) \cap \fT(v)}} \big) ; \fE_{\tll}^c \Big] \nonumber \\
    &\leq \hat{\Upsilon} + 2\E_{\theta (\sqrt{2}L -z )}\Big[ \sum_{v \neq w \in \cN_{\tll}} \ind{G_{L,t}(v) \cap \fT(v) \cap G_{L,t}(w) \cap \fT(w)} \Big] \nonumber
    = \hat{\Upsilon} + o_u(\fM_{L,z})
\end{align*}
(again, the last equality follows from Proposition~\ref{prop:klz21-2ndmom}).
Equation~\eqref{eqn:tailasymp-insertedbarrier} and the above inequalities then yield
\begin{align*}
    \Upsilon = (1+o_u(1))\E_{\theta (\sqrt{2}L -z )}\Big[ \sum_{v \in \cN_{\tll}} \big( 1- e^{-\psi_{\theta}(v) \indset{G_{L,t}(v) \cap \fT(v)}} \big) \Big] + o_u(\fM_{L,z})\,.
\end{align*}
Note that the quantity inside of the expectation depends only on the \textit{norms} of our BBM particles. Further, since
$\bar\phi_{\theta}$ is supported on $\R_{\geq 0}$, it follows that $\psi_{\theta}(v)$ is nonzero only on the event $\fT(v)$.
Thus, the proof of Proposition~\ref{prop:tailEstimate} will be complete if we can show the following asymptotic, uniformly over $\theta$:
\begin{align}
    \E_{\sqrt{2}L -z}\Big[ \sum_{v \in \cN_{\tll}} \big( 1- e^{-\psi_{\theta}(v) \indset{G_{L,t}(v)}} \big) \Big] \usim C_d(\phi_{\theta}) \fM_{L,z}
    \label{eqn:tailasymp-reduction}
\end{align}
With the identity $1-e^{x \indset{A}} = (1-e^x)\indset A$ and the many-to-one lemma (Lemma~\ref{lem:many-to-one}), the left-hand side of~\eqref{eqn:tailasymp-reduction} simplifies as
\begin{align*}
    &e^{\tll} \E_{\sqrt{2}L-z}\Big[ \big( 1- e^{-\psi_{\theta}(v)} \big) \indset{G_{L,t}(v)} \Big] \,.
\end{align*}
We can  expand this expectation by applying the Girsanov transform (given by~\cite[Equation~2.7]{KLZ21}) to convert the Bessel process $(R_s^{(v)})_{[0,\tll]}$
to a Brownian motion $(W_s^{(v)})_{[0,\tll]}$, and then integrating over the (Brownian) transition density $p^{W_.^{(v)}}_{\tll}$. Letting $\x(z):= \sqrt{2}L-z$, this gives the previous display as
\begin{align*}
    &e^{\tll} \E_{\sqrt{2}L-z} \left[
    \left(\frac{{W_{\tl-\ell}^{(v)}}}{\x(z)} \right)^{\alpha_d}
    \exp \left( \int_0^{\tl-\ell} \frac{\alpha_d- \alpha_d^2}{{W_s^{(v)}}^2}\d s \right) \big( 1- e^{-\psi_{\theta}(v)} \big) \indset{G_{L,t}(v)} \right]
    \\
    &\usim
    e^{\tl - \ell}
    \int_{\ell^{1/3}}^{\ell^{2/3}}
    p_{\tl-\ell}^{W^{(v)}} \big (\x(z), \y(w) \big )
    \bigg ( \frac{ \y(w)}{\x(z)} \bigg )^{\alpha_d}
    \P_{\x(z),\tll}^{\y(w)} \left( \B_{[0,\tll]}({W_{\cdot}^{(v)}}) \right)
    \E_{\y(w)}[\Psi_{\theta,\ell}]\d w \,,
\end{align*}
where
\[
\Psi_{\theta,\ell} := 1- \exp (\langle \cE_{\ell}, \bar{\phi}_{\theta}(\cdot - y) \rangle) \,.
\]
In the last line, we have used the fact that on the event $G_{L,t}(v)$ (or, more specifically, on $\B_{[0,\tll]}({W_{\cdot}^{(v)}})$), we have
\[\exp \bigg( \int_0^{\tl-\ell} \frac{\alpha_d- \alpha_d^2}{{W_s^{(v)}}^2}\d s \bigg) \usim 1 \,.\]
We have also used the Markov property at time $\tll$. Now, from equations $(6.53), (6.54)$ and $(6.60)$ of~\cite{KLZ21}, the last display is asymptotically equivalent (in the sense of $\usim$) to
\begin{align*}
    &\sqrt{\frac{2^{1+\alpha_d}}{\pi}} e^{-y\sqrt{2}}\fM_{L,z}
    \int_{\ell^{1/3}}^{\ell^{2/3}} w e^{w \sqrt{2}} \E_{\y(w)}[ \Psi_{\theta, \ell}] \d w \,.
\end{align*}
Corollary~\ref{cor:coupling} and Proposition~\ref{prop:abk-tailconvergence} tell us that
\begin{align*}
    \int_{\ell^{1/3}}^{\ell^{2/3}} w e^{w \sqrt{2}} \E_{\y(w)}[ \Psi_{\theta, \ell}] \d w \usim e^{\sqrt{2}y} C(\phi_{\theta})\,,
\end{align*}
where $C(\phi_{\theta})$ is defined in~\eqref{def:C(phi)}. This concludes the proof.
\end{proof}

\section*{Acknowledgments}
We thank two anonymous referees for their comments, which led to a significant
improvement. In particular, Proposition 1.3 was added in response to their comments.
Y.H.K.\ was supported by the NSF Graduate Research Fellowship 1839302.
E.L.\ was supported by NSF grants DMS-1812095 and DMS-2054833 and by US-BSF grant 2018088.
B.M. is partially supported by the ANR grant MALIN (ANR-16-CE93-0003).
O.Z.\ was partially supported by  US-BSF grant 2018088
 and by the European Research Council (ERC) under the European Union's Horizon 2020 research and innovation programme (grant agreement No.~692452).
This material is based upon work supported by the NSF under Grant No. DMS-1928930 while Y.H.K.\ and O.Z.\ participated in a program hosted by MSRI in Berkeley, California, during the Fall 2021 semester.

\bibliographystyle{imsart-number}
\bibliography{bbmpp}

\end{document}